\documentclass[11pt,a4paper,reqno]{article}
\usepackage{amsmath, amssymb, amsthm}
\usepackage{fullpage}
\usepackage{hyperref}
\usepackage{pst-node}
\usepackage{color}

\usepackage[all]{xy}
\usepackage{tikz-cd}
\tikzcdset{arrow style=tikz, diagrams={>=stealth}}


 %
\numberwithin{equation}{section}

\theoremstyle{plain}
\newtheorem{thm}{Theorem}[section]
\newtheorem{lem}[thm]{Lemma}
\newtheorem{prop}[thm]{Proposition}
 \newtheorem{cor}[thm]{Corollary}
\newtheorem{defi}[thm]{Definition}

\theoremstyle{remark}

\newtheorem{rem}[thm]{Remark}

\newcommand{\ot}{{\otimes}}
\newcommand{\lan}{{\langle}}
\newcommand{\ran}{{\rangle}}
\newcommand{\VL}{{{}^{\vee}\Lambda}}
\newcommand{\LV}{{\cL^{\vee}}}
\newcommand{\UH}{\underline{H}}

\DeclareMathOperator{\Aut}{Aut}
\DeclareMathOperator{\Hom}{Hom}
\newcommand*{\id}{\textup{id}}

\newcommand{\zero}[1]{{#1}{}_{\scriptscriptstyle{(0)}}}
\newcommand{\one}[1]{{#1}{}_{\scriptscriptstyle{(1)}}}
\newcommand{\two}[1]{{#1}{}_{\scriptscriptstyle{(2)}}}
\newcommand{\three}[1]{{#1}{}_{\scriptscriptstyle{(3)}}}
\newcommand{\four}[1]{{#1}{}_{\scriptscriptstyle{(4)}}}
\newcommand{\five}[1]{{#1}{}_{\scriptscriptstyle{(5)}}}
\newcommand{\six}[1]{{#1}{}_{\scriptscriptstyle{(6)}}}

%

%
\newcommand{\tuno}[1]{{#1}{}{}^{\scriptscriptstyle{<1>}}}
\newcommand{\tdue}[1]{{#1}{}{}^{\scriptscriptstyle{<2>}}}

\newcommand{\eins}[1]{{#1}{}{}^{\scriptscriptstyle{(1)}}}
\newcommand{\zwei}[1]{{#1}{}{}^{\scriptscriptstyle{(2)}}}


\renewcommand{\o}{{}_{\scriptscriptstyle{(1)}}}
\renewcommand{\t}{{}_{\scriptscriptstyle{(2)}}}
\renewcommand{\th}{{}_{\scriptscriptstyle{(3)}}}
\newcommand{\fo}{{}_{\scriptscriptstyle{(4)}}}
\newcommand{\fiv}{{}_{\scriptscriptstyle{(5)}}}
\newcommand{\si}{{}_{\scriptscriptstyle{(6)}}}
\newcommand{\sev}{{}_{\scriptscriptstyle{(7)}}}
\newcommand{\eig}{{}_{\scriptscriptstyle{(8)}}}
\newcommand{\nin}{{}_{\scriptscriptstyle{(9)}}}
\newcommand{\ten}{{}_{\scriptscriptstyle{(10)}}}

\newcommand{\la}{{\triangleright}}
\newcommand{\ra}{{\triangleleft}}
\newcommand{\eps}{{\varepsilon}}
\DeclareMathOperator{\tens}{\otimes}

\newcommand{\C}{\mathbb{C}}
\newcommand{\tC}{\tilde{C}}

\newcommand{\tZ}{\tilde{Z}}

\newcommand{\B}{\mathcal{B}}
\newcommand{\CB}{\mathcal{B}}
\newcommand{\cL}{\mathcal{L}}
\newcommand{\CL}{\mathcal{L}}
\newcommand{\CG}{\mathcal{G}}
\newcommand{\CR}{\mathcal{R}}
\renewcommand{\H}{\mathcal{H}}
\newcommand{\CH}{\mathcal{H}}
\newcommand{\CO}{\mathcal{O}}

\newcommand{\CM}{\mathcal{M}}
\newcommand{\Q}{\mathcal{Q}}
\newcommand{\R}{\mathcal{R}}

\newcommand{\DYH}{{\ }^{{}^\times}\kern-12pt\CM^H_H}

\newcommand{\adr}{{\textup{Ad}_{R}}}
\newcommand{\ad}{{\textup{ad}}}
\newcommand{\Ad}{{\textup{Ad}}}
\newcommand{\udh}{{\underline{H}}}

\newcommand{\<}{\langle}
\renewcommand{\>}{\rangle}
 
\allowdisplaybreaks

\title{{\bf Bisections and cocycles on Hopf algebroids}}

\author{Xiao Han${}^*$ and Shahn Majid\footnote{x.han@qmul.ac.uk, s.majid@qmul.ac.uk}
\\ \  \\
School of Mathematical Sciences, Queen Mary University of London\\ London E1 4NS, UK}

\begin{document}

\date{}

\maketitle
\parskip = .75 ex


\begin{quote} {\bf Abstract} We introduce and study the group  $\CB(\CL)$ of bisections of a Hopf algebroid $\CL$ and show that they form a group crossed module or 2-group with the group $\Aut(\CL)$ of automorphisms. Moreover, the group of vertical bisections turns out to be part of a certain non-Abelian cohomology of which $\CH^2(\CL,B)$ governs cotwisting of a Hopf algebroid with base $B$.  For the Ehresmann-Schauenburg Hopf algebroid $\CL(P,H)$ of a quantum principal bundle or Hopf-Galois extension, $\CB(\CL(P,H))$  reduces to the group $\Aut_H(P)$ of bundle automorphisms and vertical bisections to the group of `gauge transformations' of the bundle. The general  $\CH^2(\CL(P,H),B)$ reduces to a known non-Abelian cohomology in the case where $P$ is a trivial principal bundle or cleft extension. Parallel characterisations are obtained for the bisections  and non-Abelian cohomology of the action Hopf algebroid $B\# H^{op}$ associated to a braided-commutative algebra $B$ in the category of Drinfeld-Yetter modules over a Hopf algebra $H$. Examples include the Heisenberg double or Weyl Hopf algebroid  of a Hopf algebra and a canonical action Hopf algebroid $\underline H\# H^{op}$ when $H$ is coquasitriangular and $\underline{H}$ is its transmutation. \end{quote}

\begin{quote}  2010 Mathematics Subject Classification: 16T05,  20G42, 46L87\end{quote}
\section{\bf Introduction}

Bialgebroids and Hopf algebroids are a natural generalisation of Hopf algebras or quantum groups in the same way that a groupoid generalises a group. The axioms for a (say, left) bialgebroid $\CL$ are by now quite stable \cite{Boehm} and include that $\CL$ is a bimodule over a base `coordinate algebra' $B$ with source and target algebra maps $B,B^{op}\to \CL$ and with $\CL$ a coalgebra in the category of $B$-bimodules. The point of view here is that of noncommutative geometry in that $B$ could be noncommutative or `quantum' and arrows get reversed compared to their classical counterparts in geometry. In mathematical physics, there would typically be a deformation parameter controlling the noncommutativity, but we do not suppose this. For a Hopf algebroid, there are various notions, and here we adopt the weakest one that a left bialgebroid is a left or anti-left Hopf algebroid if certain left or anti-left `Hopf-Galois' maps are bijective. There are also more restrictive notions of Hopf algebroid involving an `antipode' map $S:\CL\to \CL$ but without consensus as to the axioms (notably, assuming that $S$ is an antialgebra map as in  \cite{Boehm} is too restrictive to include important examples, but can be weakened \cite{HM22}).   

The aim of the present work is to move from the axioms of  bialgebroids and Hopf algebroids towards applications,  in our case introducing a notion of `bisections' $\sigma: \CL\to B$ in Definition~\ref{def. left bisection of Hopf algebroids}. We show in Theorem~\ref{thm. group of left bisections} that these form a group $\CB(\CL)$ when $\CL$ is a left Hopf algebroid. Classically, for groupoids, bisections are maps from the base to the total space of the groupoid with certain natural properties \cite{Mac} as recalled in Section~\ref{sec:preclass}; in our algebraic form, the maps now go the other way and the corresponding algebras are allowed to be noncommutative. A main motivation  comes from the theory of principal bundles $P\to M$ with structure Lie group $G$ over a base manifold $M$. The relevant groupoid is the  Ehresmann groupoid $(P\times P)/G$ providing a receptacle for parallel transport (every connection on the principal bundle induces a groupoid map from the path groupoid of $M$ to $(P\times P)/G$ by parallel transport along paths). Its  bisections encode bundle automorphisms of $P$ as a principal bundle and its `vertical bisections' (that are the identity on the base) encode gauge transformations\cite{Mac}. In noncommutative geometry,  `quantum' principal bundles a.k.a. Hopf-Galois extensions have been widely studied and consist (now in a `coordinate algebra' form) of an algebra $P$ on which a Hopf algebra $H$ coacts with fixed subalgebra the base $B$, see  \cite{BrzMa, mont, BM} among many other works. The Ehresmann-Schauenburg Hopf algebroid $\CL(P,H)$ in  \cite{schau} provides the noncommutative version of the Ehresmann groupoid and it is known  \cite{HL20} that the group $\Aut_H(P)$ of invertible $H$-colinear $P$-algebra maps can be expressed in terms this, albeit without a general theory of bisections of bialgebroids or Hopf algebroids as such. We provide the latter and show in Section~\ref{sec:ESbisec} that our notions indeed reduce correctly in this case.  Whereas quantum principal bundles or Hopf-Galois extensions have received a lot of attention, their application to `gauge theory' along the lines in mathematical physics requires first and foremost a better understanding of gauge transformations, which are now expressed in Hopf algebroid terms as vertical bisections. One can also expect that the Hopf algebroid of differential operators  \cite{Xu,Gho} can play a role akin to the path groupoid, hence providing a route to a notion of parallel transport for noncommutative geometry. This remains open, but behaving well with respect to gauge transformations should be a key property just as it is for parallel transport and monodromy in classical geometry. 

Another justification for our definition of $\CB(\CL)$ is that classically \cite[p.~14]{MeZhu} the group of bisections of a groupoid and its group of automorphisms form a group crossed module in the sense of Whitehead. The latter are a concrete way to think about 2-groups defined as 2-categories with one object and all morphisms and 2-morphisms invertible\cite{Beh}. We now find the same for any left Hopf algebroid, constructing the required group homomorphism $\CB(\CL)\to \Aut(\CL)$ in  Proposition~\ref{equ. left adjoint automorphism}.  Next, 
motivated again by the $\CL(P,H)$ case, we  introduce a notion of `extended bisections'  where one of the axioms of $\CB(\CL)$  is replaced by requiring $\sigma$ to be convolution-invertible. The vertical such extended bisections then reduce in the case of $\CL(P,H)$ to the more general notion of gauge transformation for quantum principal bundles in  \cite{Ma:dia}, where these are only required to be convolution-invertible. The issue here is that in noncommutative geometry, there can be too few algebra maps so that $\Aut_H(P)$ and its vertical part are often too small for a viable gauge theory, necessitating a more general notion. 

What we find at this point is a remarkable connection between bisections and 2-cocycles $Z^2(\CL,B)$ used in the theory of (co)twisting of Hopf algebras and Hopf algebroids. The latter goes back to Drinfeld \cite{Dri}, who noted that the property of being a  quasi-Hopf algebra was stable under conjugation of the coproduct by a 2-cochain. A corollary \cite{Ma:book} of Drinfeld's work was that the property of being a  Hopf algebra is similarly stable if the 2-cochain obeys a further `non-Abelian' 2-cocycle condition. There is also a notion of 1-cochains and their coboundary, leading to a non-Abelian cohomology for such twisting of Hopf algebras up to equivalence. Building on the theory of twisting and, dually, cotwisting, of bialgebroids and Hopf algebroids\cite{Boehm,HM22}, Section~\ref{seccoh} now uses extended bisections to similarly fill in this picture. For a left bialgebroid $\CL$, we interpret the group of extended vertical bisections as 1-cochains $C^1(\CL,B)$ and introduce a suitable coboundary such that the cocycles $Z^1(\CL,B)$ are exactly the monoid $\CB_{ver}(\CL)$ of vertical bisections. Moreover, any element of $C^1(\CL,B)$ transforms an element of $Z^2(\CL,B)$ to another, leading us in Theorem~\ref{thm: boundary map} to introduce a non-Abelian cohomology $\CH^2(\CL,B)$ as the equivalence classes under such transformations.  This result is unexpected because it puts into the same setting very different kinds of objects: bisections with a classical meaning as above (including gauge transformations) and (co)twisting coming out of quantum group theory. The equivalence of (co)module categories under (co)twist is a kind of `gauge transform' in some abstract sense and suggests that gauge transformations in classical geometry are part of a  higher notion of gauge theory that remains to be understood. We show in the case of $\CL(P,H)$ that this non-Abelian cohomology reduces to corresponding $\tilde C^1(H,P)$, $\tilde Z^1(H,P)$ and $\tilde Z^2(H,P)$ spaces of maps from tensor powers of $H$ to $P$. The 2-cocycle case is done under a simplifying assumption that $P$ is `$\ra$-commutative' with respect to some right action $\ra$ of $H$ on $P$ and  $\CH^2(\CL(P,H),B)$ then reduces to $\tilde \CH^2(H,P)$ in  Theorem~\ref{thm:ESZ2}.  Meanwhile, Proposition~\ref{prop. BEH} checks that $\CH^2(\CL(P,H),B)$ similarly reduces when $P$ is a trivial quantum principal bundle or cleft Hopf-Galois extension to a cohomology $\CH^2_{\la,as}(H,B)$ which governs these. The 2-cocycles $Z^2_{\la,as}(H,P)$ here were studied in  \cite{HM22} although the cohomology mainly studied there was slightly different as $\la$ was also allowed to be transformed. 

The remaining main results are in Section~\ref{secact}, where we see how  our general constructions play out for another important class of Hopf algebroids, which we refer to as {\em action Hopf algebroids}  $B\# H^{op}$. These were introduced by J-H. Lu in  \cite{Lu} and we will use them in an equivalent version with a left-bialgebroid built from a braided-commutative algebra $B$ in the category $\DYH$ of Drinfeld-Yetter modules of a Hopf algebra $H$. (This category is  also referred to as that of $H$-crossed modules \cite{Ma:book}, but we avoid such terminology in the present work due to conflict with group crossed modules.) An object here has both a right action $\ra$ and a right coaction, compatible in a certain way that, if $H$ is finite dimensional, amounts to the category of right modules of the Drinfeld quantum double $D(H)$ in  \cite{Dri}. That  $B\# H^{op}$ is a Hopf algebroid in a certain sense due to Lu was shown in \cite{BM} while for our purposes we check in Proposition~\ref{thm. left and anti-left Hopf algebroid on YD module} that it is a left and anti-left Hopf algebroid in the sense we need. This is in line with \cite{BS}, but now given more explicitly.  Propositions~\ref{thmZ} and~\ref{propZ} then show that the group of left bisections $\CB(B\#H^{op})$ can be identified concretely as a group $Z^1_\ra(H,B)$ of non-abelian 1-cocycles with respect to the action, 
\[ \varrho:H\to B,\quad \varrho(gh)=( \varrho(g)\ra h{}_{\scriptscriptstyle{(1)}})\varrho(h{}_{\scriptscriptstyle{(2)}})\]
for all $h,g\in H$, that are also invertible in a certain sense. Here, $h{}_{\scriptscriptstyle{(1)}}\tens h{}_{\scriptscriptstyle{(2)}}$ denotes the coproduct of $H$. Such 1-cocycles are of interest for any action of a Hopf algebra on an algebra but in general do not form a group under convolution when $B$ is not commutative. Our results imply that when $B$ is braided-commutative then they do form a group for a certain product. We also give a characterisation of $\Aut(B\# H^{op})$ to complete the crossed module structure. Finally, it is known that the action Hopf algebroid construction behaves well under a Drinfeld twist of both $H$ and $B$ \cite{BP} and Theorem~\ref{thm:actH2} shows that $\CH^2(B\# H^{op},B)$ reduces to a non-Abelian cohomology $H^2_\ra(H,B)$  for the cotwist version in  \cite{HM22}. 

An elementary but canonical example \cite{Lu} for finite-dimensional $H$  is the Weyl Hopf algebroid $H^*\# H^{op}$ or `Heisenberg double'. As an algebra, this is isomorphic \cite{Ma:book} to the linear maps ${\rm Lin}(H)$, while a similar construction in the classical case where $H^*$ is replaced by an algebraic group gives its Hopf algebroid of differential operators. Proposition~\ref{Zaut}  characterises $Z^1_\ra(H,H^*)$ in this case as the counital invertible elements of $H^*$, while Proposition~\ref{propweylZ2} characterises $Z^2_\ra(H,H^*)$. As a new (but related) source of examples, Section~\ref{secquasi} introduces an action Hopf algebroid associated to any coquasitriangular Hopf algebra $H$, where $B=\underline{H}$ is the right comodule transmutation of $H$ \cite{Ma:book}. The latter is a braided Hopf algebra in the category $\CM^H$ of $H$-comodules, which functorially embeds in $ \DYH$, but as such it is not braided-commutative. It was observed in  \cite{BegMa,Ma:hod}, however, that it {\em is} braided-commutative if we view it in $\DYH$ via a different action and the same coaction. We show that  $\underline H\# H^{op}$ in this case  is isomorphic to the Weyl Hopf algebroid if $H$ is finite-dimensional and factorisable in sense of  \cite{Dri,Ma:book}.   A concrete example of the construction is the Hopf algebroid  $B_q[SL_2]\# \CO_q[SL_2]^{op}$ for the standard quantum group $\CO_q[SL_2]$ and $B_q[SL_2]$  its transmutation as in  \cite{Ma:book}. 

The paper also includes right handed bisections $\CB^R(\CL)$ on left Hopf algebroids, where the theory is similar, and concludes in Section~\ref{secdual} with the dual versions of some of our main constructions, as these are also of interest.  The non-Abelian cohomology also has a $C^2(\CL,B)$ and $Z^3(\CL,B)$ level leading to a natural notion of coquasi-Hopf algebroids, omitted from an earlier version of the present work and to be presented elsewhere.  Another direction for further work could be quantum principal bundles or Hopf-Galois extensions in which the structure Hopf algebra is generalised to a Hopf algebroid. The cocycle actions of Hopf algebroids on other algebras in \cite{BB} could be viewed as cleft examples and the relevant 2-cocycles, while different from the present work, would be a possible point of contact.

\section{\bf Preliminaries}\label{secpre}

We start with some preliminaries on groupoids and Hopf algebroids, with emphasis in the former case on the role of bisections in gauge theory as a main motivation for the paper.

\subsection{Recap of groupoids and classical bisections} \label{sec:preclass}

We recall that a groupoid can be thought of concretely as $\CG=(\CG^0,\CG^1,s,t)$ where $\CG^0$ is the `base' and $\CG^1$ is the total (or `arrow') space, $s,t:\CG^1\to \CG^0$ are source and target maps and $\CG^1$ has a product between composable elements, i.e. where the target of the first element equals the source of the 2nd. We also require an identity section $\CG^0\to \CG^1$, an inversion operation on $\CG^1$, and some natural compatibilities between the various structures. Morphisms between groupoids are a pair of maps between the total and base spaces respecting these structures. More details are in \cite{Mac}. 

A \textit{bisection} on a classical groupoid $(\mathcal{G}^{0},  \mathcal{G}^{1})$ is a map $\sigma: \mathcal{G}^{0}\to \mathcal{G}^{1}$, such that
$s\circ \sigma=\id_{\mathcal{G}^{0}}$, and  $t\circ \sigma : \mathcal{G}^{0} \to \mathcal{G}^{0}$ is a diffeomorphism. The collection of  bisections,
denoted $\B(\mathcal{G})$, forms a group: given two  bisections $\sigma_1 $ and $\sigma_{2}$, their multiplication is defined by
\begin{align}
   ( \sigma_{1}\ast\sigma_{2})(x):=\sigma_{1} \big((t\circ \sigma_{2})(x)\big)\sigma_{2}(x), \qquad \mbox{for} \quad x\in \mathcal{G}^{0} .
\end{align}
The group identity is the identity section and the inverse is given by
\begin{align}
    \sigma^{-1}(x)= \Big(\sigma\big((t \circ\sigma)^{-1}(x)\big)\Big)^{-1}.
\end{align}
Here, $(t \circ\sigma)^{-1}$ as a diffeomorphism of $\mathcal{G}^{0}$. We refer to  \cite[Sec.~1.4]{Mac}  for more details. The subset $\B_{ver}(\CG)$ of `vertical' bisections, defined as those bisections where $t\circ \sigma=\id_{\CG^0}$, forms a normal subgroup of $\B(\CG)$.

One can see that $\B(\CG)$ and the set $\Aut(\CG)$ of groupoid automorphisms form a Whitehead group crossed module. We recall that this consists of  $(M, N, \mu, \la)$ for a pair of groups $M$, $N$, 
a group homomorphism $\mu: M\to N$ and a left group action $\la$ of $N$ on $M$ by group automorphisms such that
\[ \mu(n\la m)=n\mu(m) n^{-1},\quad \mu(m)\la (m')=mm'm^{-1},\]
for all  $n\in N, m, m'\in M$. A similar notion of crossed $G$-sets was introduced by Whitehead in  \cite{Whi} and such objects are of interest as concrete realisations of the abstract notion of a 2-group \cite{Beh}. In our case, the map $\mu: \B(\CG)\to \Aut(\CG)$ sends $\sigma$ to $\Ad_\sigma:g\mapsto \sigma(t(g))\,g\,(\sigma(s(g)))^{-1}$ for every $g\in \CG^{1}$. The action of an automorphism $\Phi$ on a bisection $\sigma$ is given by $\Phi\la \sigma(x)=\Phi\circ \sigma\circ \Phi^{-1}|_{\CG^{0}}(x)$, for $x\in \CG^{0}$. That bisections form a group crossed module was remarked in  \cite{MeZhu}.

In particular, given a classical principal bundle $P$ with structure group $G$ and base manifold $M$, the Ehresmann groupoid $\CG$ has $\CG^1=\frac{P\times P}{G}$, the set of equivalence classes in $P\times P$ under the diagonal $G$-action, and $\CG^0=M$. We denote these classes by $[p,q]$ for any $p,q\in P$. Then $\CB(\CG)$  is known  \cite{Mac} to be isomorphic to the group of  ($G$-equivariant) principal bundle automorphisms,
\begin{equation}
\mathrm{Aut}_{G}(P) :=\{\varphi:P\to P \, | \,\  \varphi (pg) = \varphi(p)g,\quad \forall p\in P,\ g\in G  \} \, ,
\end{equation}
 while $\B_{ver}(\CG)$ is isomorphic to
the subgroup of gauge transformations, i.e., principal bundle
automorphisms which are vertical (in the sense that they project to the identity on the base):
\begin{equation}\label{AVclass}
\mathrm{Aut}_{P/G}(P) :=\{\varphi:P\to P \, | \,\, \varphi (pg) = \varphi(p)g  \, , \, \, \pi( \varphi(p)) = \pi(p), \quad \forall p\in P,\ g\in G  \} .
\end{equation}

The first isomorphism here can be constructed as follows. Given $\varphi\in \mathrm{Aut}_{G}(P)$, we can define a left bisection $\sigma_{\varphi}(m)=[\varphi(p), p]$, where $p\in \pi^{-1}(m)$, and $m\in M$. Conversely, let $\sigma$ be a left bisection, $m$ an arbritary point on $M$ and $p\in\pi^{-1}(m)$. Writing $\sigma(m)=[q, p]$, we set $\varphi_{\sigma}(p)=q$. It is not hard to see that $\varphi_{\sigma}$ is $G$-equivariant and that these two constructions are inverse to each other.  In this context, for a given gauge transformation $\varphi$, the map $\mu$ for the crossed-module structure is $\mu(\sigma_{\varphi})=\Ad_{\sigma_{\varphi}}:[p,p']\mapsto [\varphi(p),\varphi(p')]$ for every $[p,p']\in \frac{P\times P}{G}$, as the resulting element of $\Aut(\CG)$. 

We will also be interested in action groupoids. Given a smooth left action of a Lie group $G$ on a manifold $M$, the associated action Lie groupoid has $\CG^1=G\times M$ and $\CG^0=M$. The source and target maps are 
given by $s(g, m)=m$, $t(g,m )=gm$, for any $m\in M$ and $g\in G$. The identity section is $m\mapsto(1, m)$ and the product is given by $(g, m)(h, n)=(gh, n)$, defined if and only if $m=hn$, where $g,h\in G$ and $m,n\in M$. The inverse is given by $(g, m)^{-1}=(g^{-1}, gm)$. Elements of $\CB(\CG)$ are given by functions $\varrho:M\to G$ such that $\psi:M\to M$ given by $\psi(m):=\varrho(m)m$ is invertible (and the identity for a vertical bisection). See   \cite{Mac} for more details.

\subsection{Recap of Hopf algebroids}

Let $B$ be a unital algebra. A {\em $B$-ring} is a triple $(A,\mu,\eta)$ where $A$ is a $B$-bimodule and 
$\mu:A\ot_ {B} A \to A$, $\eta:B\to A$ are $B$-bimodule maps satisfying the associativity and unit conditions
\begin{equation}
\mu\circ(\mu\ot _{B}  \id_A)=\mu\circ (\id_A \ot _{B} \mu),
\quad
\mu\circ(\eta \ot _{B} \id_A)=\id_A=\mu\circ (\id_A\ot _{B} \eta).
\end{equation}
A morphism of $B$-rings $f:(A,\mu,\eta)\to (A',\mu',\eta')$ is a
$B$-bimodule map $f:A \to A'$ such that
$f\circ \mu=\mu'\circ(f\ot_{B} f)$ and $f\circ \eta=\eta'$.

From  \cite[Lem.~2.2]{Boehm}, there is a bijective correspondence between $B$-rings $(A,\mu,\eta)$ and algebras $A$ equipped with algebra maps $\eta : B \to A$. Starting with a $B$-ring $(A,\mu,\eta)$, one obtains a multiplication map $A \ot A \to A$ by composing the canonical surjection $A \ot A \to A\ot_B A$ with the map $\mu$. Conversely, starting with an algebra and an algebra map $\eta : B \to A$, a $B$-bilinear associative multiplication $\mu:A\ot_ {B} A \to A$ is obtained from the universality of the coequaliser $A \ot A \to A\ot_B A$ which identifies an element $ a b \ot a'$ with $ a \ot b a'$. Unless specified otherwise, algebra maps between unital algebras are required to respect both the products and the units.

Dually,  given a unital algebra $B$, a {\em $B$-coring} is a
triple $(C,\Delta,\varepsilon)$. Here $C$ is a $B$-bimodule with $B$-bimodule maps
$\Delta:C\to C\ot_{B} C$ and $\varepsilon: C\to B$ satisfying the
coassociativity and counit conditions
\begin{align}
(\Delta\ot _{B} \id_C)\circ \Delta = (\id_C \ot _{B} \Delta)\circ \Delta, \quad
(\varepsilon \ot _{B} \id_C)\circ \Delta = \id_C =(\id_C \ot _{B} \varepsilon)\circ \Delta.
\end{align}
A morphism of $B$-corings $f:(C,\Delta,\varepsilon)\to
(C',\Delta',\varepsilon')$ is a $B$-bimodule map $f:C \to C'$, such that
$\Delta'\circ f=(f\ot_{B} f)\circ \Delta$ and
$\varepsilon' \circ f =\varepsilon$.

Now suppose that $s:B\to \cL$ and $t:B^{op}\to \cL$ are algebra maps with images that commute. Then $\eta(b\tens c)=s(b)t(c)$ is an algebra map $\eta: B^e\to \cL$, where $B^e=B\tens B^{op}$. Moreover, this data is equivalent to making $\cL$ a $B^e$-ring with left and right actions of $B^e$ given by $(b\tens c).X=s(b) t(c)X$ and $X.(b\tens c)=Xs(b)t(c)$ respectively, for all $b,c\in B$ and $X\in \cL$. The left $B^e$-action is equivalent to a $B$-bimodule structure
\begin{equation}\label{eq:rbgd.bimod}
b.X.c= s(b) t(c)X
\end{equation}
for all $b,c\in B$ and $X\in \cL$. With respect to this bimodule structure, we have
\begin{align*}
     \cL\ot_{B} \cL=&\cL\ot \cL/\langle t(b)X\ot Y-X\ot s(b)Y\ |\  b\in B, X,Y\in \cL\rangle.
\end{align*}
One can check that the subset
\begin{equation*} \cL\times_{B} \cL :=\{\ \sum_i X_i \ot_{B} Y_i\ |\ \sum_i X_it(b) \ot_{B} Y_i=
\sum_i X_i \ot_{B}  Y_i  s(b),\quad \forall b\in B\ \}\subseteq \cL\tens_B\cL
\end{equation*}
 is a well-defined algebra via factor-wise multiplication, called the {\em Takeuchi product}.

\begin{defi}\label{def:right.bgd} Let $B$ be a unital algebra. A left $B$-bialgebroid is  first of all a unital algebra $\cL$ equipped with algebra maps  $s:B\to \cL$ and $t:B^{op}\to \cL$ such that $s(b)t(b')=t(b')s(b)$ for all $b,b'\in B$. Moreover, we require that $\cL$ is a $B$-coring with coproduct $\Delta$, counit $\varepsilon$ and $B$-bimodule structure  (\ref{eq:rbgd.bimod}) such that:
\begin{itemize}
\item[(i)]  $\Delta$ corestricts to an algebra map  $\cL\to \cL\times_B \cL$;
\item[(ii)]  $\varepsilon$ is a `left character' in the sense
\begin{equation*}\varepsilon(1_{\cL})=1_{B}, \quad \varepsilon(Xs(\varepsilon(Y)))=\varepsilon(XY)=\varepsilon(Xt (\varepsilon(Y)))\end{equation*}
for all $X,Y\in \cL$ and $b\in B$.
\end{itemize}
\end{defi}

We call $s:B\to \cL$ and $t:B^{op}\to \cL$ the source and target maps. They necessarily compose with $\varepsilon$ to the identity on $B, B^{op}$ respectively.

 \begin{defi}{\rm \cite{kornel}}. A morphism between a left $B$-bialgebroid $(\CL,s,t,\Delta,\eps)$ and a left $B'$-bialgebroid $(\CL',s',t',\Delta',\eps')$  is a pair $(\Phi, \varphi)$ of algebra maps  $\Phi: \cL \to \cL'$, $\varphi : B \to B'$ such that
\begin{align}
\Phi\circ s & = s'\circ \varphi , \qquad \Phi\circ t = t'\circ \varphi ,  \label{amoeba(i)} \\
(\Phi\ot_{B'} \Phi)\circ \Delta & = \Delta' \circ \Phi , \qquad
\varepsilon'\circ \Phi= \varphi\circ \varepsilon. \label{amoeba(ii)} 
 \end{align}   
It is an isomorphism when both maps in the pair are invertible. We denote by $\Aut(\CL)$ the isomorphisms of $\CL$ to itself. An automorphism from $\CL$ to itself is said to be {\em vertical} if $\varphi$ is the identity.
\end{defi}
Note that $\varphi$, if it exists, is uniquely determined from $\Phi$ by applying $\varepsilon'$ to the first of (\ref{amoeba(i)}), hence it is not additional data but its existence is a restriction on $\Phi$.  Automorphisms of a bialgebroid $\cL$
form a group $\Aut(\cL)$ by composition of $\Phi$, which implies composition of  $\varphi$ using the first of  (\ref{amoeba(i)}), and vertical ones form a subgroup. Since $\varphi$ is the auxiliary automorphism on the base, vertical automorphisms correspond geometrically to automorphisms of groupoids which preserves the base.

\begin{defi}\label{defHopf}{\rm \cite[Thm.-Def. 3.5]{schau1}.} 
A left bialgebroid $\cL$ is a {\em left Hopf algebroid} if
\[\lambda: \cL\ot_{B^{op}}\cL\to \cL\ot_{B}\cL,\quad
    \lambda(X\ot_{B^{op}} Y)=\one{X}\ot_{B}\two{X}Y\]
is invertible.  Similarly, a left bialgebroid $\cL$ is an {\em anti-left} Hopf algebroid if
\[\mu: \cL\ot^{B^{op}}\cL\to \cL\ot_{B}\cL,\quad
    \mu(X\ot^{B^{op}} Y)=\one{Y}X\ot_{B}\two{Y}\]
is invertible.  The balanced tensor products are given by 
\begin{align*}
\cL\ot_{B^{op}} \cL:=&\cL\ot \cL/\langle Xt(b)\ot Y-X\ot t(b)Y\ |\ b\in B, X,Y\in \cL\rangle,\\
    \cL\ot^{B^{op}} \cL:=&\cL\ot \cL/\langle s(b)X\ot Y-X\ot Ys(b)\ |\ b\in B, X,Y\in \cL\rangle.
\end{align*}
\end{defi}
If $B=k$ then this reduces to the map $\cL\tens\cL\to \cL\tens\cL$ given by $h\tens g\mapsto h\o\tens h\t g$ which for a usual Hopf algebra has inverse $h\tens g\mapsto h\o\tens (Sh\t)g$ if there is an antipode, where we used the Sweedler notation $\Delta h=h\o\tens h\t$ for the coproduct. We will also use such a notation for the coproduct of a bialgebroid. For an (anti-)left Hopf algebroid, we adopt the shorthand
\begin{equation}\label{X+-} X_{+}\ot_{B^{op}}X_{-}:=\lambda^{-1}(X\ot_{B}1),\end{equation}
\begin{equation}\label{X[+][-]} X_{[-]}\ot^{B^{op}}X_{[+]}:=\mu^{-1}(1\ot_{B}X).\end{equation}
We will also need some identities, and we recall from  \cite[Prop.~3.7]{schau1} that for a left Hopf algebroid,
\begin{align}
    \one{X_{+}}\ot_{B}\two{X_{+}}X_{-}&=X\ot_{B}1\label{equ. inverse lamda 1},\\
    \one{X}{}_{+}\ot_{B^{op}}\one{X}{}_{-}\two{X}&=X\ot_{B^{op}}1\label{equ. inverse lamda 2},\\
    (XY)_{+}\ot_{B^{op}}(XY)_{-}&=X_{+}Y_{+}\ot_{B^{op}}Y_{-}X_{-}\label{equ. inverse lamda 3},\\
    1_{+}\ot_{B^{op}}1_{-}&=1\ot_{B^{op}}1\label{equ. inverse lamda 4},\\
    \one{X_{+}}\ot_{B}\two{X_{+}}\ot_{B^{op}}X_{-}&=\one{X}\ot_{B}\two{X}{}_{+}\ot_{B^{op}}\two{X}{}_{-}\label{equ. inverse lamda 5},\\
    X_{+}\ot_{B^{op}}\one{X_{-}}\ot_{B}\two{X_{-}}&=X_{++}\ot_{B^{op}}X_{-}\ot_{B}X_{+-}\label{equ. inverse lamda 6},\\
    X&=X_{+}t(\eps(X_{-}))\label{equ. inverse lamda 7},\\
    X_{+}X_{-}&=s(\eps(X))\label{equ. inverse lamda 8},\\
    t(a)X_{+}\ot_{B^{op}}X_{-}&=X_{+}\ot_{B^{op}}X_{-}t(a)\label{equ. source and target map with lambda inv 5},\\
    (s(a)t(b)Xs(c)t(d))_{+}\ot_{B^{op}}(s(a)t(b)Xs(c)t(d))_{-}&=s(a)X_{+}s(c)\ot_{B^{op}}s(d)X_{-}s(b)\label{equ. source and target map with lambda inv 1},
\end{align}
for all $X, Y\in \cL$ and $a,b,c,d\in B$. Similarly, for an anti-left Hopf algebroid, we have the useful identities
\begin{align}
    \one{X_{[+]}}X_{[-]}\ot_{B}\two{X_{[+]}}&=1\ot_{B}X\label{equ. inverse mu 1},\\
    \two{X}{}_{[-]}\one{X}\ot^{B^{op}}\two{X}{}_{[+]}&=1\ot^{B^{op}}X\label{equ. inverse mu 2},\\
    (XY)_{[-]}\ot^{B^{op}}(XY)_{[+]}&=Y_{[-]}X_{[-]}\ot^{B^{op}}X_{[+]}Y_{[+]}\label{equ. inverse mu 3},\\
    1_{[-]}\ot^{B^{op}}1_{[+]}&=1\ot^{B^{op}}1\label{equ. inverse mu 4},\\
    X_{[-]}\ot^{B^{op}}\one{X_{[+]}}\ot_{B}\two{X_{[+]}}&=\one{X}{}_{[-]}\ot^{B^{op}}\one{X}{}_{[+]}\ot_{B}\two{X}\label{equ. inverse mu 5},\\
    (\one{X_{[-]}}\ot_{B}\two{X_{[-]}})\ot^{B^{op}}X_{[+]}&=(X_{[+][-]}\ot_{B}X_{[-]})\ot^{B^{op}}X_{[+][+]}\label{equ. inverse mu 6},\\
    X&=X_{[+]}s(\varepsilon(X_{[-]}))\label{equ. inverse mu 7},\\
    X_{[+]}X_{[-]}&=t(\varepsilon(X))\label{equ. inverse mu 8},\\
    (s(a)t(b)Xs(c)t(d))_{[-]}\ot^{B^{op}}(s(a)t(b)Xs(c)t(d))_{[+]}&=t(c)X_{[-]}t(a)\ot^{B^{op}} t(b)X_{[+]}t(d)\label{equ. inverse mu 9},\\
     X_{[-]}\ot^{B^{op}} s(b)X_{[+]}&=X_{[-]}s(b)\ot^{B^{op}} X_{[+]}\label{equ. inverse mu 12}.
\end{align}

Given a left $B$-bialgebroid $\cL$,   there is an algebra structure on ${}_{B}\Hom_{B}(\cL\otimes_{B^e}\cL, B)$ with the `convolution' product and unit given by
\begin{align}\label{equ. convolution product and unit}
    (f\star g) (X, Y):=f(\one{X}, \one{Y})g(\two{X}, \two{Y}),\quad \tilde{\varepsilon}(X, Y)= \varepsilon(XY)
\end{align}
for all $X, Y\in \cL$ and $f, g\in {}_{B}\Hom_{B}(\cL\otimes_{B^e}\cL, B)$. Here, and in what follows, is it sometimes convenient to write the value of a map on a tensor product in an element-wise manner with a sum of such terms and  descent to the tensor product understood. The $B$-bimodule structure on $\cL\otimes_{B^e}\cL$ is just the left $B^e$ action (so $b.(X, Y). c=(s(b)t(c)X, Y)$  for all $b, c\in B$) as induced by the algebra map $\eta: B^e\to \cL$.

\begin{defi} \rm\cite{HM22,Boehm, BB}. \label{Lcotwist}
Let $\CL$ be a left $B$-bialgebroid.  An \textup{invertible normalised 2-cocycle} on
$\cL$ is a convolution invertible element $\Gamma\in {}_{B}\Hom_{B}(\cL\otimes_{B^e}\cL, B)$ such that
\begin{align*}(i)\quad &\Gamma(X, s(\Gamma(\one{Y}, \one{Z}))\two{Y}\two{Z})=\Gamma(s(\Gamma(\one{X}, \one{Y}))\two{X}\two{Y}, Z),\quad \Gamma(1_{\cL}, X)=\varepsilon(X)=\Gamma(X, 1_{\cL});\\
(ii)\quad &\Gamma(X, Ys(b))=\Gamma(X, Yt(b))\end{align*}
for all $X, Y, Z\in \cL$ and $b\in B$. The collection of such 2-cocycles will be denoted $Z^{2}(\cL, B)$.
\end{defi}

We call $\tilde{\varepsilon}$ given in (\ref{equ. convolution product and unit}) the trivial 2-cocycle. Note that (ii) implies the minimal condition in  \cite{HM22} that $\Gamma^{-1}$ should be a right-handed 2-cocycle in the sense
\begin{align}\label{equ. right handed 2-cocycle}
    &\Gamma^{-1}(X, t(\Gamma^{-1}(\two{Y}, \two{Z}))\one{Y}\one{Z})=\Gamma^{-1}(t(\Gamma^{-1}(\two{X}, \two{Y}))\one{X}\one{Y}, Z),\\ &\Gamma^{-1}(1_{\cL}, X)=\varepsilon(X)=\Gamma^{-1}(X, 1_{\cL})
\end{align}
and also implies that $\Gamma^{-1}$ obeys (ii).

It is explained in  \cite{Boehm} that given a left $B$-bialgebroid and an invertible normalised 2-cocycle,  there is a twisted product
\begin{align}
    X\cdot_{\Gamma} Y:=s(\Gamma(\one{X}, \one{Y}))t(\Gamma^{-1}(\three{X}, \three{Y}))\two{X}\two{Y},
\end{align}
on the underlying vector space of $\cL$.  Moreover, the twisted product with the original $B^e$-bimodule and $B$-coring structure on $\cL$ form a left bialgebroid, which we denote  $\cL^\Gamma$.
\begin{thm} \rm\cite{HM22}. \label{thm. 2 cocycle twist1}
If $\cL$ is a (anti-)left Hopf algebroid, then $\cL^\Gamma$ is a (anti-)left Hopf algebroid.
\end{thm}

  If $B=k$ the ground field then a left Hopf algebroid reduces to a usual Hopf algebra and $\Gamma$ reduces to a usual Drinfeld cotwist cocycle in the sense of  \cite{Ma:book} with condition (ii) automatic. We next recall notions needed for cocycle smash products, as for example in the survey \cite{mont}. 
\begin{defi}\label{def: assoc type} Let $H$ be a Hopf algebra and $B$ a left $H$-module algebra with action  $\la$. A 2-cocycle on $H$ with values in $B$ is a linear map $\gamma:H\tens H\to B$ such that {\rm \cite{mont}}
\begin{align*}
    &\gamma(h, 1)=\gamma(1, h)=\varepsilon(h)1,\quad (\one{h}\triangleright\gamma(\one{g},\one{f}))\gamma(\two{h}, \two{g}\two{f})=\gamma(\one{h}, \one{g})\gamma(\two{h}\two{g}, f)
    \end{align*}
for all $h,g,f\in H$. We define $Z^2_{\la,as}(H,B)$ as the set of convolution-invertible 2-cocycles of {\em associative type} {\rm\cite{HM22}} in the sense
\[\gamma(\one{h}, \one{g})((\two{h}\two{g})\triangleright b)=((\one{h}\one{g})\triangleright b) \gamma(\two{h}, \two{g})
\]
for all $b\in B$, $h, g\in H$.
\end{defi}

Usually, as in  \cite{mont}, one considers cocycles as a pair $(\la,\gamma)$, where $\la$ is not necessarily an action, but in the associative type case it becomes an action. In the present work, we consider the action fixed and focus on $\gamma$. In either case, there is a cocycle smash product algebra $B\#_\gamma H$  built on the tensor product vector space with product
\[ (a\#_{\gamma}h)(b\#_{\gamma}g)=a(h\o\la b)\gamma(h\t,g\o)\#_{\gamma}h\th g\t. \]
This is a cleft extension\cite{mont} or `trivial' quantum principal bundle. We will also have recourse to  $Z^2_{\ra, as}(H,B)$ defined similarly with
\begin{align*}
    &\gamma(h, 1)=\gamma(1, h)=\varepsilon(h)1,\quad (\gamma(\one{g},\one{f})\ra h\o)\gamma(\two{h}, \two{f}\two{g})=\gamma(\one{h}, \one{g})\gamma(\two{g}\two{h}, f),\\
    &\gamma(\one{h}, \one{g})(b\ra (\two{g}\two{h}))=(b\ra (\one{g}\one{h})) \gamma(\two{h}, \two{g})
\end{align*}
for all $b\in B$, $f, h, g\in H$.

\section{\bf Bisections on Hopf algebroids}\label{secbisec}

\subsection{Left bisections on left Hopf algebroids}

As a noncommutative geometry version or `quantisation' of the classical notion of a bisection of a groupoid in Section~\ref{sec:preclass}, we have the following definition of bisection on any left bialgebroid.

\begin{defi}\label{def. left bisection of Hopf algebroids}
A left bisection on a left bialgebroid $\cL$ over $B$ is a unital linear map $\sigma: \cL\to B$ such that
\begin{itemize}
    \item [(1)] $\sigma(t(b)X)=\sigma(X)b$;
    \item[(2)] $\phi:=\sigma\circ s\in \Aut(B)$;
    \item[(3)] $\sigma(XY)=\sigma(Xt(\sigma(Y)))=\sigma(Xs(\phi^{-1}(\sigma(Y))))$
\end{itemize}
for all $b\in B$ and $X,Y\in \CL$.
\end{defi}

Given two left bisections $\sigma$ and $\eta$, there is a product given by
\begin{align}
    (\sigma\ast\eta)(X):=\sigma(s(\eta(\one{X}))\two{X}).
\end{align}
This is well defined over the balanced tensor product since $\sigma$ is right $B$-linear and makes the collection of bisections into a monoid, which we denote $\CB(\CL)$. We call a left bisection $\sigma$  {\em vertical} if $\sigma\circ s=\id_{B}$. We also observe that for any left bisection $\sigma$, 
\begin{align}\label{eqn:leftmodsigm}
    \sigma(s(b)X)=\sigma(s(b)t\circ\sigma(X))=\sigma(s(b))\sigma(X)=\phi(b)\sigma(X)
\end{align}
for all $b\in B$, $X\in \CL$.

\begin{thm}\label{thm. group of left bisections}
Let $\cL$ be a left bialgebroid  over $B$. Then $\CB(\cL)$ is a monoid with product $*$ and identity  given by the counit of $\cL$. It is a group if $\CL$ is a left Hopf algebroid, with the inverse of a  left bisection $\sigma$  given by
\begin{align}
    \sigma^{-1}(X)=(\sigma\circ s)^{-1}\varepsilon(X_{+}t(\sigma(X_{-})))
\end{align}
for all $X\in \cL$.  The vertical left bisections $\CB_{ver}(\CL)$ form a submonoid or subgroup respectively, with product reducing to the opposite convolution product. 
\end{thm}
\begin{proof}
We first show that the stated product of two left bisections $\sigma$ and $\eta$ is another left bisection. Here,  $(\eta\ast\sigma)(t(b))=b$ and
\begin{align*}
   ( \eta\ast\sigma)(t(b)X)&=\eta(s(\sigma(\one{X}))t(b)\two{X})=(\eta\ast\sigma)(X)b.
\end{align*}
 Similarly, $(\eta\ast\sigma)(s(b))=\eta\circ s\circ\sigma\circ s (b)$ so $(\eta\ast\sigma)\circ s\in \Aut(B)$ and
\begin{align*}
  ( \eta\ast\sigma)(s(b)X)=&\eta(s(\sigma(s(b)\one{X}))\two{X})
   =\eta(s(\sigma(s(b))\sigma(\one{X}))\two{X})\\
   =&\eta\circ s\circ\sigma\circ s (b)(\eta\ast\sigma)(X)
   =(\eta\ast\sigma)\circ s (b)(\eta\ast\sigma)(X).
\end{align*}
Moreover,
\begin{align*}
    (\eta\ast\sigma)(XY)=&\eta(s(\sigma(\one{X}\one{Y}))\two{X}\two{Y})\\
    =&\eta(s(\sigma(\one{X}t(\sigma(\one{Y}))))\two{X}s\circ(\eta\circ s)^{-1}\circ\eta(\two{Y}))\\
    =&\eta(s(\sigma(\one{X}))\two{X}s\big(\sigma(\one{Y})(\eta\circ s)^{-1}\circ\eta(\two{Y})\big))\\
    =&\eta(s(\sigma(\one{X}))\two{X}s\circ(\eta\circ s)^{-1}\big(\eta\circ s\circ\sigma(\one{Y})\eta(\two{Y})\big))\\
    =&\eta(s(\sigma(\one{X}))\two{X}s\circ(\eta\circ s)^{-1}\big((\eta\ast\sigma)(Y)\big))\\
    =&\eta(s(\sigma(\one{X}))\two{X}t\big((\eta\ast\sigma)(Y)\big))\\
    =&(\eta\ast\sigma)(Xt\big((\eta\ast\sigma)(Y)\big)).
\end{align*}
Similarly, from the 5th step above,
\begin{align*}
    (\eta\ast\sigma)(XY)=&\eta(s(\sigma(\one{X}))\two{X}s\circ(\eta\circ s)^{-1}\big((\eta\ast\sigma)(Y)\big))\\
    =&\eta(s\Big(\sigma(\one{X}t\circ(\eta\circ s)^{-1}\big((\eta\ast\sigma)(Y)\big))\Big)\two{X})\\
    =&\eta(s\Big(\sigma(\one{X}s\circ(\sigma\circ s)^{-1}\circ(\eta\circ s)^{-1}\big((\eta\ast\sigma)(Y)\big))\Big)\two{X})\\
    &\eta(s\Big(\sigma(\one{X}s\circ((\eta\ast\sigma)\circ s)^{-1}\big((\eta\ast\sigma)(Y)\big))\Big)\two{X})\\
    =&(\eta\ast\sigma)(Xs\circ((\eta\ast\sigma)\circ s)^{-1} \big((\eta\ast\sigma)(Y)\big)).
\end{align*}
Next, for the group identity,
\begin{align*}
    (\sigma\ast\varepsilon)(X)&=\sigma(s(\varepsilon(\one{X}))\two{X})=\sigma(X),\\
    (\varepsilon\ast\sigma)(X)&=\varepsilon(s(\sigma(\one{X}))\two{X})=\sigma(\one{X})\varepsilon(\two{X})=\sigma(t(\varepsilon(\two{X}))\one{X})=\sigma(X).
\end{align*}
Once we know that the result is another bisection, the product can be iterated and is easily seen to be associative using left $B$-linearity and coassociativity of the coproduct of $\cL$. Moreover, if $\sigma$ is vertical then the product is $(\sigma\ast\tau)(X)=\tau(X\o)\sigma(X\t)$ so that, in particular, the product on $\CB_{ver}(\CL)$ reduces to the opposite of the convolution product.

Next, if $\CL$ is a left Hopf algebroid then   $\sigma^{-1}$ as stated is a left bisection since
\begin{align*}
    \sigma^{-1}(t(b)X)=&(\sigma\circ s)^{-1}\varepsilon(X_{+}t(\sigma(X_{-}s(b))))
    =(\sigma\circ s)^{-1}\varepsilon(X_{+}t(\sigma(X_{-}t\circ\sigma\circ s(b))))\\
    =&(\sigma\circ s)^{-1}\varepsilon(t\circ\sigma\circ s(b)X_{+}t(\sigma(X_{-}))
    =(\sigma\circ s)^{-1}\varepsilon(X_{+}t(\sigma(X_{-}))b=\sigma^{-1}(X)b,
\end{align*}
where the 3rd step uses (\ref{equ. source and target map with lambda inv 5}). We also have that $\sigma^{-1}(s(b)X)=(\sigma\circ s)^{-1}\circ\varepsilon(s(b)X_{+}t(\sigma(X_{-})))=(\sigma\circ s)^{-1}(b)\sigma(X)$. Moreover,
\begin{align*}
    \sigma^{-1}(Xt(\sigma^{-1}(Y)))=&(\sigma\circ s)^{-1}\varepsilon(X_{+}t(\sigma(s\big(\sigma^{-1}(Y)\big)X_{-})))\\
    =&(\sigma\circ s)^{-1}\varepsilon(X_{+}t((\sigma\circ s)\big(\sigma^{-1}(Y)\big)\sigma(X_{-})))\\
    =&(\sigma\circ s)^{-1}\varepsilon(X_{+}t\Big((\sigma\circ s)\circ(\sigma\circ s)^{-1}\circ\varepsilon\big(Y_{+}t(\sigma(Y_{-}))\big)\sigma(X_{-})\Big))\\
    =&(\sigma\circ s)^{-1}\varepsilon(X_{+}t\Big(\varepsilon\big(Y_{+}t(\sigma(Y_{-}))\big)\sigma(X_{-})\Big))\\
    =&(\sigma\circ s)^{-1}\varepsilon(X_{+}t\Big(\varepsilon\big(t(\sigma(X_{-}))Y_{+}t(\sigma(Y_{-}))\big)\Big))\\
    =&(\sigma\circ s)^{-1}\varepsilon(X_{+}t\Big(\varepsilon\big(Y_{+}t(\sigma(Y_{-}t(\sigma(X_{-}))))\big)\Big))\\
    =&(\sigma\circ s)^{-1}\varepsilon(X_{+}Y_{+}t(\sigma(Y_{-}t(\sigma(X_{-})))))\\
    =&(\sigma\circ s)^{-1}\varepsilon(X_{+}Y_{+}t(\sigma(Y_{-}X_{-})))\\
    =&(\sigma\circ s)^{-1}\varepsilon((XY)_{+}t(\sigma((XY)_{-})))
    =\sigma^{-1}(XY),
\end{align*}
where the 6th step uses (\ref{equ. source and target map with lambda inv 5}) and the 9th step uses (\ref{equ. inverse lamda 3}).
Similarly, we have,
\begin{align*}
  \sigma^{-1}(Xs\circ(\sigma\circ s)\circ\sigma^{-1}(Y))=&(\sigma\circ s)^{-1}\circ \varepsilon\Big(X_{+}s\circ\sigma\circ s\circ\sigma^{-1}(Y)t\circ\sigma(X_{-})\Big)\\
  =&(\sigma\circ s)^{-1}\circ \varepsilon\Big(X_{+}s\circ\sigma\circ s\circ(\sigma\circ s)^{-1}\circ\varepsilon\big(Y_{+}t(\sigma(Y_{-}))\big)s\circ\sigma(X_{-})\Big)\\
  =&(\sigma\circ s)^{-1}\circ \varepsilon\Big(X_{+}s\circ\varepsilon\big(Y_{+}t(\sigma(Y_{-}))\big)s\circ\sigma(X_{-})\Big)\\
  =&(\sigma\circ s)^{-1}\circ \varepsilon\Big(X_{+}s\circ\varepsilon\big(t\circ\sigma(X_{-})Y_{+}t(\sigma(Y_{-}))\big)\Big)\\
  =&(\sigma\circ s)^{-1}\circ \varepsilon\Big(X_{+}s\circ\varepsilon\big(Y_{+}t(\sigma(Y_{-}t\circ\sigma(X_{-})))\big)\Big)\\
  =&(\sigma\circ s)^{-1}\varepsilon(X_{+}Y_{+}t(\sigma(Y_{-}X_{-})))\\
    =&(\sigma\circ s)^{-1}\varepsilon((XY)_{+}t(\sigma((XY)_{-})))
    =\sigma^{-1}(XY),
\end{align*}
where the 5th step uses (\ref{equ. source and target map with lambda inv 5}) and the 7th step uses (\ref{equ. inverse lamda 3}).
Finally, 
\begin{align*}
    (\sigma^{-1}\ast\sigma)(X)=&(\sigma\circ s)^{-1}\circ\varepsilon(s(\sigma(\one{X}))\two{X}{}_{+}t(\sigma(\two{X}{}_{-})))\\
    =&(\sigma\circ s)^{-1}\circ\varepsilon(s(\sigma(\one{X_{+}}))\two{X_{+}}t(\sigma(X_{-})))\\
    =&(\sigma\circ s)^{-1}\circ\varepsilon(s(\sigma(\one{X_{+}}))\two{X_{+}}s(\sigma(X_{-})))\\
    =&(\sigma\circ s)^{-1}\circ\varepsilon(s\big(\sigma(\one{X_{+}}t(\sigma(X_{-})))\big)\two{X_{+}})\\
    =&(\sigma\circ s)^{-1}\sigma\big(\one{X_{+}}t(\sigma(X_{-}))\big)(\sigma\circ s)^{-1}\circ\varepsilon\big(\two{X_{+}}\big)\\
    =&(\sigma\circ s)^{-1}\sigma\big(t\circ\varepsilon\big(\two{X_{+}})\one{X_{+}}t(\sigma(X_{-}))\big)\\
    =&(\sigma\circ s)^{-1}\sigma\big(X_{+}t(\sigma(X_{-}))\big)\\
    =&(\sigma\circ s)^{-1}\sigma\big(X_{+}X_{-}\big)\\
    =&(\sigma\circ s)^{-1}\sigma\big(s\circ\varepsilon(X)\big)=\varepsilon(X),
\end{align*}
where the 2nd step uses (\ref{equ. inverse lamda 5}) and the 9th step uses (\ref{equ. inverse lamda 8}). On the other side,
\begin{align*}
    (\sigma\ast\sigma^{-1})(X)=&\sigma(s\circ\sigma^{-1}(\one{X})\two{X})\\
    =&\sigma(s\circ(\sigma\circ s)^{-1}\varepsilon\big(\one{X}{}_{+}t(\sigma(\one{X}{}_{-}))\big)\two{X})\\
    =&\varepsilon\big(\one{X}{}_{+}t(\sigma(\one{X}{}_{-}))\big)\sigma(\two{X})\\
    =&\varepsilon\big(t\circ\sigma(\two{X})\one{X}{}_{+}t(\sigma(\one{X}{}_{-}))\big)\\
    =&\varepsilon\big(\one{X}{}_{+}t(\sigma(\one{X}{}_{-}t\circ\sigma(\two{X})))\big)\\
    =&\varepsilon\big(\one{X}{}_{+}t(\sigma(\one{X}{}_{-}\two{X}))\big)=\varepsilon(X),
\end{align*}
where the 5th step uses (\ref{equ. source and target map with lambda inv 5}) and the last step uses (\ref{equ. inverse lamda 2}). \end{proof}

We will also be interested in a less restrictive version of vertical left bisections, which we call `extended'.

\begin{defi}\label{defextver} Given a left bialgebroid $\CL$, the set $\CB_{ext.ver}(\CL)$ of extended vertical left bisections is defined as for $\CB_{ver}(\CL)$ but with the first equality of axiom (3) in Definition~\ref{def. left bisection of Hopf algebroids} replaced by convolution-invertibility. The remainder of axiom (3) in the vertical case reduces to $\sigma(X t(b))=\sigma(X s(b))$ for all $X\in \CL, b\in B$. 
\end{defi}

\begin{prop}\label{prop.left.extended} $\CB_{ext. ver}(\CL)$ for a left bialgebroid $\CL$ is a group with the opposite convolution product. If $\CL$ is a left Hopf algebroid then $\CB_{ver}(\CL) \subseteq\CB_{ext. ver}(\CL)$ is a subgroup.
\end{prop}
\proof (1) For any $\sigma,\tau\in \CB_{ver}\subseteq\CB_{ext. ver}(\CL)$, that the first axiom of Definition~\ref{def. left bisection of Hopf algebroids} is closed under $\ast$ proceeds as for the theorem, the second axiom is automatic as we limit ourselves to the vertical case, convolution invertibility is automatically closed under $\ast$, and it remains to check that 
\begin{align*}
(\tau\ast \sigma)(Xt(b))=&\sigma(\one{X})\tau(\two{X}t(b))=   \sigma(\one{X})\tau(\two{X}s(b)) =\sigma(\one{X}t(b))\tau(\two{X})\\
=&\sigma(\one{X}s(b))\tau(\two{X})=(\tau\ast \sigma)(Xs(b)),
\end{align*}
where the 2nd step uses the corresponding identity for $\tau$ and the 4th for $\sigma$. The unit of this group is just the counit of $\cL$. The convolution inverse $\sigma^{-1}$ also satisfies remainder of axiom (3), 
\begin{align*}
    \sigma^{-1}(Xt(b))=&\sigma^{-1}(\one{X}t(b))\sigma(\two{X})\sigma^{-1}(\three{X})\\
    =&\sigma^{-1}(\one{X})\sigma(\two{X}s(b))\sigma^{-1}(\three{X})\\
    =&\sigma^{-1}(\one{X})\sigma(\two{X}t(b))\sigma^{-1}(\three{X})\\
    =&\sigma^{-1}(\one{X})\sigma(\two{X})\sigma^{-1}(\three{X}s(b))=\sigma^{-1}(Xs(b))
\end{align*}
for all $X\in \CL, b\in B$. 

(2) That $\CB_{ver}(\CL)$ is a subgroup holds by Theorem~\ref{thm. group of left bisections} as this shows
that its elements are convolution-invertible.
\endproof

\subsection{Crossed module  associated to the group of left bisections}

We have explained the group crossed module (or equivalently, 2-group) associated to bisections in the case of a classical groupoid in Section~\ref{sec:preclass}. Here, we show that we continue to have a group crossed module in a parallel way even for a Hopf algebroid. This justifies our axioms of a bisection at the Hopf algebroid level and provides a bigger class of examples of group crossed modules as now arising even when the base and total space coordinate algebras are  noncommutative.

\begin{prop}\label{equ. left adjoint automorphism} For every  left bisection $\sigma$ of a left Hopf algebroid $\cL$ over $B$, there is an associated left Hopf algebroid automorphism $(\Ad^L_\sigma:\CL\to \CL, \ad^L_\sigma:B\to B)$ defined by
\begin{align*}
    \Ad^{L}_{\sigma}(X):=s(\sigma(\one{X}))\two{X}{}_{+}t(\sigma(\two{X}{}_{-})),\quad \ad^{L}_{\sigma}:=\sigma\circ s.
\end{align*}
Moreover, an automorphism $(\Phi, \phi)$ of a Hopf algebroid $\CL$ acts on a left bisection $\sigma$ by
\begin{align}\label{tautaction}
    (\Phi,\phi)\triangleright\sigma:=\phi\circ\sigma\circ\Phi^{-1}.
\end{align}
The map
$\Ad^{L}: \CB(\cL)\to\Aut(\cL)$ and the  action $\triangleright$ of $\Aut(\cL)$ on $\CB(\cL)$ make $\big( \CB(\cL), \Aut(\cL) \big)$ into a group crossed module.
\end{prop}
\proof
It is straight forward to check that $\Ad^L_\sigma$ is well defined. Moreover, $\Ad^{L}_{\sigma}\circ s=\ad^{L}_{\sigma}\circ s$ and $\Ad^{L}_{\sigma}\circ t=\ad^{L}_{\sigma}\circ t$, so this is a $B$-bimodule map. One can also check that $\varepsilon\circ \Ad^{L}_{\sigma}=\ad^{L}_{\sigma}\circ \varepsilon$. Next, we show that $\Ad^{L}_{\sigma}$ preserves the product and coproduct,
\begin{align*}
    \Ad^{L}_{\sigma}(XY)=&s(\sigma(\one{X}\one{Y}))\two{X}{}_{+}\two{Y}{}_{+}t(\sigma(\two{Y}{}_{-}\two{X}{}_{-}))\\
    =&s(\sigma(\one{X}t\circ\sigma(\one{Y})))\two{X}{}_{+}\two{Y}{}_{+}t(\sigma(\two{Y}{}_{-}t\circ\sigma(\two{X}{}_{-})))\\
    =&s(\sigma(\one{X}))\two{X}{}_{+}s\circ\sigma(\one{Y})t\circ\sigma(\two{X}{}_{-})\two{Y}{}_{+}t(\sigma(\two{Y}{}_{-}))\\
    =&\Ad^{L}_{\sigma}(X)\Ad^{L}_{\sigma}(Y),
\end{align*}
where the 3rd step uses (\ref{equ. source and target map with lambda inv 5}), and
\begin{align*}
    \Ad^{L}_{\sigma}\ot_{B}\Ad^{L}_{\sigma}(\one{X}\ot_{B}\two{X})=&s(\sigma(\one{X}))\two{X}{}_{+}t(\sigma(\two{X}{}_{-}))\ot_{B}s(\sigma(\three{X}))\four{X}{}_{+}t(\sigma(\four{X}{}_{-}))\\
    =&s(\sigma(\one{X}))t(\sigma(\three{X}))\two{X}{}_{+}t(\sigma(\two{X}{}_{-}))\ot_{B}\four{X}{}_{+}t(\sigma(\four{X}{}_{-}))\\
    =&s(\sigma(\one{X}))\two{X}{}_{+}t(\sigma(\two{X}{}_{-}t(\sigma(\three{X}))))\ot_{B}\four{X}{}_{+}t(\sigma(\four{X}{}_{-}))\\
    =&s(\sigma(\one{X}))\two{X}{}_{+}t(\sigma(\two{X}{}_{-}\three{X}))\ot_{B}\four{X}{}_{+}t(\sigma(\four{X}{}_{-}))\\
    =&s(\sigma(\one{X}))\two{X}\ot_{B}\three{X}{}_{+}t(\sigma(\three{X}{}_{-}))\\
    =&s(\sigma(\one{X}))\one{\two{X}{}_{+}}\ot_{B}\two{\two{X}{}_{+}}t(\sigma(\two{X}{}_{-}))\\
    =&\Delta\circ \Ad^{L}_{\sigma}(X),
\end{align*}
where the 3rd step uses (\ref{equ. source and target map with lambda inv 5}), the 5th step uses (\ref{equ. inverse lamda 2}) and the 6th step uses (\ref{equ. inverse lamda 5}). Also, given two left bisections $\sigma$ and $\eta$, we have
\begin{align*}
    \Ad^{L}_{\sigma}\circ \Ad^{L}_{\eta}(X)=&s(\sigma(s(\eta(\one{X}))\one{\two{X}{}_{+}}))\two{\two{X}{}_{+}}{}_{+}t(\sigma(s(\eta(\two{X}{}_{-}))\two{\two{X}{}_{+}}{}_{-}))\\
    =&s(\sigma(s(\eta(\one{X}))\two{X}))\three{X}{}_{+}t(\sigma(s(\eta(\one{\three{X}{}_{-}}))\two{\three{X}{}_{-}})) =\Ad^{L}_{\sigma\ast\eta},
\end{align*}
where we use (\ref{equ. inverse lamda 5}) and (\ref{equ. inverse lamda 6}). Finally, one can check that $\la$ is a left action of $\Aut(\CL)$ on $\CB(\cL)$ by groups automorphisms, which we omit as it follows similar lines to a special case in  \cite{HL20}. We thus have a similar conclusion in our more general setup. \endproof

This generalises a result \cite[Thm.~7.1]{HL20} for the specific case of the Ehresmann-Schauenburg Hopf algebroid. 
There is also a right-handed version of bisections and of our results thus far. 

\begin{defi}\label{def. right bisection of Hopf algebroids}
A right bisection on a left bialgebroid  $\CL$ is a unital linear map $\sigma: \cL\to B$, such that
\begin{itemize}
    \item [(1)] $\sigma(s(b)X)=b\sigma(X)$;
    \item[(2)] $\psi:=\sigma\circ t\in \Aut(B)$;
    \item[(3)] $\sigma(XY)=\sigma(Xs(\sigma(Y)))=\sigma(Xt(\psi^{-1}(\sigma(Y))))$
\end{itemize}
for all $b\in B$ and $X,Y\in \CL$. \end{defi}

This time, if $\cL$ is an {\em anti-left} Hopf algebroid then the set of right bisections forms a group $\CB^R(\CL)$, with identity given by the counit, the product given by 
\begin{align}
    (\sigma\bullet\eta)(X):=\sigma(t\circ\eta(\two{X})\one{X}),
\end{align}
and the inverse of $\sigma$ given by
\begin{align}\label{equ. the inverse of right bisection}
    \sigma^{-1}(X)=(\sigma\circ t)^{-1}\varepsilon(X_{[+]}s(\sigma(X_{[-]})))
\end{align}
for all $X\in \cL$.  The set $\CB^R_{ver}(\CL)$ of vertical right bisections where $\sigma\circ t=\id_{B}$   is the same set as $\CB_{ver}(\CL)$ with opposite monoid structure on comparing the definitions and noting (\ref{eqn:leftmodsigm}). 
If $\CL$ is a left or anti-left Hopf algebroid then one or other of these is a group and hence both are, with opposite group structures.   Similarly, extended vertical right bisections of a left bialgebroid are taken with the convolution product but are the same set as the left version, i.e. with the opposite group structure to the one in Proposition~\ref{prop.left.extended}. Note that being opposite groups still means that they are isomorphic as groups if we compose with the group inversion. 

 Finally, given a right bisection $\sigma$, we define an automorphism $(\Ad_\sigma^{R},\ad_\sigma^R)$ of an anti-left Hopf algebroid $\cL$ by
 \begin{align}\label{equ. right adjoint automorphism}
    \Ad^{R}_{\sigma}(X):=t(\sigma(\two{X}))\one{X}{}_{[+]}s(\sigma(\one{X}{}_{[-]})),\quad \ad^{R}_{\sigma}:=\sigma\circ t, 
\end{align}
and view this construction as a group homomorphism $\CB^R(\cL)\to \Aut(\CL)$.  As before, an automorphism $(\Phi, \phi)$ of $\cL$ acts tautologically on a right bisection $\sigma$ by
\begin{align}\label{equ. right bisection automorphism}
    (\Phi,\phi)\triangleright\sigma:=\phi\circ\sigma\circ\Phi^{-1}, 
\end{align}
which one can check is a left action of $\Aut(\CL)$ on $\CB^R(\cL)$ by group automorphisms. We then have that $\big( \CB^R(\cL), \Aut(\cL), \Ad^{R},\la \big)$ is also a group crossed module. The proofs are similar and we omit them.

\subsection{Non-Abelian second cohomology $\CH^2(\CL,B)$}\label{seccoh}

Let $\CL$ be a left $B$-bialgebroid. In this section, the same group $\CB_{ext. ver}(\CL)$ of extended vertical left bisections in Definition~\ref{defextver} will arise naturally from a very different point of view as the 1-cochains in a certain non-Abelian cohomology. Moreover,  in the left Hopf algebroid case, the original (non-extended) vertical ones $\CB_{ver}(\CL)$ will appear as 1-cocycles. Meanwhile, the 2-cocycles are  precisely the $Z^2(\CL,B)$ that control Drinfeld-type cotwists of Hopf algebroids in Definition~\ref{Lcotwist}. Thus, we bring together two notions that one might not have expected even to be related, namely bisections coming from geometry and cotwisting theory coming  from quantum groups.

\begin{defi}\label{remver}  Let $\CL$ be a left $B$-bialgebroid. We let $C^{1}(\cL, B)$ denote the set of convolution invertible elements $U\in {}_{B}\Hom_{B}(\cL, B)$ such that
\begin{align}\label{eq1}
  U(Xs(b))=U(Xt(b))
\end{align}
for all $X\in\cL$ and $b\in B$. This forms a group under convolution product coinciding with $\CB_{ext. ver}(\CL)^{op}$.
\end{defi}

The sets here coincide from their definition but the product in  Proposition~\ref{prop.left.extended} has the opposite to the convolution product (i.e., we can identify it more canonically with the group of extended vertical right bisections).

\begin{thm}\label{thm: boundary map}
Let $\CL$ be a left bialgebroid. 

(1) For any $U\in C^{1}(\cL, B)$, we obtain a 2-cocycle $\partial U\in Z^2(\CL,B)$ by
\begin{align}\label{boundary map}
    \partial U(X, Y):=U^{-1}(\one{X}t(U^{-1}(\one{Y})))U(\two{X}\two{Y})
\end{align}
for all $X,Y\in \CL$, where $U^{-1}$ is the inverse of $U$ under convolution product. Moreover, 
the subset $Z^1(\CL,B)$  of $U\in C^1(\CL,B)$ for which $\partial U$ is trivial forms a submonoid under convolution,  coinciding with $\CB_{ver}(\CL)^{op}$. 

(2)  If $\Gamma\in Z^2(\CL,B)$ and $U\in C^{1}(\cL, B)$ then
\begin{align}\label{equ of bialgebroid cocycle}
    \Gamma^U(X, Y)=U^{-1}(\one{X}t(U^{-1}(\one{Y})))\Gamma(\two{X}, \two{Y})U(\three{X}\three{Y})
\end{align}
for all $X,Y\in \CL$ is another 2-cocycle in $Z^2(\CL,B)$. The set of orbits in $Z^2(\CL,B)$ under such an action is denoted by $\H^{2}(\cL, B)$. 
\end{thm}
\begin{proof}
(1) For the coboundary case, we first check that $\partial U$ is $B$-bilinear,
\begin{align*}
    \partial U(s(b)t(b')X, Y)=&U^{-1}(s(b)\one{X}t(U^{-1}(\one{Y})))U(t(b')\two{X}\two{Y})\\
    =&b U^{-1}(\one{X}t(U^{-1}(\one{Y})))U(\two{X}\two{Y})b'.
\end{align*}
It is also well defined over the balanced tensor product $\otimes_{B^{e}}$,
\begin{align*}
    \partial U(Xs(b)t(b'), Y)=&U^{-1}(\one{X}s(b)t(U^{-1}(\one{Y})))U(\two{X}t(b')\two{Y})\\
    =&U^{-1}(\one{X}s(b)s(U^{-1}(\one{Y})))U(\two{X}t(b')\two{Y})\\
    =&U^{-1}(\one{X}s(bU^{-1}(\one{Y})))U(\two{X}t(b')\two{Y})\\
    =&U^{-1}(\one{X}s(U^{-1}(s(b)\one{Y})))U(\two{X}t(b')\two{Y})\\
    =&\partial U(X, s(b)t(b')Y),
\end{align*}
where the 2nd step uses (\ref{eq1}). Next, we show that $\partial U$ is invertible with  inverse given by
\begin{align}\label{inverse of boundard map}
    (\partial U)^{-1}(X, Y)=U^{-1}(\one{X}\one{Y})U(\two{X}s(U(\two{Y}))).
\end{align}
On the one hand,
\begin{align*}
    (\partial U)\ast  (\partial U)^{-1} (X, Y)=&U^{-1}(\one{X}t(U^{-1}(\one{Y})))U(\two{X}\two{Y})U^{-1}(\three{X}\three{Y})U(\four{X}s(U(\four{Y})))\\
    =&U^{-1}(\one{X}t(U^{-1}(\one{Y})))\varepsilon(\two{X}\two{Y})U(\three{X}s(U(\three{Y})))\\
    =&U^{-1}(\one{X}t(U^{-1}(\one{Y})))\varepsilon(\two{X}t(\varepsilon(\two{Y})))U(\three{X}s(U(\three{Y})))\\
    =&U^{-1}(\one{X}t(U^{-1}(\one{Y})))\varepsilon(\two{X})U(\three{X}s(\varepsilon(\two{Y}))s(U(\three{Y})))\\
    =&U^{-1}(\one{X}t(U^{-1}(\one{Y})))\varepsilon(\two{X})U(\three{X}s(U(\two{Y})))\\
    =&U^{-1}(\one{X}t(U^{-1}(\one{Y})))U(s(\varepsilon(\two{X}))\three{X}s(U(\two{Y})))\\
    =&U^{-1}(\one{X}t(U^{-1}(\one{Y})))U(\two{X}s(U(\two{Y})))\\
    =&U^{-1}(\one{X})U(\two{X}s(U^{-1}(\one{Y}))s(U(\two{Y})))\\
    =&U^{-1}(\one{X})U(\two{X}s(\varepsilon(Y)))\\
    =&U^{-1}(\one{X})U(\two{X}t(\varepsilon(Y)))=\varepsilon(Xt(\varepsilon(Y)))=\varepsilon(XY),
\end{align*}
where the 10th step uses (\ref{eq1}). On the other hand,
\begin{align*}
    (\partial U)^{-1}\ast  (\partial U)(X, Y)=&U^{-1}(\one{X}\one{Y})U(\two{X}s(U(\two{Y})))U^{-1}(\three{X}t(U^{-1}(\three{Y})))U(\four{X}\four{Y})\\
    =&U^{-1}(\one{X}t(U(\two{Y}))\one{Y})U(\two{X})U^{-1}(\three{X})U(\four{X}s(U^{-1}(\three{Y}))\four{Y})\\
    =&U^{-1}(\one{X}t(U(\two{Y}))\one{Y})U(\two{X}s(U^{-1}(\three{Y}))\four{Y})\\
    =&U^{-1}(\one{X}\one{Y})U(\two{X}s(U(\two{Y}))s(U^{-1}(\three{Y}))\four{Y})\\
    =&U^{-1}(\one{X}\one{Y})U(\two{X}\two{Y}) =\varepsilon(XY)
\end{align*}
as required. Moreover, $(\partial U)^{-1}$ is clearly $B$-bilinear and is well defined over the balanced tensor product,
\begin{align*}
    (\partial U)^{-1}(Xs(b)t(b'), Y)=&U^{-1}(\one{X}s(b)\one{Y})U(\two{X}t(b')s(U(\two{Y})))\\
    =&U^{-1}(\one{X}s(b)\one{Y})U(\two{X}t(b')t(U(\two{Y})))\\
    =&U^{-1}(\one{X}s(b)\one{Y})U(\two{X}t(U(\two{Y})b'))\\
    =&U^{-1}(\one{X}s(b)\one{Y})U(\two{X}t(U(t(b')\two{Y}))).
\end{align*}

Finally, we check that $\partial U$ satisfies the 2-cocycle condition. On the one hand,
\begin{align*}
    \partial U(X, s(\partial &U(\one{Y}, \one{Z}))\two{Y}\two{Z})\\
   =&\partial U(X, s\big(U^{-1}(\one{Y}t(U^{-1}(\one{Z})))U(\two{Y}\two{Z})\big)\three{Y}\three{Z})\\
   =&U^{-1}(\one{X}t\bigg(U^{-1}\Big(s\big(U^{-1}(\one{Y}t(U^{-1}(\one{Z})))U(\two{Y}\two{Z})\big)\three{Y}\three{Z}\Big)\bigg))U(\two{X}\four{Y}\four{Z})\\
   =&U^{-1}(\one{X}t\Big(\big(U^{-1}(\one{Y}t(U^{-1}(\one{Z})))U(\two{Y}\two{Z})\big)U^{-1}\big(\three{Y}\three{Z}\big)\Big))U(\two{X}\four{Y}\four{Z})\\
   =&U^{-1}(\one{X}t\Big(U^{-1}(\one{Y}t(U^{-1}(\one{Z})))\varepsilon(\two{Y}\two{Z})\Big))U(\two{X}\three{Y}\three{Z})\\
   =&U^{-1}(\one{X}t(\varepsilon(\two{Y}\two{Z}))t\Big(U^{-1}(\one{Y}t(U^{-1}(\one{Z})))\Big))U(\two{X}\three{Y}\three{Z})\\
   =&U^{-1}(\one{X}t\Big(U^{-1}(\one{Y}t(U^{-1}(\one{Z})))\Big))U(\two{X}s(\varepsilon(\two{Y}\two{Z}))\three{Y}\three{Z})\\
   =&U^{-1}(\one{X}t(U^{-1}(\one{Y}t(U^{-1}(\one{Z})))))U(\two{X}\two{Y}\two{Z}).
\end{align*}
On the other hand,
\begin{align*}
    \partial U(s(\partial &U(\one{X}, \one{Y}))\two{X}\two{Y}, Z)\\
    =&\partial U(\one{X}, \one{Y})) \partial U(\two{X}\two{Y}, Z)\\
    =&U^{-1}(\one{X}t(U^{-1}(\one{Y})))U(\two{X}\two{Y})U^{-1}(\three{X}\three{Y}t(U^{-1}(\one{z})))U(\four{X}\four{Y}\two{Z})\\
    =&U^{-1}(\one{X}t(U^{-1}(\one{Y})))U(\two{X}\two{Y})U^{-1}(\three{X}\three{Y})U(\four{X}\four{Y}s(U^{-1}(\one{z}))\two{Z})\\
    =&U^{-1}(\one{X}t(U^{-1}(\one{Y})))\varepsilon(\two{X}\two{Y})U(\three{X}\three{Y}s(U^{-1}(\one{Z}))\two{Z})\\
    =&U^{-1}(\one{X}t(U^{-1}(\one{Y})))U(s(\varepsilon(\two{X}\two{Y}))\three{X}\three{Y}s(U^{-1}(\one{Z}))\two{Z})\\
    =&U^{-1}(\one{X}t(U^{-1}(\one{Y})))U(\two{X}\two{Y}s(U^{-1}(\one{Z}))\two{Z}),
\end{align*}
which simplifies to the same expression. One can also check that $\partial U(X, Ys(b))=\partial U(X, Yt(b))$ and $\partial U^{-1}(X, Ys(b))=\partial U^{-1}(X, Yt(b))$. 

Moreover, if $U$ is a vertical bisection then $\partial U(X, Y)=\varepsilon(XY)$ as $U^{-1}$ satisfies (3) of Definition \ref{def. left bisection of Hopf algebroids}. Conversely,  if $\partial U(X,Y)=\varepsilon(XY)$ then we have
\begin{align*}
    U(X\,s(U(Y)))=& U(X\o\,s(U(Y\o)))U^{-1}(X\t\,t(U^{-1}(Y\t)))U(X\th\, Y\th)\\
    =& U(X\o\,t(U(Y\o)))U^{-1}(X\t\,s(U^{-1}(Y\t)))U(X\th\, Y\th)\\
    =& U(X\o)U^{-1}(X\t\,s(U(Y\o))\,s(U^{-1}(Y\t)))U(X\th\, Y\th)\\
    =& \varepsilon(X\o\,Y\o)U(X\t\, Y\t)\\
    =&U(XY).
\end{align*}
Hence these are the same set, and we know that $\CB_{ver}(\CL)$ is a monoid with the opposite convolution product.

(2)  For the general case, the proof is similar. We first check that (\ref{equ of bialgebroid cocycle}) is well defined. Indeed, by the same method as in part (1), the right hand side is $B$-bilinear and well defined over the balanced tensor product. We can also see that it is invertible with inverse given by
\begin{align}
    (\Gamma^U{})^{-1}(X, Y)=U^{-1}(\one{X}\one{Y})\Gamma^{-1}(\two{X}, \two{Y})U(\three{X}s(U(\three{Y}))).
\end{align}
Thus,
\begin{align*}
    (\Gamma^U{})^{-1}\ast\Gamma^U{}(X, Y)=&U^{-1}(\one{X}\one{Y})\Gamma^{-1}(\two{X}, \two{Y})U(\three{X}s(U(\three{Y}))))\\
    &U^{-1}(\four{X}t(U^{-1}(\four{Y})))\Gamma(\five{X}, \five{Y})U(\six{X}\six{Y})\\
    =&U^{-1}(\one{X}\one{Y})\Gamma^{-1}(\two{X}t(U(\three{Y})), \two{Y})U(\three{X})\\
    &U^{-1}(\four{X})\Gamma(\five{X}s(U^{-1}(\four{Y})), \five{Y})U(\six{X}\six{Y})\\
    =&U^{-1}(\one{X}\one{Y})\Gamma^{-1}(\two{X}t(U(\three{Y})), \two{Y})\Gamma(\three{X}s(U^{-1}(\four{Y})), \five{Y})U(\four{X}\six{Y})\\
    =&U^{-1}(\one{X}\one{Y})\Gamma^{-1}(\two{X}, \two{Y})\Gamma(\three{X}s(U(\three{Y}))s(U^{-1}(\four{Y})), \five{Y})U(\four{X}\six{Y})\\
    =&U^{-1}(\one{X}\one{Y})\Gamma^{-1}(\two{X}, \two{Y})\Gamma(\three{X}, \three{Y})U(\four{X}\four{Y})=\varepsilon(XY).
\end{align*}
One can similarly check that $(\Gamma^U{})^{-1}\ast\Gamma^U{}(X, Y)=\varepsilon(X Y)$.  Checking the 2-cocycle condition is slightly different. On the one hand,
\begin{align*}
    \Gamma^U{}(X, &s(\Gamma^U{}(\one{Y}, \one{Z}))\two{Y}\two{Z})\\
    =&U^{-1}(\one{X}t\bigg(U^{-1}\Big(s\big(U^{-1}(\one{Y}t(U^{-1}(\one{Z})))\Gamma(\two{Y}, \two{Z})U(\three{Y}\three{Z})\big)\four{Y}\four{Z}\Big)\bigg))\\
    &\Gamma(\two{X}, \five{Y}\five{Z})U(\three{X}\six{Y}\six{Z})\\
    =&U^{-1}(\one{X}t\bigg(\big(U^{-1}(\one{Y}t(U^{-1}(\one{Z})))\Gamma(\two{Y}, \two{Z})U(\three{Y}\three{Z})\big)U^{-1}\Big(\four{Y}\four{Z}\Big)\bigg))\\
    &\Gamma(\two{X}, \five{Y}\five{Z})U(\three{X}\six{Y}\six{Z})\\
    =&U^{-1}(\one{X}t\bigg(\big(U^{-1}(\one{Y}t(U^{-1}(\one{Z})))\Gamma(\two{Y}, \two{Z})\big)\bigg))\Gamma(\two{X}, \three{Y}\three{Z})U(\three{X}\four{Y}\four{Z})\\
    =&U^{-1}(\one{X}t(\Gamma(\two{Y}, \two{Z}))t\big((U^{-1}(\one{Y}t(U^{-1}(\one{Z})))\big))\Gamma(\two{X}, \three{Y}\three{Z})U(\three{X}\four{Y}\four{Z})\\
    =&U^{-1}(\one{X}t\big((U^{-1}(\one{Y}t(U^{-1}(\one{Z})))\big))\Gamma(\two{X}, s(\Gamma(\two{Y}, \two{Z}))\three{Y}\three{Z})U(\three{X}\four{Y}\four{Z}).
\end{align*}
On the other hand,
\begin{align*}
    \Gamma^U{}(s(\Gamma^U{}(&\one{X}, \one{Y}))\two{X}\two{Y}, Z)\\
    =&U^{-1}(\one{X}t(U^{-1}(\one{Y})))\Gamma(\two{X}, \two{Y}) U(\three{X}\three{Y})U^{-1}(\four{X}\four{Y}t(U^{-1}(\one{Z})))\\
    &\Gamma(\five{X}\five{Y}, \two{Z})U(\six{X}\six{Y}\three{Z})\\
    =&U^{-1}(\one{X}t(U^{-1}(\one{Y}t(U^{-1}(\one{Z})))))\Gamma(\two{X}, \two{Y}) U(\three{X}\three{Y})U^{-1}(\four{X}\four{Y})\\
    &\Gamma(\five{X}\five{Y}, \two{Z})U(\six{X}\six{Y}\three{Z})\\
    =&U^{-1}(\one{X}t\big(U^{-1}(\one{Y}t(U^{-1}(\one{Z})))\big))\Gamma(s(\Gamma(\two{X}, \two{Y}))\three{X}\three{Y}, \two{Z})U(\four{X}\four{Y}\three{Z}),
\end{align*}
where the second step uses $U(Xs(b))=U(Xt(b))$ and $\Gamma(X, Ys(b))=\Gamma(X, Yt(b))$ repeatedly. This equates to the expression for $\Gamma^U{}(X, s(\Gamma^U{}(\one{Y}, \one{Z}))\two{Y}\two{Z})$ using that $\Gamma$ is a 2-cocycle. One can similarly check the other part of the cocycle condition. 
 Finally, we check that we have a group action of $C^1(\CL, B)$. Thus, for any $U,V\in C^1(\CL, B)$, 
 \begin{align*}
    &\Gamma^{U\ast V}(X,Y)\\
    &= V^{-1}(\one{X})U^{-1}(\two{X}\,t(V^{-1}(Y\o)\, U^{-1}(Y\t)))\Gamma(X\th\,Y\th)U(X\fo\,Y\fo)V(X\fiv\,Y\fiv)\\
    &=V^{-1}(\one{X})U^{-1}(\two{X}\, t(U^{-1}(Y\t))\,t(V^{-1}(Y\o)))\Gamma(X\th\,Y\th)U(X\fo\,Y\fo)V(X\fiv\,Y\fiv)\\
    &=V^{-1}(\one{X})U^{-1}(\two{X}\, t(U^{-1}(Y\t))\,s(V^{-1}(Y\o)))\Gamma(X\th\,Y\th)U(X\fo\,Y\fo)V(X\fiv\,Y\fiv)\\
    &=V^{-1}(\one{X}\,t(V^{-1}(Y\o)))U^{-1}(\two{X}\, t(U^{-1}(Y\t)))\Gamma(X\th\,Y\th)U(X\fo\,Y\fo)V(X\fiv\,Y\fiv)=(\Gamma^U){}^{V}(X,Y)
 \end{align*} for all $X,Y$ in $\CL$.
 \end{proof}
 
 The identification of $Z^1(\CL,B)$ in part (1) also gives this as identified with $\CB^R_{ver}(\CL)$ as a monoid. Hence it is group if $\CL$ is a left or anti-left Hopf algebroid by the discussion after Definition~\ref{def. right bisection of Hopf algebroids}. 
 
\begin{prop}\label{propisomGamma} Let $\CL$ be a  left bialgebroid and $U\in C^1(\CL,B)$. Then
\begin{align*} \Ad_U:\CL\to \CL^{\partial U},\quad
        \Ad_{U}(X)=s(U(\one{X}))t(U^{-1}(\three{X}))\two{X}
    \end{align*}
is an isomorphism of left bialgebroids and  is an automorphism of $\CL$ if $U\in Z^1(\CL,B)$.
 
 More generally, if $\Gamma\in Z^2(\CL,B)$ is equivalent to $\Gamma^U$  then the same map $\Ad_U$ gives an isomorphism $\Ad_{U}:\cL^{\Gamma}\to\cL^{\Gamma^U}$ of left bialgebroids. If $\CL$ is a  left  (resp. anti-left) Hopf algebroid then so are $\CL^\Gamma$ and $\CL^{\Gamma^U}$,  and $\Ad_U$ is an isomorphism as such. \end{prop}
\begin{proof} The first part is a special case of the general statement on setting $\Gamma$ trivial. For the latter, we have 
    \begin{align*}
        \Ad_{U}(X)&\cdot_{\Gamma^U{}}\Ad_{U}(Y)\\
        =&s\Big(U^{-1}\big(s(U(X\o))X\t s(U^{-1}(s(U(Y\o))Y\t ))\big)\Gamma(X\th  , Y\th  )U(X\fo  Y\fo  )\Big)\\
        &\quad t\Big(U^{-1}(X\si  Y\si  )\Gamma^{-1}(X\sev , Y\sev )U\big(t(U^{-1}(X\nin ))X\eig s(U(t(U^{-1}(Y\nin ))Y\eig ))\big)\Big)X\fiv Y\fiv \\
        =&s(\Gamma(X\o, Y\o)U(X\t Y\t ))t(U(X\fo  Y\fo  )\Gamma(X\fiv ,Y\fiv ))X\th  Y\th  =\Ad_{U}(X\cdot_{\Gamma}Y)
    \end{align*}
for all  $X, Y\in\cL$.    It is not hard to see that $\Ad_{U}$ is a $B$-bilinear map and also a coring map. The last part of the statement follows by   \cite[Thm. 3.7, Rem. 3.8]{HM22} that cotwisting preserves a left or anti-left Hopf algebroid structure.  
\end{proof}

\begin{cor} If $\CL$ is a left Hopf algebroid and $U\in Z^1(\CL,B)$  then $\Ad_{U}=\Ad^L_U$ is the crossed module map in Proposition~\ref{equ. left adjoint automorphism} when $U$ is viewed as a vertical left bisection. Similarly, if $\CL$ is an anti-left Hopf algebroid then $\Ad_U=\Ad_{ U^{-1}}^{R}$ when $U$ is viewed as a vertical right bisection. 
\end{cor}
\begin{proof} We have in the first case
    \begin{align*}
        \Ad_{U}(X)=&s(U(X\o))t(U^{-1}(X\th  ))X\t \\
        =&s(U(X\o))t(\varepsilon(X\th{}_{+}s(U(X\th{}_{-}))))X\t \\=&s(U(X\o))t(\varepsilon(X\t{}_{+}\t\,s(U(X\t{}_{-}))))X\t{}_{+}\o\\
        =&s(U(X\o))t(\varepsilon(X\t{}_{+}\t))X\t{}_{+}\o\,t(U(X\t{}_{-}))\\
        =&s(U(X\o))X\t{}_{+}\,t(U(X\t{}_{-}))=\Ad_{U}^{L}(X).
        \end{align*}
 In the second case, recalling (\ref{equ. right bisection automorphism}), we have
      \begin{align*}
          \Ad_{U^{-1}}(X)=&t(U(X\th  ))s(U^{-1}(X\o))X\t \\
          =&t(U(X\th  ))s(\varepsilon(X\o{}_{[+]}t(U(X\o{}_{[-]}))))X\t \\
        =&t(U(X\t ))s(\varepsilon(X\o{}_{[+]}\o\,t(U(X\o{}_{[-]}))))X\o{}_{[+]}\t\\
        =&t(U(X\t ))s(\varepsilon(X\o{}_{[+]}\o\,))X\o{}_{[+]}\t\,s(U(X\o{}_{[-]}))\\
        =&t(U(X\t ))X\o{}_{[+]}\,s(U(X\o{}_{[-]}))=\Ad_{U}^{R}(X).
      \end{align*}
Since $\partial U$ is trivial, $\Ad_{U}$ in either case is necessarily an automorphism  by Proposition~\ref{propisomGamma}.
\end{proof}

\section{Bisections and cocycles on ES-Hopf algebroids}\label{secesb}

Here we see how the results of Sections~\ref{secbisec} play out for the case of the Ehresmann-Schauenburg (ES)  Hopf algebroid $\CL(P,H)$
associated to a quantum principle bundle or Hopf-Galois extension. Vertical bisections on $\CL(P,H)$ were previously discussed in some form in \cite{HL20}. We check that similarly the group of all left bisections reduces as expected to a group $\Aut_H(P)$ of bundle automorphisms. Remarkably, we find that the right bisections  reduce to the same group.  We  also show that  $\CH^2(\CL,B)$ in this context reduces to a previous notion of non-Abelian cohomology when the quantum principal bundle is cleft, i.e. trivial in a extension theory sense.

\subsection{Bisections on $\CL(P,H)$}\label{sec:ESbisec}

We first recall the notion of a Hopf-Galois extension or quantum principal bundle as in \cite{BrzMa,brz-tr,Ma:dia,BegMa}, for example, and define its automorphism group.

\begin{defi} \label{def:hg}
Let $H$ be a Hopf algebra, $P$ an $H$-comodule algebra with coaction $\Delta_R$ and $B:= P^{co H}=\big\{b\in P ~|~ \Delta_R (b) = b \ot 1_H \big\} \subseteq
P$ the coinvariant subalgebra.
The extension $B\subseteq P$ is called  \textup{Hopf--Galois} if the
\textit{canonical map}
\begin{align*}
\chi := (m \ot \id) \circ (\id \ot _B \Delta_R ) :
P \ot _B P \longrightarrow P \ot H ,
\quad q \ot_B p  &\mapsto q \zero{p} \ot \one{p}
\end{align*}
is an isomorphism.
\end{defi}

Since the canonical map $\chi$ is left $P$-linear, its inverse is
determined by the restriction $\tau:=\chi^{-1}_{|_{1_P \ot H}}$, referred to as the \textit{translation map} and denoted
\begin{eqnarray*}
\tau:  H\to P\ot _B P, 
\quad h \mapsto \tau(h) = \tuno{h} \ot_B \tdue{h} \, .
\end{eqnarray*}
A Hopf-Galois extension is said to be faithfully flat if $P$ is a  faithfully flat left $B$-module. Then it is known, e.g. \cite[Prop.~3.6]{brz-tr} \cite[Lem.~34.4]{BW} that it obeys
\begin{align}\label{equ. translation map 1}
  \tuno{h} \ot_B \zero{\tdue{h}} \ot \one{\tdue{h}} &= \,\tuno{\one{h}} \ot_B \tdue{\one{h}} \ot
\two{h},\\
\label{equ. translation map 2}
~~ \tuno{\two{h}}  \ot_B \tdue{\two{h}} \ot S\one{h}&= \zero{\tuno{h}} \ot_B {\tdue{h}}  \ot \one{\tuno{h}},\\
\label{equ. translation map 3}
\tuno{h}\zero{\tdue{h}}\ot \one{\tdue{h}} &= 1_{P} \ot h,\\
\label{equ. translation map 4}
    \zero{p}\tuno{\one{p}}\ot_{B}\tdue{\one{p}} &= 1_{P} \ot_{B}p,\\
\label{equ. translation map 5}    \tuno{\one{h}}\ot_{B}\tdue{\one{h}}\tuno{\two{h}}\ot_{B}\tdue{\two{h}} &=\tuno{h}\ot_{B}1\ot_{B}\tdue{h}
\end{align}
for all $h\in H$ and $p\in P$.

\begin{defi}\label{def:autP} Let $B=P^{co H}\subseteq P$ be a faithfully flat Hopf--Galois  extension. The {\em bundle automorphism group} $\Aut_H(P)$ is the collection
of invertible right $H$-comodule unital algebra maps of $P$, with composition product. 
\end{defi}
Note that $F\in \Aut_H(P)$ restricts to $F|_B\in \Aut(B)$, and conversely $F\in \Aut_H(P)$ if and only if it is an $H$-colinear unital algebra map with invertible $F|_B$, because
 \[
F^{-1}(p) =(F|_{B})^{-1} (\zero{p} F (\tuno{\one{p}}) )\, \tdue{\one{p}}
\]
for all $p\in P$ provides the inverse. This can be checked using (\ref{equ. translation map 1})-(\ref{equ. translation map 5}), see \cite{HL20}. 
The case where $F|_B=\id$ is called a {\em vertical bundle automorphism} or `gauge transformation'. This provides noncommutative  or `quantum' versions of the classical notions recalled in Section~\ref{sec:preclass}. In this context, it is known from examples that it can be too strong to demand that $F$ is an algebra map and hence one has a notion of extended gauge transformations, where this is replaced by $F$ unital and invertible \cite{Ma:dia,Ma:book}.

\begin{defi}\label{def:ec}
Let $B=P^{co H}\subseteq P$ be a faithfully flat Hopf-Galois extension. The \textup{Ehresmann-Schauenburg Hopf algebroid} is
\begin{equation*}
\cL(P, H)=(P\ot P)^{co H} := \{p\ot  q\in P\ot  P \, \quad|\quad \zero{p}\ot  \zero{q}\ot  \one{p}\one{q}=p\ot  q\ot  1_H \} \label{ec2}
\end{equation*}
 with $B$-bimodule structure inherited from $P$ and $B$-coring coproduct and counit
 \[
\Delta(p\otimes q)=\zero{p}\otimes \one{p}\tuno{}\ot_{B}p\o\tdue{}\otimes q,\quad  \varepsilon(p\otimes q)=pq.
\]
Moreover, $\cL(P, H)$ is a $B^e$-ring with the product and unit
\[ (p\otimes q)\cdot_{\mathcal{L}}(r\otimes u)=pr\otimes uq,\quad \eta(b\tens c)=b\tens c
\]
for all $p\otimes q$, $r\otimes u \in \cL(P, H)$ and $b\tens c\in B^e$. Here $s(b)=b\otimes 1$ and $t(b)=1\otimes b$. \end{defi}
The coinvariance condition in the definition of $\CL$ is equivalent to
\[ \zero{p}\otimes \one{p}\tuno{}\ot_{B}p\o\tdue{} q=p\otimes q\otimes_B 1\]
used in the original formulation \cite{schau1}\cite[\S 34.13]{BW}. That we have a left Hopf algebroid was shown in \cite{HM22}, with
\begin{align}
     \lambda(1\ot_{B}(p\tens q))= (p\ot q)_{+}\ot_{B^{op}}(p\ot q)_{-}=p\ot\tdue{\one{q}}\ot_{B^{op}}\zero{q}\ot\tuno{\one{q}}
\end{align}
for all $p\ot q\in\cL(P, H)$. 

\begin{prop}\label{propgauge} $\CB(\CL(P,H))\simeq \Aut_H(P)\simeq \CB^R(\CL(P,H))$ as groups, with the latter requiring $H$ to have an invertible antipode.
\end{prop}
\begin{proof} (1) The first isomorphism is a slight generalisation of \cite{HL20}. As there, given a left bisection $\sigma\in \CB(\cL(P, H))$,  we can construct a gauge transformation by
    \begin{align}\label{equ. left bisection to gauge transformation}
        F_{\sigma}(p):=\sigma(\zero{p}\ot\tuno{\one{p}})\tdue{\one{p}}
    \end{align}
for all $p\in P$. Conversely, for any gauge transformation $F$, we can construct a left bisection by
\begin{align}
    \sigma_{F}(p\ot q):=F(p)q
\end{align}
for  all $p\tens q\in\cL(P,H)$. One can also check that the group structures are compatible, i.e. $\sigma_{F}\ast\sigma_{G}=\sigma_{F\circ G}$ for any $F,G\in\Aut_{H}(P)$. In particular, the inverse of a left bisection 
\begin{align}
        \sigma^{-1}(p\ot q)=(\sigma\circ s)^{-1}(p\sigma(\zero{q}\ot\tuno{\one{q}})\tdue{\one{q}})
    \end{align}
 using Theorem~\ref{thm. group of left bisections} corresponds to the inverse of $F_\sigma$ using the observation below Definition~\ref{def:autP}.

(2) Given a right bisection $\tau\in \CB^R(\cL(P, H))$, and provided $H$ has an invertible antipode, we  construct a gauge transformation by
    \begin{align}\label{equ. right bisection to gauge transformation}
        F_{\tau}(p):=\tuno{S^{-1}(\one{p})}\tau(\tdue{S^{-1}(\one{p})}\ot\zero{p})
    \end{align}
for all $p\in P$. Conversely, for any gauge transformation $F$, we can construct a right bisection
\begin{align}\label{equ. gauge transformation to right bisection}
    \tau_{F}(p\ot q):=pF(q)
\end{align}
for all $p\ot q\in \cL(P, H)$. Moreover, for any $F\in \Aut_{H}(P)$,
\begin{align*}
    F_{\tau_{F}}(p)=&\tuno{S^{-1}(\one{p})}\tdue{S^{-1}(\one{p})}F(\zero{p})=F(p).
\end{align*}
Conversely, for any right bisection $\tau$,
\begin{align*}
    \tau_{F_{\tau}}(p\ot q)=&p\tuno{S^{-1}(\one{q})}\tau(\tdue{S^{-1}(\one{q})}\ot\zero{q})=\zero{p}\tuno{\one{p}}\tau(\tdue{\one{p}}\ot q)=\tau(p\ot q),
\end{align*}
where the 2nd step uses that $\zero{p}\ot\one{p}\ot q=p\ot S^{-1}(\one{q})\ot\zero{q}$. Moreover,
\begin{align*}
    (\tau_{F}\bullet\tau_{G})(p\ot q)=&\tau_{F}(t\circ\tau_{G}(\tdue{\one{p}}\ot q)(\zero{p}\ot \tuno{\one{p}}))\\
    =&\tau_{F}(\zero{p}\ot \tuno{\one{p}}\tdue{\one{p}}G(q))\\
    =&\tau_{F}(p\ot G(q))=pF\circ G(q)=\tau_{F\circ G}(p\ot q)
\end{align*}
for $F, G\in \Aut_{H}(P)$. The bijection implies that $\CB^R(\CL(P,H))$ with $\bullet$ is also a group,  isomorphic to $\Aut_H(P)$. Explicitly, given a right bisection $\tau$, its inverse can be given by
    \begin{align*}
        \tau^{-1}(p\ot q)=(\tau\circ t)^{-1}(p\tau(\zero{q}\ot\tuno{\one{q}})\tdue{\one{q}}).
    \end{align*}
\end{proof}

That $\CB^R(\CL(P,H))$ is a group agrees with a recent result \cite{HS25} that $\CL(P,H)$ is an anti-left Hopf algebroid when $H$ has invertible antipode. The group crossed modules associated to left and right bisections likewise reduce to a group crossed module $\mu:\Aut_H(P)\to \Aut(\CL(P,H))$. Explicitly, this sends $F\in \Aut_H(P)$ to $(\Phi_F,\phi_F)$,  where
\[ \Phi_F(p\tens q)= F(p) \tens F(q),\quad \phi_F=F|_B. \]
Moreover, any automorphism $(\Phi,\phi)$ sends $F$ to $((\Phi, \phi)\la F)(p)=\phi\circ\sigma_{F}\circ\Phi^{-1}(\zero{p}\ot\tuno{\one{p}})\tdue{\one{p}}$.

We next study the subgroup $\CB_{ver}(\CL(P,H)))^{op}=\CB^R_{ver}(\CL(P,H)))$ of vertical bisections or `gauge transformations' of the quantum principal bundle. Note that the isomorphism in Proposition~\ref{propgauge}  when restricted to the vertical bisections becomes inversion. Since $\CL(P,H)$ is a left Hopf algebroid, we know from Theorem~\ref{thm: boundary map} and the comment after it that the group of vertical right bisections coincides with $Z^1(\CL(P,H),B)$.

\begin{prop}\label{lem. vertical bisection of es bialgebroid} Let $B=P^{co H}\subseteq P$ be a faithfully flat Hopf-Galois extension. Then $Z^{1}(\cL(P, H), B)\simeq\tZ^1(H,P)$ as groups, where the latter is defined as the set of unital $H$-colinear maps $f:H\to P$ obeying
\[  p_{\scriptscriptstyle{(0)}}q_{\scriptscriptstyle{(0)}}f(p\o q\o )=p_{\scriptscriptstyle{(0)}}f(p\o )q_{\scriptscriptstyle{(0)}}f(q\o )\]
for all $p,q\in P$, where $H$ is a  right $H$-comodule by its right adjoint coaction. This forms a group under  convolution product, with inverse
\[  f^{-1}(h)=\tuno{h\t }f(Sh\o )\tdue{h\t }.\]
\end{prop}
\begin{proof}
    Given $\sigma\in Z^{1}(\cL(P, H), B)$ in the form of a right bisection, we define $f\in \tZ^1(H,P)$ by
    \begin{align}
        f_{\sigma}(h):=\tuno{h\o }\sigma(\tdue{h\o }\ot \tuno{h\t })\tdue{h\t }, 
    \end{align}
 which is well defined as $\sigma$ is $B$-bilinear. By  applying (\ref{equ. translation map 1}) and (\ref{equ. translation map 2}), one has that this is a comodule map,
\begin{align*}
    f_{\sigma}(h)_{\scriptscriptstyle{(0)}}\ot f_{\sigma}(h)\o =f_{\sigma}(h\t )\ot (Sh\o )h\th  .
\end{align*}
Moreover, by using $\sigma(XY)=\sigma(X s\circ\sigma(Y))$, we can get $p_{\scriptscriptstyle{(0)}}q_{\scriptscriptstyle{(0)}}f_{\sigma}(p\o q\o )=p_{\scriptscriptstyle{(0)}}f_{\sigma}(p\o )q_{\scriptscriptstyle{(0)}}f_{\sigma}(q\o )$. Indeed,
\begin{align*}
    p_{\scriptscriptstyle{(0)}}q_{\scriptscriptstyle{(0)}}f_{\sigma}(p\o q\o )=&\sigma(p_{\scriptscriptstyle{(0)}}q_{\scriptscriptstyle{(0)}}\ot \tuno{q\o }\tuno{p\o })\tdue{p\o }\tdue{q\o }\\
    =&\sigma(p_{\scriptscriptstyle{(0)}}\ot\sigma(q_{\scriptscriptstyle{(0)}}\ot \tuno{q\o })\tuno{p\o })\tdue{p\o }\tdue{q\o }\\
    =&\sigma(p_{\scriptscriptstyle{(0)}}\ot\tuno{p\o })\tdue{p\o }\sigma(q_{\scriptscriptstyle{(0)}}\ot \tuno{q\o })\tdue{q\o }\\
    =&p_{\scriptscriptstyle{(0)}}f_{\sigma}(p\o )q_{\scriptscriptstyle{(0)}}f_{\sigma}(b\o ).
\end{align*}
Conversely, if $f\in \tZ^{1}(H, P)$, we can define $\sigma_{f}\in Z^{1}(\cL(P, H), B)$ by
\begin{align}
    \sigma_{f}(p\ot q):=p_{\scriptscriptstyle{(0)}}f(p\o )q.
\end{align}
Using that $f$ is a comodule map, one can check that the image of $\sigma_{f}$ belongs to $B$ and  that it is $B$-bilinear.  Next, for $X=p\ot p'$, $Y=q\ot q'$, we have
\begin{align*}
    \sigma_{f}(XY)=p_{\scriptscriptstyle{(0)}}q_{\scriptscriptstyle{(0)}}f(p\o q\o )q'p'=p_{\scriptscriptstyle{(0)}}f(p\o )q_{\scriptscriptstyle{(0)}}f(q\o )q'p'=\sigma_{f}(X t\circ\sigma_{f}(Y)).
\end{align*}
One can also check that $\sigma_{f}(Xt\circ\sigma_{f}(Y))=\sigma_{f}(Xs\circ\sigma_{f}(Y))$ by using  that $p_{\scriptscriptstyle{(0)}}bf(p\o )=p_{\scriptscriptstyle{(0)}}f(p\o )b$ for all $b\in B$.
We also have
\begin{align*}
    \sigma_{f_{\sigma}}(p\ot q)=p_{\scriptscriptstyle{(0)}}\tuno{p\o }\sigma(\tdue{p\o }\ot \tuno{p\t })\tdue{p\t }q=\sigma(p\ot q)
\end{align*}
and $f_{\sigma_{f}}(h)=f(h)$. Moreover, one can check that $\sigma_{f\ast g}(X)=\sigma_{f}\bullet\sigma_{g}(X)$ for the product of vertical right bisections. Finally, it is straightforward to check that $\sigma_f^{-1}$ corresponds to the formula stated for  the inverse in $\tZ^1(H,P)$. \end{proof}

Combining  Propositions~\ref{propgauge} and~\ref{lem. vertical bisection of es bialgebroid}, we also have a group isomorphism between vertical bundle automorphisms or `gauge transformations' and $\tZ^{1}(H, P)$.  Given a vertical bundle automorphism $F$, the corresponding $f\in \tZ^{1}(H, P)$ is given by
\[ f(h):=\tuno{h}F(\tdue{h})\]
for all $h\in H$. Conversely, if $f\in \tZ^{1}(H, P)$ then $F(a):=a_{\scriptscriptstyle{(0)}}f(a\o )$ is a vertical gauge transformation. This is in keeping with the classical geometry where equivariant functions on the total space correspond to gauge transformations. More generally, we have:

\begin{prop}\label{prop. tc} Let $B=P^{co H}\subseteq P$ be a faithfully flat Hopf-Galois extension. Then $C^1(\cL(P,H)),B)\simeq\tC^1(H,P)$ as groups, where the latter is  defined as the set of unital convolution invertible
$H$-colinear maps $f: H\to P$, such that
\begin{align*}    p_{\scriptscriptstyle{(0)}}bf(p\o )=p_{\scriptscriptstyle{(0)}}f(p\o )b
\end{align*}
 for all $p\in P$ and $b\in B$, and forms a group with the convolution product.
\end{prop}
We omit the proof as it follows the same lines as the proof of Proposition~\ref{lem. vertical bisection of es bialgebroid}. This recovers the correspondence in  \cite[Prop. 2.2]{Ma:dia} between such $f$ and extended gauge transformations.

\subsection{2-cocycles on $\CL(P,H)$ in the $\ra$-commutative case}\label{sec:EScocy}

Here, we provide limited results where we suppose that the algebra $P$ of a Hopf-Galois extension is also a right $H$-module algebra with action $\ra$ such that
\[ pq=\zero{q}(p\ra\one{q})\]
for all $p, q\in P$, in which case we say that $P$ is $\ra$-commutative. This is `braided commutative' in the special case where the action and coaction fit together to form a Drinfeld-Yetter module as in Section~\ref{secact}, but this further structure will not be needed here. In particular, our assumption implies that $B=P^{co H}$ is contained in the centre of $P$, which implies for example that $\tC^1(H,P)$ in Proposition~\ref{prop. tc} is just all convolution-invertible $H$-colinear maps as the displayed condition becomes empty.

\begin{lem}\label{proptZ2} Let $B=P^{co H}\subseteq P$ be a faithfully flat Hopf-Galois extension and suppose that $P$ is also a right $H$-module algebra such that $P$ is $\ra$-commutative. Then $Z^{2}(\cL(P, H), B)$ is in one to one correspondence with the set  $\tZ^2(H,P)$ defined as elements $\gamma\in Z^2_{\ra,as}(H,P)$ that are $H$-colinear in the sense
\[ \gamma(h, g)_{\scriptscriptstyle{(0)}}\ot \gamma(h, g)\o =\gamma(h\t , g\t )\ot S(g\o h\o )g\th  h\th  .\]
\end{lem}
\begin{proof}
    Let $\gamma\in \tZ^2(H,P)$ and define
    \begin{align}
        \Gamma_{\gamma}(p\ot p', q\ot q')=p_{\scriptscriptstyle{(0)}}q_{\scriptscriptstyle{(0)}}\gamma(q\o , p\o )q'p'
    \end{align}
    for all $p\ot p', q\ot q'\in \cL(P, H)$. One can check that $\Gamma_{\gamma}$ is a $B$-bimodule map and that its image belongs to $B$. Moreover, it is well defined over $\ot_{B^{e}}$. To check the 2-cocycle condition, let $X=p\ot p'$, $Y=q\ot q'$ and $Z=r\ot r'$. Then on the one hand,
\begin{align*}
    \Gamma_{\gamma}&(X, s(\Gamma_{\gamma}(\one{Y}\one{Z}))\two{Y}\two{Z})\\
    &=\Gamma_{\gamma}(p\ot p', \zero{q}\zero{r}\gamma(\one{r},\one{q})\tuno{\two{r}}\tuno{\two{q}}\tdue{\two{q}}\tdue{\two{r}}\ot r'q')\\
    &=\Gamma_{\gamma}(p\ot p', \zero{q}\zero{r}\gamma(\one{q},\one{r})\ot r'q')\\    &=\zero{p}\zero{q}\zero{r}\gamma(\one{r}, \one{q})\gamma(q\t r\t , p)r'q'p'.
\end{align*}
On the other hand,
    \begin{align*}
        \Gamma_{\gamma}&(s(\Gamma_{\gamma}(\one{X}\one{Y}))\two{X}\two{Y}, Z)\\
        &=\zero{p}\zero{q}\gamma(q\o , p\o )r_{\scriptscriptstyle{(0)}}\gamma(r\o ,p\t q\t )r'q'p'\\
        &=\zero{p}\zero{q}\zero{r}\gamma(q\o , p\o )\ra r\o \gamma(r\t ,p\t q\t )r'q'p',
    \end{align*}
where the last step uses that $P$ is $\ra$-commutative. The two expressions are equal by $\gamma$ a cocycle. 
Conversely, if $\Gamma\in C^{1}(\cL(P, H), B)$, define $\gamma_{\Gamma}\in \tZ^{2}(H, P)$ by
\begin{align}
    \gamma_{\Gamma}(h, g):=\tuno{h\o }\tuno{g\o }\Gamma(\tdue{g\o }\ot\tuno{g\t }, \tdue{h\o }\ot\tuno{h\t })\tdue{g\t }\tdue{h\t },
\end{align}
which is well defined over all the balanced tensor products since $\Gamma$ is $B$-bilinear and defined over $\ot_{B^{e}}$. It is  $H$-colinear by using the same method as in Lemma \ref{lem. vertical bisection of es bialgebroid}. Using $\gamma_{\Gamma\star \Lambda}=\gamma_{\Gamma}\ast\gamma_{\Lambda}$, one can also see that $\gamma_{\Gamma}$ is  invertible with inverse given by $\gamma_{\Gamma^{-1}}$. To check the 2-cocycle condition, on the one hand,
\begin{align*}
    \gamma_{\Gamma}&(h\o , g\o )\gamma_{\Gamma}(g\t h\t , f)\\
    &=\tuno{h\o }\tuno{g\o }\Gamma(\tdue{g\o }\ot\tuno{g\t }, \tdue{h\o }\ot\tuno{h\t })\tuno{f\o }\\
    &\quad \Gamma(\tdue{f\o }\ot\tuno{f\t }, \tdue{g\t }\tdue{h\t }\ot\tuno{h\th  }\tuno{g\th  })\tdue{f\t }\tdue{g\th  }\tdue{h\th  }\\
    &=\tuno{h\o }\tuno{g\o }\tuno{f\o }\Gamma(\tdue{g\o }\ot\tuno{g\t }, \tdue{h\o }\ot\tuno{h\t })\\
    &\quad \Gamma(\tdue{f\o }\ot\tuno{f\t }, \tdue{g\t }\tdue{h\t }\ot\tuno{h\th  }\tuno{g\th  })\tdue{f\t }\tdue{g\th  }\tdue{h\th  },
\end{align*}
and on the other hand,
\begin{align*}
    \gamma_{\Gamma}&(g\o , f\o )\ra h\o  \gamma_{\Gamma}(h\t , f\t g\t )\\
    &=\big(\tuno{g\o }\tuno{f\o }\Gamma(\tdue{f\o }\ot\tuno{f\t }, \tdue{g\o }\ot\tuno{g\t })\tdue{f\t }\tdue{g\t }\big)\ra h\o \\
    &\quad\tuno{h\t }\tuno{g\th  }\tuno{f\th  }\Gamma(\tdue{f\th  }\tdue{g\th  }\ot\tuno{g\fo  }\tuno{f\fo  }, \tdue{h\t }\ot\tuno{h\th  } )\tdue{f\fo  }\tdue{g\fo  }\tdue{h\th  }\\
    &=\zero{\tuno{h\t }}\big(\tuno{g\o }\tuno{f\o }\Gamma(\tdue{f\o }\ot\tuno{f\t }, \tdue{g\o }\ot\tuno{g\t })\tdue{f\t }\tdue{g\t }\big)\ra (h\o \tuno{h\t }{}\o )\\
    &\quad\tuno{g\th  }\tuno{f\th  }\Gamma(\tdue{f\th  }\tdue{g\th  }\ot\tuno{g\fo  }\tuno{f\fo  }, \tdue{h\t }\ot\tuno{h\th  } )\tdue{f\fo  }\tdue{g\fo  }\tdue{h\th  }\\
    &=\tuno{h\th  }\big(\tuno{g\o }\tuno{f\o }\Gamma(\tdue{f\o }\ot\tuno{f\t }, \tdue{g\o }\ot\tuno{g\t })\tdue{f\t }\tdue{g\t }\big)\ra (h\o S h\t )\\
    &\quad\tuno{g\th  }\tuno{f\th  }\Gamma(\tdue{f\th  }\tdue{g\th  }\ot\tuno{g\fo  }\tuno{f\fo  }, \tdue{h\t }\ot\tuno{h\th  } )\tdue{f\fo  }\tdue{g\fo  }\tdue{h\th  }\\
    &=\tuno{h\o }\tuno{g\o }\tuno{f\o }\Gamma(\tdue{f\o }\ot\tuno{f\t }, \tdue{g\o }\ot\tuno{g\t })\\
    &\quad\Gamma(\tdue{f\t }\tdue{g\t }\ot\tuno{g\th  }\tuno{f\th  }, \tdue{h\o }\ot\tuno{h\t } )\tdue{f\th  }\tdue{g\th  }\tdue{h\t }, 
\end{align*}
where the second step uses that $P$ is $\ra$-commutative. The two are equal by $\Gamma$ a 2-cocycle. By using that $\gamma_{\Gamma}$ is colinear, one can also see that
\begin{align*}
    \gamma_{\Gamma}(h\o , g\o )a\ra(g\t h\t )=(b\ra (\one{g}\one{h})) \gamma_{\Gamma}(\two{h}, \two{g}).
\end{align*}
Finally, it is not hard to check that $\Gamma_{\gamma_{\Gamma}}=\Gamma$ and $\gamma_{\Gamma_{\gamma}}=\gamma$. \end{proof}

\begin{rem}\label{rem:Pgamma}
If $P$ is an $H$-comodule algebra and $\ra$-commutative, and $\gamma\in \tZ^2(H,P)$, then we can define a new unital algebra $P^\gamma$ with the same unit and a modified product on the underlying vector space of $P$,
\begin{align}
    p\cdot_{\gamma}q=p_{\scriptscriptstyle{(0)}}q_{\scriptscriptstyle{(0)}}\gamma(q\o , p\o )
\end{align}
for all $p,q\in P$.  The new product is associative using the 2-cocycle condition for $\gamma$. 
    \end{rem}

\begin{thm}\label{thm:ESZ2}
Let $B=P^{co H}\subseteq P$ be a faithfully flat Hopf-Galois extension and $P$ be $\ra$-commutative.    For any $f\in \tC^1(H,P)$, we obtain a 2-cocycle $\partial f\in \tZ^2(H,P)$ by
    \begin{align}
        \partial f(h, g)=  (f^{-1}(g\o )\ra h\o) f^{-1}(h\t ) f(g\t h\th)
    \end{align}
for all $h,g\in H$. The subset  of elements $f\in \tC^1(H,P)$ for which $\partial f(h,g)=\eps(h)\eps(g)$ for all $h,g\in H$, is a subgroup isomorphic to $\tZ^1(H,P)$. Moreover, if $\gamma\in \tZ^2(H,P)$ and $f\in \tC^1(H,P)$ then
\begin{align}
    \gamma^{f}(h,g):=( f^{-1}(g\o )\ra h\o) f^{-1}(h\t )\gamma(h\t , g\t ) f(g\th  h\th  )
\end{align}
also belongs to $\tZ^2(H,P)$ and gives an action of the group $\tC^1(H,P)$ on this set. The set $\tilde\CH^{2}(H, P)$ of orbits under this action can be identified with  $\H^{2}(\cL(P, H), B)$.
\end{thm}
\begin{proof}
It is easy to see that if $f\in \tC(H,P)$ obeys $f(hg)=f(g\o)(f(h)\ra g\t)$ for all $g,h\in H$ then $f\in \tZ^1(H,P)$. Conversely, if $f\in \tZ^1(H,P)$, we know that there is $\sigma\in Z^{1}(\cL(P, H), B)$, such that $f=f_{\sigma}$ by Proposition \ref{lem. vertical bisection of es bialgebroid}. Then,
\begin{align*}
 f(hg)=&   f_{\sigma}(hg)=\tuno{g\o }\tuno{h\o }\sigma(\tdue{h\o }\tdue{g\o }\ot \tuno{g\t }\tuno{h\t })\tdue{h\t }\tdue{g\t }\\
    =&\tuno{g\o }\tuno{h\o }\sigma(\tdue{h\o }\sigma(\tdue{g\o }\ot \tuno{g\t })\ot\tuno{h\t })\tdue{h\t }\tdue{g\t }\\
    =&\tuno{g\o }\sigma(\tdue{g\o }\ot \tuno{g\t })\tuno{h\o }\sigma(\tdue{h\o }\ot\tuno{h\t })\tdue{h\t }\tdue{g\t }\\
    =&\tuno{g\o }\sigma(\tdue{g\o }\ot \tuno{g\t })f_{\sigma}(h)\tdue{g\t }\\
    =&\tuno{g\o }\sigma(\tdue{g\o }\ot \tuno{g\t })\tdue{g\t }\zero{} (f_{\sigma}(h)\ra\tdue{g\t }\o)\\
    =&\tuno{g\o }\sigma(\tdue{g\o }\ot \tuno{g\t })\tdue{g\t } (f_{\sigma}(h)\ra g\th)\\
    =&f_{\sigma}(g\o) (f_{\sigma}(h)\ra g\t) =f(g\o) (f(h)\ra g\t).
\end{align*}
Hence, $f\in \tZ^1(H,P)$ if and only if $f(hg)=f(g\o)(f(h)\ra g\t)$. Moreover, it is easy to see that if $f\in \tC(H, P)$ obeys $f(hg)=f(g\o)(f(h)\ra g\t)$ for all $h,g\in H$ then $\partial f(h,g)=\varepsilon(h)\varepsilon(g)$ for all $h,g\in H$. Conversely, if the latter holds then 
\begin{align*}
    f(g\o)(f(h)\ra g\t)=&f(g\o)(f(h\o)\ra g\t) \partial f(g\th, h\t)\\
    =&f(g\o)(f(h\o)\ra g\t)(f^{-1}(h\t)\ra g\th)f^{-1}(g\fo)f( h\th g\fiv) =f(hg).
\end{align*}
Next, by $\ra$-commutativity, we have
\begin{align*}
    (f(g)\ra h\o) f(h\t)=&f(h\t)\zero{}\, ((f(g)\ra h\o)\ra f(h\t)\o)
    =f(h\th)\,(f(g)\ra (h\o (Sh\t)
    h\fo))\\
    =&f(h\o)\,(f(g)\ra h\t), 
\end{align*}
from which one can check directly that $\gamma^{f}\in \tZ^{2}(H, P)$. Moreover, for any $f,\tilde f\in \tC^1(H,P)$, 
 \begin{align*}
     \gamma^{f\ast f'}(h,g)=&(f'^{-1}(g\o)\ra h\o)(f^{-1}(g\t)\ra h\t)\,f'^{-1}(h\th)\,f^{-1}(h\fo)\gamma(h\fiv,g\th)f(g\fo\,h\si)f'(g\fiv h\sev)\\
     =&(f'^{-1}(g\o)\ra h\o)\,f'^{-1}(h\t)\,(f^{-1}(g\t)\ra h\th)f^{-1}(h\fo)\gamma(h\fiv,g\th)f(g\fo\,h\si)f'(g\fiv h\sev)\\
     =&(\gamma^f){}^{f'}(h,g)
 \end{align*}
for all $h,g\in H$, hence we have a group action. For the last part, by Lemma \ref{proptZ2}, it is sufficient to show that $\gamma^{f}$ corresponds to the 2-cocycle $\Gamma_{\gamma}^{U_{f}}\in Z^{2}(\cL, B)$, where $U_{f}$ is given by Proposition~\ref{prop. tc} and $\Gamma_{\gamma}$ is given by Lemma \ref{proptZ2}. More precisely, by applying Theorem~\ref{thm: boundary map}, we can see for any $p\ot p'$ and $q\ot q'\in\cL(P, H)$ that
\begin{align*}
    \Gamma_{\gamma}^{U_{f}}(X, Y)=&U_{f}{}^{-1}(\one{X}t(U^{-1}(\one{Y})))\Gamma_{\gamma}(\two{X}, \two{Y})U_{f}(\three{X}\three{Y})\\
    =&p_{\scriptscriptstyle{(0)}}q_{\scriptscriptstyle{(0)}}f^{-1}(p\o )\ra q\o f^{-1}(q\t )\gamma(q\th  , p\t )f(p\th  q\fo  )q'p'= \Gamma_{\gamma^{f}}
\end{align*}
using Lemma \ref{proptZ2} and $\gamma^f$ as stated. \end{proof}

The theorem gives us a concrete description of $\H^2(\cL(P, H),B)$ for the ES-Hopf algebroid.
Note that we have not discussed $Z^2(\CL(P,H),B)$ and $\tilde Z^2(H,P)$ as groups; their structure is more complicated as one has to use a modified $P^\gamma$ as in Remark~\ref{rem:Pgamma} on application of the first cocycle (this is similar to the composition of Drinfeld cotwists of comodule algebras).

\subsection{2-cocycles on $\CL(P,H)$ in the  cleft extension case} \label{sec:EStriv}

Here we look at $\CL(P,H)$ in the trivial bundle or cleft extension case. By \cite[Lem.~4.5]{HM22}, this is isomorphic to  a {\em cocycle Hopf algebroid}  $B^e\#_\gamma H$ for a 2-cocycle $\gamma$. To keep things simple, we proceed with  trivial $\gamma$, in which case we have a Hopf algebroid $B^e\#H$ associated to a left $H$-module algebra $B$ and built on $B^e\tens H$
with product
\[(b\tens b'\# h)(c\tens c'\# g)=b(\one{h}\triangleright c)\ot c' (S\two{g}\triangleright  b')\# \two{h}\one{g}\]
and source and target maps
\[s(b)=b\ot1\# 1, \quad t(b)=1\ot b\# 1.\]
The coproduct and counit are
\[ \Delta (b\tens b'\# h)=(b\tens 1\# \one{h})\tens_B (1\tens b'\#\two{h}),\quad \eps(b\ot b'\# h)=b(\one{h}\triangleright b'),\]
where the $B$-bimodule structure is
\[ c. (b\ot b'\# h). c'=cb\ot b' (S\two{h}\triangleright c')\# \one{h}\]
for all $b,b',c,c'\in B$. We briefly examine how $C^1(\CL,B), Z^1(\CL,B), Z^2(\CL,B)$ and $\CH^2(\CL,B)$ reduce when applied to $\CL=B^e\# H$. To this end, we define
\[
Z^{1}_{\la,as}(H, B):=\{u: H\to B \quad\textrm{unital linear}\ |\  u(hg)=(h\o \la u(g))u(h\t ),  (h\o \la b)u(h\t )=u(h\o )(h\t \la b)\}
\]
\[
C^{1}_{\la,as}(H, B):=\{u: H\to B\quad\textrm{conv. inv. unital linear}\ | \  (h\o \la b)u(h\t )=u(h\o )(h\t \la b)\}
\]
with coboundary map
\[ \partial: C^{1}_{\la, as}(H, B)\to Z^{2}_{\la,as}(H, B),\quad \partial u(h, g)=u^{-1}(h\o )(h\t \la u^{-1}(g\o ))u(h\th  g\t ).\]
One can check that $Z^1_{\la,as}(H,B)$ is the subgroup of elements $u\in C^1_{\la, as}(H,B)$ for which $\partial u(h,g)=\eps(h)\eps(g)$ for all $h,g\in H$. Moreover, the image of $\partial$ obeys the condition in Definition~\ref{def: assoc type}. Also, if $\gamma\in Z^{2}_{\la,as}(H, B)$  then so is
\[\gamma^u(h, g)=u^{-1}(h\o )(h\t \la u^{-1}(g\o ))\gamma(h\th  , g\t )u(h\fo  g\th  ),\]
and it is straightforward to check that this gives a group action of $C^1_{\la,as}(H,B)$ on $Z^2_{\la,as}(H,B)$. We denote the set of orbits in $Z^{2}_{\la,as}(H, B)$ under this transformation by $\H^{2}_{\la,as}(H, B)$.

\begin{prop}\label{prop. BEH} $Z^1(B^{e}\#H, B)\simeq Z^1_{\la,as}(H, B)$, $ C^{1}(B^{e}\#H, B)\simeq C^{1}_{\la,as}(H, B)$ as groups. Moreover, $Z^{2}(B^{e}\#H, B)\simeq Z^{2}_{\la,as}(H, B)$ and  $\H^{2}(B^{e}\#H, B)\simeq \H^{2}_{\la,as}(H, B)$.
\end{prop}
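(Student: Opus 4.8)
The plan is to prove all four isomorphisms by transporting the abstract structures of Sections~\ref{secbisec}--\ref{seccoh} across a single explicit bijection at the level of $1$-cochains, and then checking compatibility with the group products, the coboundary map and the cotwisting transformation. Since $B^e\#H$ is a (left and anti-left) Hopf algebroid, the groups $Z^1$, $C^1$, the set $Z^2$ and the quotient $\H^2$ are all defined by the general theory, so what remains is a concrete computation. First I would treat $C^1$: given $U\in C^1(B^e\#H,B)$ set $u(h):=U(1\ot 1\#h)$, and conversely given $u$ set $U(b\ot b'\#h):=b(\one h\la b')u(\two h)$. One has $U(1\ot1\#h)=u(h)$ since $\one h\la 1=\eps(\one h)1$, and $U=\eps$ corresponds to $u=\eps$. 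Using the explicit product, coproduct and bimodule structure of $B^e\#H$, one checks that left $B^e$-linearity $U(s(b)t(c)X)=bU(X)c$ and the balancing condition $U(Xs(b))=U(Xt(b))$ of \eqref{eq1} hold automatically, the only nontrivial requirement being $U(t(c)X)=U(X)c$; after expanding $t(c)X=b\ot b'(S\two h\la c)\#\one h$ this reduces precisely to the associative-type identity $(\one k\la d)u(\two k)=u(\one k)(\two k\la d)$ defining $C^1_{\la,as}(H,B)$, and conversely this identity is forced.

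Reading off $\Delta(b\ot b'\#h)=(b\ot1\#\one h)\ot_B(1\ot b'\#\two h)$, the convolution $U\ast V(X)=U(\one X)V(\two X)$ of Lemma~\ref{lem. c1} becomes $b\,u(\one h)(\two h\la b')v(\three h)$, which the associative-type identity rewrites as $b(\one h\la b')(u\ast v)(\two h)$; hence the bijection is an isomorphism of groups with ordinary convolution on $C^1_{\la,as}(H,B)$ (and convolution-invertibility corresponds on both sides). Then $Z^1$ is the subgroup of $C^1$ of genuine vertical bisections, cut out by the extra multiplicativity $\sigma(XY)=\sigma(Xt(\sigma(Y)))$. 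Since $(1\ot1\#h)(1\ot1\#g)=1\ot1\#hg$ and $(1\ot1\#h)t(b)=1\ot b\#h$, evaluating this condition on $X=1\ot1\#h$, $Y=1\ot1\#g$ yields exactly $u(hg)=(\one h\la u(g))u(\two h)$, the cocycle condition of $Z^1_{\la,as}(H,B)$; so $Z^1(B^e\#H,B)\cong Z^1_{\la,as}(H,B)$.

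For $Z^2$ I would set $\gamma(h,g):=\Gamma(1\ot1\#h,1\ot1\#g)$ together with a bilinear reconstruction of $\Gamma$ from $\gamma$ analogous to the $C^1$ formula, and verify that the normalisation and condition (ii) $\Gamma(X,Ys(b))=\Gamma(X,Yt(b))$ of Definition~\ref{Lcotwist} correspond to the normalisation and the associative-type condition of Definition~\ref{def: assoc type}, while the bialgebroid cocycle identity of Definition~\ref{Lcotwist}(i) corresponds to the $2$-cocycle identity of $Z^2_{\la,as}(H,B)$; as in the $C^1$ case the convolution inverse $\Gamma^{-1}$ maps to $\gamma^{-1}$. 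Finally, to obtain $\H^2\cong\H^2_{\la,as}$, I would check that under these bijections the coboundary \eqref{boundary map} and the equivalence transformation \eqref{equ of bialgebroid cocycle} reproduce the displayed formulas $\partial u(h,g)=u^{-1}(h\o)(h\t\la u^{-1}(g\o))u(h\th g\t)$ and $\gamma^u$. This is a direct substitution using the already-established correspondence of products and of the $H$-action; once it is in place, equivalence classes correspond and Theorem~\ref{thm: boundary map} guarantees well-definedness, giving the isomorphism of cohomologies.

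The main obstacle will be the $Z^2$ step. Unlike the $C^1$ formula, the reconstruction of $\Gamma$ from $\gamma$ is bilinear, and the product on $B^e\#H$ entangles the coproducts of both arguments through the antipode twists $S\two g\la(-)$, so matching the two cocycle identities and verifying well-definedness over $\ot_{B^e}$ is the most bookkeeping-heavy part. I note that one cannot shortcut this by specialising the $\ra$-commutative results of Lemma~\ref{proptZ2}, since a trivial bundle $P=B\#H$ need not be $\ra$-commutative; the computation must be carried out directly in terms of the module-algebra action $\la$.
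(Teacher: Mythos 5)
Your proposal is correct and follows essentially the same route as the paper: the bijection $u\mapsto U_u$ with $U_u(a\ot b\#h)=a(h\o\la b)u(h\t)$, $u_U(h)=U(1\ot1\#h)$ for $C^1$ and $Z^1$, the restriction/reconstruction $\gamma_\Gamma(h,g)=\Gamma(1\ot1\#h,1\ot1\#g)$ for $Z^2$ (for which the paper simply quotes the explicit formula from \cite{HM22} rather than re-deriving it), and the commuting diagram of coboundary maps for $\H^2$. The only difference is one of explicitness on the $Z^2$ reconstruction, which you correctly flag as the bookkeeping-heavy step.
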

\begin{proof}
  $Z^1_{\la,as}(H, B)$ is a group with inverse $u^{-1}(h)=h\o \la u(Sh\t )$. Both $C^1_{\la, as}(H, B)$ and $Z^1_{\la,as}(H, B)$ are groups with convolution product as product. The isomorphism $Z^{1}_{\la,as}(H, B)\simeq Z^{1}(B^{e}\#H, B)$ and $C^{1}_{\la, as}(H, B)\simeq C^{1}(B^{e}\#H, B)$ is given in the two directions by
    \[U_{u}(a\ot b\#h)=a(h\o \la b)u(h\t ),\quad 
u_{U}(h)=U(1\ot 1\# h).
\]
The one to one correspondence
$Z^{2}_{\la,as}(H, B)\simeq Z^{2}(B^{e}\#H, B)$ is given by \cite{HM22}
\[
 \Gamma_{\gamma}(b\ot b'\# h, c\ot c'\# g)=b(\one{h}\triangleright c)\gamma(\two{h}, \one{g})((\three{h}\two{g})\triangleright c')(\four{h}\triangleright b'),
\]
\[
\gamma_{\Gamma}(h, g)=\Gamma(1\ot 1\# h, 1\ot 1\# g).\
\]
For the last part of the statement, we observe that for any $U\in C^{1}(B^e\# H, B)$,
\begin{align*}
    \gamma_{\Gamma^{U}}(h,g)=&\Gamma^{U}(1\ot 1\# h, 1\ot 1\# g)\\
    =&U^{-1}(1\ot 1\# h\o t(U^{-1}(1\ot 1\# g\o)))\Gamma(1\ot 1\# h\th, 1\ot 1\# g\th)U((1\ot 1\# h\fo)(1\ot 1\# g\fo))\\
    =&U^{-1}(1\ot u^{-1}_{U}(g\o)\#h\o)\gamma_{\Gamma}(h\t,g\t)u_{U}(h\th\,g\th)\\
    =&(h\o\la u^{-1}_{U}(g\o))u^{-1}_{U}(h\t)\gamma_{\Gamma}(h\t,g\t)u_{U}(h\th\,g\th)=(\gamma_{\Gamma})^{u_{U}}(h,g)
\end{align*}
for all $h,g\in H$. \end{proof}

We will find similar results in Section~\ref{subsec, 2-cocycle} in our study of action Hopf algebroids, to which we now turn.

\section{Bisections and cocycles on action Hopf algebroids}\label{secact}

Classical action groupoids are an important class covered in the preliminaries. We first recall the  `quantum' version of this construction, which can be viewed as a Drinfeld-Yetter module version of the original construction of Lu in \cite{Lu} or a right hand version of the one given in \cite[Thm. 4.1]{BM}. Let $H$ be a Hopf algebra with invertible antipode and $B$ an object in the Drinfeld-Yetter category $\DYH$. This means that $B$ is  both a right $H$-module by an action $\ra$ and right $H$-comodule by a coaction $v\mapsto \zero{v}\tens v\o$, with the two compatible according to 
\begin{align}\label{equ. Drinfeld-Yetter equation}
    \zero{(b\triangleleft \two{h})}\ot\one{h}\one{(b\triangleleft \two{h})}=\zero{b}\ra\one{h}\ot\one{b}\two{h}
\end{align}
for all $b\in B$ and $h\in H$. This category is the dual or `centre' of the monoidal category $\CM_H$ of right $H$-modules and for finite-dimensional $H$ is just modules of the Drinfeld double $D(H)$, see \cite{Ma:rep,Ma:book}. (Here, $D(H)$ is generated by $H,H^{*op}$ and a module of the latter is being viewed as an $H$-comodule, which works better when $H$ is infinite dimensional.)  Moreover, we ask that $B$ is an algebra in this braided category (the structure maps are morphisms, i.e. $H$-linear and $H$-colinear) and that as such it is  braided commutative, which amounts to 
\[ ab=\zero{b}(a\ra\one{b})\]
for all $a, b\in B$. In this case, $B\# H^{op}$ (which is built on $B\ot H$ as a vector space) is a left bialgebroid as follows. We can first define a $B$-coring structure on $B\# H^{op}$, where the source and target maps are given by
\begin{align}\label{equ. source and target map of action Hopf algebroid}
    s(a)=a\ot 1,\quad t(a)=\zero{a}\ot\one{a}.
\end{align}
Then the coproduct, counit and product are
\begin{align}\label{equ. coproduct and counit of action Hopf algebroid}
 \Delta(a\# h)=a\#\one{h}\ot_{B} 1_{B}\#\two{h}\quad \varepsilon(a\#h)=a\varepsilon_{H}(h),\quad 
    (a\#h)(b\#g)=a(b\ra\one{h})\#g\two{h}
\end{align}
for all $a,b\in B$ and $h,g\in H$. The proofs are  similar to those in other versions and hence we omit them except to verify that the image of the coproduct belongs to Takeuchi product, as a useful check of conventions. Thus,
\begin{align*}
    a\#\one{h}\ot_{B} 1\#\two{h}s(b)=&a\#\one{h}\ot_{B} (b\ra\two{h})\#\three{h}\\
    =&t(b\ra\two{h})a\#\one{h}\ot_{B} 1\#\three{h}\\
    =&(\zero{(b\ra\two{h})}\#\one{(b\ra\two{h})})(a\#\one{h})\ot_{B} 1\#\three{h}\\
    =&\zero{(b\ra\two{h})}(a\ra\one{(b\ra\two{h})})\#\one{h}\two{(b\ra\two{h})}\ot_{B} 1\#\three{h}\\    =&a\zero{(b\ra\two{h})}\#\one{h}\one{(b\ra\two{h})}\ot_{B} 1\#\three{h}\\ 
       =&a(\zero{b}\ra\one{h})\#\one{b}\two{h}\ot_{B}1\#\three{h}= (a\#\one{h})t(b)\ot_{B} 1\#\two{h},
\end{align*}
where the penultimate step uses (\ref{equ. Drinfeld-Yetter equation}). It is known from \cite{BM} that action Hopf algebroids have an antipode in the sense of \cite{Lu}. Given this, \cite[Prop.~4.2]{BS} implies the following, which we give more explicitly. 

\begin{prop}\label{thm. left and anti-left Hopf algebroid on YD module}
    If $B$ is an algebra in $\DYH$ and braided commutative then the bialgebroid $B\# H^{op}$ is a left and anti-left Hopf algebroid.
\end{prop}
\begin{proof}
    For the left Hopf algebroid structure $\lambda^{-1}$, we  define
    \begin{align}\label{equ. left Hopf algebroid structure on smash product}
        (a\#h)_{+}\ot_{B^{op}}(a\#h)_{-}:=a\#\one{h}\ot_{B^{op}}1\#S^{-1}(\two{h})
    \end{align}
and check that
    \begin{align*}
        \lambda\circ\lambda^{-1}(a\#h\ot_{B}b\#g)
        =&a\#\one{h}\ot_{B}(1\#\two{h})(1\#S^{-1}(\three{h}))(b\#g)=a\#h\ot_{B}b\#g,\\
 \lambda^{-1}\circ\lambda(a\#h\ot_{B^{op}}b\#g)=&a\#\one{h}\ot_{B^{op}}(1\#S^{-1}(\two{h}))(1\#\three{h})(b\#g)=a\#h\ot_{B^{op}}b\#g.
\end{align*}
Similarly, for the anti-left Hopf algebroid structure, we have
\begin{align}\label{equ. anti-left Hopf algebroid structure on smash product}
        (a\#h)_{[-]}\ot^{B^{op}}(a\#h)_{[+]}=1\#\one{a}S\one{h}\ot^{B^{op}}\zero{a}\#\two{h}.
    \end{align}
\end{proof}

\subsection{Bisections on action Hopf algebroids}

We first introduce a group $Z^1_\ra(H,B)$ parallel to our treatment for quantum principal bundles, and then show that the group of bisections can be identified with this. 

\begin{prop}\label{thmZ} Let $B$ be a braided-commutative algebra in $\DYH$ and $Z^1_\ra(H,B)$ the set of unital linear maps $\varrho:H\to B$ such that the right non-Abelian cocycle condition
\[ \varrho(gh)=(\varrho(g)\ra \one{h})\varrho(\two{h})\]
holds for all $h,g\in H$ and such that the linear map $\psi: B\to B$ given by
\[
\psi(b):=\zero{b}\varrho(\one{b})
\]for all $b\in B$, is invertible. Then $Z^1_\ra(H,B)$ is a group with product and inverse
\[ (\varrho\bullet\varrho')(h)=\varrho(\one{h}) \psi\circ\varrho'(\two{h}),\quad 
\varrho^{-1}(h)=\psi^{-1}(\varrho(S\one{h})\ra\two{h})
\]
for all $h\in H$, and $\psi$ is an algebra automorphism of $B$ providing a group homomorphism $Z^1_\ra(H,B)\to \Aut(B)$.
\end{prop}
\begin{proof}
First,  $\psi$ is an algebra automorphism of $B$ since
\begin{align*}    \psi(ab)=&\zero{a}\zero{b}\varrho(\one{a}\one{b})=\zero{a}\zero{b}\varrho(\one{a})\triangleleft\one{b}\varrho(\two{b})=\zero{a}\varrho(\one{a})\zero{b}\varrho(\one{b})
    =\psi(a)\psi(b).
\end{align*}
By applying the Drinfeld-Yetter condition, we also observe that 
\begin{align}   \psi(a)\ra\one{h}\varrho(\two{h})=&\zero{a}\ra\one{h}\varrho(\one{a})\ra\two{h}\varrho(\three{h})=\zero{a}\ra\one{h}\varrho(\one{a}\two{h})\nonumber\\
=&\zero{(a\ra\two{h})}\varrho(\one{h})\ra\one{(a\ra\two{h})}\varrho(\two{(a\ra\two{h})})\nonumber\\
=&\varrho(\one{h})\zero{(a\ra\two{h})}\varrho(\one{(a\ra\two{h})})=\varrho(\one{h})\psi(a\ra\two{h}). \label{equ. varrho prop}
\end{align}

Next, we check that the stated product has the required properties. Clearly, it is associative with unit  provided by the counit of $H$ times $1_{B}$, and it is not hard to see that $\zero{b}(\varrho\bullet\varrho')(\one{b})=\psi\circ\psi'(b)$ since $\psi$ is an algebra map (where $\psi'$ is the map associated to $\varrho'$). Moreover,
\begin{align*}
   ((\varrho\bullet\varrho')(g))\ra\one{h}(\varrho\bullet\varrho')(\two{h})=&\varrho(\one{g})\ra\one{h}\psi(\varrho'(\two{g}))\ra\two{h}\varrho(\three{h})\psi\circ\varrho'(\four{h})\\
=&\varrho(\one{g})\ra\one{h}\varrho(\two{h})\psi(\varrho'(\two{g})\ra\three{h}\varrho'(\four{h}))\\
=&(\varrho\bullet\varrho')(gh)
\end{align*}
as required. Next, there is a convolution inverse $\varrho^{*}$ of $\varrho$ given by $\varrho^{*}(h)=\varrho(S\one{h})\ra\two{h}$, since 
\begin{align*}   
(\varrho(S\one{h})\ra\two{h})\varrho(\three{h})=&\varrho((S\one{h})\two{h})=\varepsilon(h)1_{B},\\ 
 \varrho(\one{h})\varrho(S\two{h})\ra\three{h}=&\big((\varrho(\one{h})\ra S\three{h})\varrho(S\two{h})\big)\ra\four{h}=\varrho(\one{h}S\two{h})\ra \three{h}=\varepsilon(h)1_{B}.
\end{align*}
It is then easy to see that $\varrho^{-1}=\psi^{-1}\circ\rho^*$ is the inverse of $\varrho$. We check that it has the required properties to land in $Z^1_\ra(H,B)$,
\begin{align*}
\zero{a}\varrho^{-1}(\one{a})=&\psi^{-1}(\psi(\zero{a})\varrho(S\one{a})\ra\two{a})=\psi^{-1}(\zero{a}\varrho(\one{a})\varrho(S\two{a})\ra\three{a})=\psi^{-1}(a),\\
 \varrho^{-1}(g)\ra\one{h}\varrho^{-1}(\two{h})=&\psi^{-1}\big(\psi(\varrho^{-1}(g)\ra\one{h})\varrho^{*}(\two{h})\big)=\psi^{-1}((\varrho(S\one{h})\varrho^{*}(g))\ra\two{h})=\varrho^{-1}(gh),
\end{align*}
where the second step in the second calculation uses (\ref{equ. varrho prop}).  \end{proof}

\begin{prop}\label{propZ} Let $B$ be a  braided-commutative algebra in $\DYH$. Then $\CB^R(B\# H^{op})\simeq Z^1_\ra(H,B)\simeq (\CB(B\# H^{op}))^{op}$ as groups.\end{prop}
\begin{proof}
    Let $\varsigma$ be a right bisection and define $\varrho:H\to B$ by
    $\varrho(h):=\varsigma(1\#h)$ for all $h\in H$. Clearly, $\varrho$ is unital since $\varsigma$ is, and  $\varsigma(b\#h)=\varsigma(s(b)(1\#h))=b\varsigma(1\#h)=b\varrho(h)$ for all  $b\#h\in B\# H^{op}$. Moreover,  $\psi(b)=\zero{b}\varrho(\one{b})=\varsigma(\zero{b}\#\one{b})=\varsigma\circ t (b)$ recovers the associated  automorphism of $B$. Finally, for  $X=1\#h$ and $Y=1\#g$, we have  that $\varsigma(XY)=\varrho(gh)$  equals $\varsigma(Xs\circ\varsigma(Y))=\varrho(g)\triangleleft\one{h}\varrho(\two{h})$, hence $\varrho\in Z^1_\ra(H,B)$  as in Proposition~\ref{thmZ}.

Conversely, given $\varrho:H\to B$ in $Z^1_\ra(H,B)$, define $\varsigma(b\#h):=b\varrho(h)$.
One can check that $\varsigma$ is left $B$-linear. From the construction, we know that $\varsigma\circ t$ is an automorphism since $\psi$ is. Moreover,  if $X=a\#h$ and $Y=b\#g$ then 
\begin{align*}
    \varsigma(XY)=a(b\triangleleft\one{h})\varrho(g\two{h})=\varsigma(Xs\circ\varsigma(Y))\end{align*}
from $\varrho(gh)=\varrho(g)\triangleleft\one{h}\varrho(\two{h})$. Next, adopting a shorthand  $c:=\psi^{-1}\varsigma(Y)=\psi^{-1}(b\varrho(g))$, 
\begin{align*}
    \varsigma(Xt\circ\psi^{-1}\varsigma(Y))=&a(\zero{c}\triangleleft\one{h})\varrho(\one{c}\two{h})=a\zero{(c\triangleleft\two{h})}\varrho(\one{h}\one{(c\triangleleft\two{h})})\\
    =&a\zero{(c\triangleleft\two{h})}\varrho(\one{h})\triangleleft\one{(c\triangleleft\two{h})}\varrho(\two{(c\triangleleft\two{h})})\\
    =&a\varrho(\one{h})\zero{(c\triangleleft\two{h})}\varrho(\two{(c\triangleleft\one{h})})\\
    =&a\varrho(\one{h})\psi(c\triangleleft\two{h})\\
    =&a\psi(c)\triangleleft\one{h}\varrho(\two{h})\\
    =&a(b\varrho(g))\triangleleft\one{h}\varrho(\two{h})=  \varsigma(Xs\circ\varsigma(Y)), 
\end{align*}
where the 6th step uses (\ref{equ. varrho prop}). This completes the one to one correspondence. Now let $\varsigma$ and $\varsigma'$ be two right bisections, corresponding to $\varrho$ and $\varrho'$ respectively by the construction above, and let $X=b\#h$. Then
\begin{align*}
    (\varsigma'\bullet\varsigma)(X)=&\varsigma'(t\circ\varsigma(\two{X})\one{X})\\
   =&\varsigma'(t\circ\varsigma(1\#\two{h})b\#\one{h})\\
    =&b\zero{\varrho(\two{h})}\varrho'(\one{h}\one{\varrho(\two{h})})\\
    =&b\zero{\varrho(\two{h})}\varrho'(\one{h})\triangleleft\one{\varrho(\two{h})}\varrho'(\two{\varrho(\two{h})})\\
    =&b\varrho'(\one{h})\zero{\varrho(\two{h})}\varrho'(\one{\varrho(\two{h})})\\
    =&b\varrho'(\one{h})\psi'(\varrho(\two{h}))= b(\varrho'\bullet\varrho)(h).
\end{align*}
Hence, the correspondence with $\CB^R(B\# H^{op})$ is as groups. 

Finally, given $\varrho\in Z^1_{\ra}(H, B)$, we construct a left bisection $\sigma(b\#h):=\psi^{-1}(b\varrho(h))$. This is well defined by using the conditions on $\varrho$ as above. On the other hand, given a left bisection $\sigma$, we define $\varrho:=\phi\circ\rho$, where $\phi(b):=\sigma(b\# 1)$, and $\rho(h):=\sigma(1\# h)$. The calculations are similar to the right case and give a group isomorphisms of $Z_{\ra}^{1}(H, B)$ with $\CB(B\# H^{op})^{op}$. \end{proof}

\subsection{Automorphisms of action Hopf algebroids}

Recall that an automorphism of a left bialgebroid over $B$ is a pair of algebra automorphisms $(\Phi, \phi)$ of $\cL$ and $B$ forming a commuting square with the source and target maps, and such that $\Phi$ is a coring map.

\begin{lem}\label{lem. automorphisms of YD module}
   Automorphisms of $B\#H^{op}$ are in one to one correspondence with pairs $(\phi, \nu)$, where $\phi: B\to B$ is an algebra automorphism and $\nu: H\to B\#H^{op}$ sending $h\mapsto \eins{\nu(h)}\#\zwei{\nu(h)}$ is an antialgebra map such that for all $b\in B, h\in H$,
       \begin{itemize}
    \item[(1)] $\nu(\one{h})\ot_{B}\nu(\two{h})=\eins{\nu(h)}\#\one{\zwei{\nu(h)}}\ot_{B}1\#\two{\zwei{\nu(h)}}$,  $\eins{\nu(h)}\varepsilon(\zwei{\nu(h)})=\varepsilon(h)$;
               \item[(2)]$(s\circ\phi)(\zero{b})\nu(\one{b})=\zero{\phi(b)}\#\one{\phi(b)}$;
        \item[(3)] $\nu(h)(s\circ\phi)(b)=(s\circ\phi)(b\triangleleft\one{h})\nu(\two{h})$, $\nu(h)(t\circ\phi)(b)=(t\circ\phi)(b\triangleleft\two{h})\nu(\one{h})$;
             \item [(4)]  $\Phi_{(\phi, \nu)}: B\#H^{op}\to B\#H^{op}$ defined by $\Phi_{(\phi, \nu)}(b\#h):=\phi(b)\nu(h)$  is invertible.
    \end{itemize}
\end{lem}
\begin{proof}
    Given an automorphism $(\Phi, \phi)$ of $B\#H^{op}$, one necessarily has
     $\Phi(b\#h)=\phi(b)\nu(h)$ for some map $\nu$. The map $\phi$ is already an algebra automorphism and $\nu$ is an antialgebra map since $\Phi$ is an algebra map and $(1\#h)(1\#g)=1\#gh$. It is a coring map as in (1) since $(\Phi, \phi)$ is, and $\Phi_{(\phi, \nu)}$ is invertible since $\Phi_{(\phi, \nu)}=\Phi$. Condition (2) is equivalent to $\Phi\circ t=t\circ \phi$. Next, by observing that $(1\#h)(b\#1)=(b\triangleleft\one{h}\#1)(1\#\two{h})$, we obtain the first equation of (3) since $\Phi$ is an algebra map. By the Drinfeld-Yetter condition (\ref{equ. Drinfeld-Yetter equation}), we have that $(1\#h)(\zero{b}\#\one{b})=(\zero{(b\triangleleft\two{h})}\#\one{(b\triangleleft\two{h})})(1\#\one{h})$ or equivalently, $(1\#h)t(b)=t(b\triangleleft\two{h})(1\#\one{h})$. By applying $\Phi$ on both sides, we obtain the second equation of (3).
Conversely, given $(\phi, \nu)$ with the conditions stated, it is not hard to see that $\Phi(b\#h):=\phi(b)\nu(h)$ is an algebra automorphism of $B\#H^{op}$ as well as the other conditions to form a left bialgebroid automorphism along with $\phi$. \end{proof}

One can also compute the actions $\Ad^L$ and $\Ad^R$ for the group crossed module structure for action Hopf algebroids.

\begin{lem}\label{lemactcross} Let $\varsigma$ with associated map $\psi$ be a right bisection of $B\# H^{op}$ corresponding to $\varrho$,  and  let $\varrho^{*}$ be the convolution inverse of $\varrho$. Then for $X=b\#h$, 
\begin{align}
    \Ad^{R}_{\varsigma}(X)=s\circ\psi(b)s\circ\varrho^{*}(\one{h})t\circ\varrho(\three{h})(1\#\two{h}),\quad \ad^R_\varsigma(b)=\psi(b).
\end{align}
If $\sigma$ is a left bisection corresponding to $\varrho$, so $\sigma(b\#h)=\psi^{-1}(b\varrho(h))$, then letting $\rho(h):=\psi^{-1}(\varrho(h))$ and $\phi:=\psi^{-1}$, we have
\begin{align}
 \Ad^{L}_{\sigma}(X)=s\circ\phi(b)s\circ\rho(\one{h})t\circ\rho^{*}(\three{h})(1\#\two{h}),\quad \ad^L_\sigma(b)=\phi(b).
\end{align}
\end{lem}
\begin{proof} By applying (\ref{equ. anti-left Hopf algebroid structure on smash product}) in (\ref{equ. right adjoint automorphism}) to $X=b\#h$,
\begin{align*}
    \Ad^{R}_{\varsigma}(X)=&t(\varsigma(\two{X}))\one{X}{}_{[+]}s(\sigma(\one{X}{}_{[-]}))\\
    =&(\zero{\varrho(\three{h})}\#\one{\varrho(\three{h})})(\zero{b}\#\two{h})(\varrho(\one{b}S\one{h})\#1)\\
    =&t(\varrho(\three{h}))(\zero{b}\varrho(\one{b})\varrho(S\one{h})\ra\two{h}\#\three{h})\\    =&s\circ\psi(b)s\circ\varrho^{*}(\one{h})t\circ\varrho(\three{h})(1\#\two{h}),\end{align*}
while $\ad^R_\sigma=b{}_{\scriptscriptstyle{(0)}}\varrho(b{}_{\scriptscriptstyle{(1)}})$.  Next, for $\sigma$ a left bisection corresponding to $(\phi,\rho)$ and applying (\ref{equ. left Hopf algebroid structure on smash product}) and Proposition~\ref{equ. left adjoint automorphism}, we have
\begin{align*}
    \Ad^{L}_{\sigma}(X)=&s(\sigma(\one{X}))\two{X}{}_{+}t(\sigma(\two{X}{}_{-}))\\
    =&(\phi(a)\rho(\one{h})\#1)(1\#\two{h})(\zero{\rho(S^{-1}\three{h})}\#\one{\rho(S^{-1}\three{h})})\\
    =&(\phi(a)\rho(\one{h})\#1)(\zero{\rho(S^{-1}\three{h})}\triangleleft\one{\two{h}}\#\one{\rho(S^{-1}\three{h})}\two{\two{h}})\\
    =&(\phi(a)\rho(\one{h})\#1)(\zero{(\rho(S^{-1}\four{h})\triangleleft\three{h})}\#\two{h}\one{(\rho(S^{-1}\four{h})\triangleleft\three{h})})\\    =&s\circ\phi(a)s\circ\rho(\one{h})t\circ\rho^{*}(\three{h})(1\#\two{h}),
\end{align*}
where we use (\ref{equ. Drinfeld-Yetter equation}) in the 4th step. For the 5th step, we use that the convolution inverse of $\rho$ is given by $\rho^{*}(h)=\rho(S^{-1}\two{h})\ra\one{h}$. We also immediately have $\ad^L_\sigma(a)=\phi(a)$. 
\end{proof}

In terms of the above isomorphisms, we obtain the following group crossed module. The map $\mu: Z^1_\ra(H,B) \to \Aut(B\# H^{op})$ sends   $\varrho$ to the automorphism given in the form of Lemma~\ref{lem. automorphisms of YD module}  by $(\phi_\varrho,\nu_\varrho)$   with
\[ \phi_\rho(a)=a{}_{\scriptscriptstyle{(0)}}\varrho(a{}_{\scriptscriptstyle{(1)}}),\quad \nu_\varrho(h)=(\varrho(Sh\o )\ra h\t )\zero{\varrho(h\fo)}\#h\th  \varrho(h\fo  )\o .\]
Given automorphism $(\Phi, \phi)$ on $B\# H^{op}$ in the form $(\phi,\nu)$,  its inverse $(\Phi^{-1}, \phi^{-1})$  corresponds to $(\phi^{-1}, \nu^{-1})$ by Lemma \ref{lem. automorphisms of YD module} for a map $\nu^{-1}:H\to B\# H^{op}$ characterised by
\begin{equation}\label{nuinv} \phi^{-1}(\eins{\nu(h)})\eins{\nu^{-1}(\zwei{\nu(h)})}\#\zwei{\nu^{-1}(\zwei{\nu(h)})}=1\#h=\phi(\eins{\nu^{-1}(h)})\eins{\nu(\zwei{\nu^{-1}(h)})}\#\zwei{\nu(\zwei{\nu^{-1}(h)})}\end{equation}
and obeying the same properties as listed for $\nu$. We then obtain the group crossed module action as
\begin{equation}\label{crossed module action of action groupoids}
 ((\phi,\nu)\la \varrho)(h)=\phi(\eins{\nu^{-1}(h)}\varrho(\zwei{\nu^{-1}(h)})). \end{equation}

\subsection{2-cocycles on action Hopf algebroids}\label{subsec, 2-cocycle}

We now turn to the non-Abelian cohomology for action Hopf algebroids. Similarly to Proposition~\ref{thmZ}, we give  concrete descriptions of $C^{1}(B\# H^{op}, B)$ and $Z^{2}(B\# H^{op}, B)$, making use of $Z^2_{\ra,as}(H,B)$ in Definition~\ref{def: assoc type}.  

\begin{prop}\label{lemC} Let $B$ be a braided-commutative algebra in $\DYH$. Then 

(1) $C^{1}(B\# H^{op}, B)\simeq C^1_{\ra, ver}(H,B)$ as groups, where the latter is the set of unital convolution invertible linear maps $\varrho:H\to B$ such that
\[ \zero{b}\varrho(hb\o )=\varrho(h)b,\qquad \varrho(h\o ) b\ra h\t =b\ra h\o \varrho(h\t )\]
holds for all $h\in H$ and $b\in B$, taken with  convolution product.  

(2) $Z^{2}(B\#H^{op}, B)$ is in one to one correspondence with the set $Z^2_\ra(H,B)$ consisting of $\gamma\in Z^2_{\ra, as}(H,B)$ such that 
\[ \gamma(h\o , g)(b\ra h\t )=\zero{b}\ra h\o \gamma(h\t , gb\o ),\qquad \zero{b}\gamma(hb\o , g)=\gamma(h, g)b\]
for all $h,g\in H$ and $b\in B$. 
\end{prop}

\begin{proof}
(1)  The proof is similar to those of Propositions~\ref{thmZ} and ~\ref{propZ}. One can check that the convolution product and inverse preserve the condition to be in $C^1_{\ra, ver}(H,B)$,  so this forms a group. The isomorphism between $C^1_{\ra, ver}(H,B)$ and $C^{1}(B\# H^{op}, B)$ has the same form as for Proposition~\ref{propZ}, where the first condition on $\varrho$ implies that $\varsigma_{\varrho}$ is right-$B$-linear and the second condition implies that  $\varsigma_{\varrho}(Xt(b))=\varsigma_{\varrho}(Xs(b))$.

(2)  If $\gamma\in C^1_{\ra, ver}(H,B)$ then we define $\Gamma_{\gamma}(a\# h, b\# g):=a(b\ra h\o )\gamma(h\t , g )$. This is $B$-bilinear by the second stated condition, and well defined over the balanced tensor product by the first stated condition and the Drinfeld-Yetter condition. Moreover, it is invertible since $\gamma$ is, with $\Gamma_{\gamma}^{-1}(a\#h, b\#g)=a(b\ra h\o )\gamma^{-1}(h\t , g\o )$. Next, let $X=a\#h, Y=b\#g, Z=c\#k$. Using that $\gamma$ is of  associative type, we have  
\begin{align*}\Gamma_{\gamma}(s\circ\Gamma_{\gamma}(X\o , Y\o )X\t Y\t , Z)&=a(b\ra h\o )(c\ra g\o h\t )\gamma(h\th  , g\t )\gamma(g\th  h\fo  , k),\\
\Gamma_{\gamma}(X, s\circ\Gamma_{\gamma}(Y\o , Z\o )Y\t Z\t )&=a(b\ra h\o )(c\ra g\o h\t )\gamma(g\t , k\o )\ra h\th  \gamma(h\fo  , k\t g\th  ),\end{align*}
so that $\Gamma_{\gamma}$ satisfies the 2-cocycle condition since $\gamma$ does. Finally, one  can check by direct computation that $\Gamma_{\gamma}(X, Ys(c))=\Gamma_{\gamma}(X, Y t(c))$. Conversely, given a 2-cocycle $\Gamma\in Z^{2}(\cL, B)$, we define $\gamma_{\Gamma}(h, g):=\Gamma(1\#h, 1\#g)$. Then $(\gamma,\ra)$ is  clearly a 2-cocycle on restricting the cocycle condition for $\Gamma$ to  $X=1\#h, Y=1\#g, Z=1\#k$, and of associative type on restricting to $X=1\#h, Y=1\#g, Z=b\#1$. The further stated conditions on $\gamma$ hold since $\Gamma$ is right $B$-linear and defined over the balanced tensor product. Moreover, these two constructions are inverse. Clearly, $\gamma_{\Gamma_{\gamma}}(h,g)=\gamma(h,g)$, while 
    \begin{align*}
        \Gamma_{\gamma_{\Gamma}}(a\#h, b\#g)=&a(b\ra h\o )\gamma_{\Gamma}(h\t , g)=a(b\ra h\o )\Gamma(1\#h\t , 1\#g)\\ =&\Gamma(a(b\ra h\o )\#h\t , 1\#g)  =\Gamma(a\#h, b\#g).
    \end{align*}
 \end{proof}

\begin{thm}\label{thm:actH2}
Let $B$ be a braided-commutative  algebra in $\DYH$. 

   (i)  For any $\varrho\in C^1_{\ra, ver}(H,B)$, we obtain a 2-cocycle $\partial\varrho\in Z^2_\ra(H,B)$ by
    \begin{align}
        \partial\varrho(h, g)=\varrho^{-1}(h\o )\varrho^{-1}(g\o )\ra h\t \varrho(g\t h\th  )
    \end{align}
for all $h,g\in H$. The subset $Z^1_{\ra, ver}(H,B)$ of elements $\varrho\in  C^1_{\ra, ver}(H,B)$ for which $\partial\varrho(h,g)=\eps(h)\eps(g)$ for all $h,g\in H$, is a subgroup isomorphic to $\CB_{ver}(B\# B^{op},B)^{op}$.

(ii)  The group $C^1_{\ra, ver}(H,B)$ acts on $Z^2_\ra(H,B)$ by 
\begin{align}\label{gammarho}
    \gamma^\varrho(h,g):=\varrho^{-1}(h\o )\varrho^{-1}(g\o )\ra h\t \gamma(h\t , g\t )\varrho(g\th  h\th  )
\end{align}
for $\gamma\in Z^2_\ra(H,B)$ and $\varrho\in C^1_{\ra, ver}(H,B)$. The set $\H_{\ra}^{2}(H, B)$ of orbits under this action can be identified with $\CH^{2}(B\# H^{op},B)$.
\end{thm}
\begin{proof}
We check that  $\partial\varrho$ obeys the conditions in Proposition~\ref{lemC}(2). For all $b\in B$ and $h, g\in H$, 
\begin{align*}
    b\zero{}\ra h\o \partial\varrho(h\t , gb\o )=&b\zero{}\ra h\o  \varrho^{-1}(h\t )\varrho^{-1}(g\o b\o )\ra h\th  \varrho(g\t b\t h\fo  )\\
    =&\varrho^{-1}(h\o )(b\zero{}\varrho^{-1}(g\o b\o ))\ra h\t \varrho(g\t b\t h\th  )\\
    =&\varrho^{-1}(h\o )(\varrho^{-1}(g\o )\ra h\t )(b\zero{}\ra h\th  )\varrho(g\t b\o h\fo  )\\
    =&\varrho^{-1}(h\o )(\varrho^{-1}(g\o )\ra h\t )(b\ra h\fo  )\zero{}\varrho(g\t h\th  (b\ra h\fo  )\o )\\
    =&\varrho^{-1}(h\o )\varrho^{-1}(g\o )\ra h\t \varrho(g\t h\th  )(b\ra h\fo  )=\partial\varrho(h\o , g)(b\ra h\t ),
\end{align*}
where the 2nd step uses the second condition of $C^1_{\ra, ver}(H,B)$ and the 3rd and 5th steps use the first condition of $C^1_{\ra, ver}(H,B)$. Similarly,
\begin{align*}
    b\zero{}\partial\varrho(hb\o , g)=&b\zero{}\varrho^{-1}(h\o b\o )(\varrho^{-1}(g\o )\ra (h\t b\t ))\varrho(g\t h\th  b\th  )\\
    =&\varrho^{-1}(h\o )b\zero{}(\varrho^{-1}(g\o )\ra (h\t b\o ))\varrho(g\t h\th  b\t )\\
    =&\varrho^{-1}(h\o )(\varrho^{-1}(g\o )\ra (h\t ))b\zero{}\varrho(g\t h\th  b\o )\\
    =&\varrho^{-1}(h\o )(\varrho^{-1}(g\o )\ra (h\t ))\varrho(g\t h\th  )b=\partial\varrho(h, g)b,
\end{align*}
where the 2nd and 4th steps use the first condition of $C^1_{\ra, ver}(H,B)$. It is straightforward to check that  $\partial\varrho$ is a 2-cocycle of associative type by using the second condition of $C^1_{\ra, ver}(H,B)$. That $\gamma^{\varrho}$ belongs to $Z^2_\ra(H,B)$ follows from the properties of $\gamma$. Next, for any $\varrho, \varrho'\in C^1_{\ra, ver}(H,B)$, we have
\begin{align*}
    \gamma^{\varrho\ast\varrho'}(h,g)=&\varrho'^{-1}(h\o )\,\varrho^{-1}(h\t )\,(\varrho'^{-1}(g\o)\ra h\th)\,(\varrho^{-1}(g\t)\ra h\fo)\, \gamma(h\fiv , g\th )\varrho(g\fo  h\si)\varrho'(g\fiv  h\sev) \\
   =&\varrho'^{-1}(h\o )\,(\varrho'^{-1}(g\o)\ra h\t)\,\varrho^{-1}(h\th )(\varrho^{-1}(g\t)\ra h\fo)\, \gamma(h\fiv , g\th )\varrho(g\fo  h\si)\varrho'(g\fiv  h\sev) \\
   =&(\gamma^{\varrho}){}^{\varrho'}(h,g)
\end{align*}
for all $h,g\in H$, so we have a group action. The second step uses  $\varrho^{-1}(h\o)\,(\varrho'^{-1}(g)\ra h\t)=(\varrho'^{-1}(g)\ra h\o)\,\varrho^{-1}(h\t)$, which can be derived from $\varsigma_{\varrho^{-1}}(1\#h \,s(\varsigma_{\varrho'^{-1}}(1\#g)))=\varsigma_{\varrho^{-1}}(1\#h\, t(\varsigma_{\varrho'^{-1}}(1\#g)))$ due to the correspondence between $C^{1}(B\# H^{op}, B)$ and $ C^1_{\ra, ver}(H,B)$ in Proposition~\ref{lemC}. Here, $\varsigma_{\varrho^{-1}}$  in $C^{1}(B\# H^{op}, B)$ obeys $\varsigma_{\varrho^{-1}}(1\#h)=\varrho^{-1}(h)$.

For the last part, for $X=1\#h, Y=1\#g$, $U\in C^{1}(B\#H^{op}, B)$ and $\Gamma\in Z^{2}(B\#H^{op}, B)$, 
\begin{align*}
   \gamma_{\Gamma^U}(h,g) =&\Gamma^U(X, Y)=U^{-1}(X\o t\circ U^{-1}(Y\o ))\Gamma(X\t , Y\t )U(X\th  Y\th  )\\
    =&\varrho_{U}^{-1}(h\o )(\varrho_{U}(g\o )\ra h\t )\gamma_{\Gamma}(h\th  , g\t )\varrho_{U}(g\th  h\fo  )=\gamma_{\Gamma}^{\varrho_{U}}(h,g),
\end{align*}
where $\varrho_{U}(h)=U(1\#h)$ and $\gamma_{\Gamma}(h, g)=\Gamma(X, Y)$. Hence, the non-Abelian cohomologies correspond as stated. \end{proof}

This gives us a concrete description of $\H^2(\CL,B)$ in the action Hopf algebroid case, which is our main result of the section.

\subsection{Weyl Hopf algebroids}

Here we see how our above constructions look in the special case of the canonical Weyl Hopf algebroid $H^*\# H^{op}$ associated to any finite-dimensional Hopf algebra $H$, which was a main example in \cite{Lu}. For an appropriate dual, this can also be extended to infinite-dimensional Hopf algebras $H$. 

\begin{rem} Before doing the case of interest, we note that there is another canonical example for any Hopf algebra $H$. Here $H\in \DYH$ in a well-known manner \cite{Ma:book} by
\begin{align}\label{canHcrossed}
   h\triangleleft g:=(S\one{g})h\two{g},\quad \Delta_R(h):=\one{h}\ot\two{h}
\end{align}
for all $h,g\in H$. That $H$ is braided commutative is immediate and as a result, $H\#H^{op}$ is a left and anti-left Hopf algebroid. It is easy to see that $H\#H^{op}\simeq \CL(H,k)=H\tens H^{op}$ where we regard  $H$ as a trivial quantum principal bundle with trivial coaction of the ground field $k$, so the base is again $H$. The standard algebra isomorphism here is $\Phi: \CL(H, k)\to H\#H^{op}$ given by
$\Phi(h\ot g)=h\one{g}\#\two{g}$ with inverse  $\Phi^{-1}(h\# g)=hS\one{g}\ot \two{g}$, cf \cite{Ma:book}, and one can check that it gives a Hopf algebroid isomorphism when taking the structures on the two sides. Hence this case is already covered by our results of Section~\ref{secesb}. In particular, $Z^1_\ra(H,H)\simeq \Aut(H)$, the group of algebra automorphisms of $H$, forming a group crossed module with $\mu=\id$ and the adjoint action of the group. \end{rem}

Now suppose (for simplicity) that $H$ is a finite-dimensional Hopf algebra. Then $H^*\in \DYH$ by
\begin{equation}\label{canH*crossed} a\ra h= \<a{}_{\scriptscriptstyle{(1)}},h\>a{}_{\scriptscriptstyle{(2)}},\quad   (\id \tens b)(\Delta_R a)= b{}_{\scriptscriptstyle{(1)}} a S b{}_{\scriptscriptstyle{(2)}}\end{equation}
for all $h\in H$ and $a,b\in H^*$. This is well-known in a dual form where $H$ acts on itself by multiplication and coacts by an adjoint coaction. For completeness, we check the compatibility
\begin{align*} (a\ra h{}_{\scriptscriptstyle{(2)}}){}_{\scriptscriptstyle{(0)}}\tens\<b, h{}_{\scriptscriptstyle{(1)}} (a\ra h{}_{\scriptscriptstyle{(2)}}){}_{\scriptscriptstyle{(1)}}\>&=b{}_{\scriptscriptstyle{(2)}}\<a{}_{\scriptscriptstyle{(1)}},h{}_{\scriptscriptstyle{(2)}}\> a{}_{\scriptscriptstyle{(2)}} S b{}_{\scriptscriptstyle{(3)}}\<b,h{}_{\scriptscriptstyle{(1)}}\>=b{}_{\scriptscriptstyle{(2)}} a{}_{\scriptscriptstyle{(2)}} S b{}_{\scriptscriptstyle{(3)}} \<b{}_{\scriptscriptstyle{(1)}} a{}_{\scriptscriptstyle{(1)}},h\>\\
&=(b{}_{\scriptscriptstyle{(1)}} a S b{}_{\scriptscriptstyle{(2)}}){}_{\scriptscriptstyle{(2)}} \<(b{}_{\scriptscriptstyle{(1)}} a S b{}_{\scriptscriptstyle{(2)}}){}_{\scriptscriptstyle{(1)}} b{}_{\scriptscriptstyle{(3)}},h\>\\ &=(b{}_{\scriptscriptstyle{(1)}} a S b{}_{\scriptscriptstyle{(2)}}){}_{\scriptscriptstyle{(2)}} \<b{}_{\scriptscriptstyle{(3)}},h{}_{\scriptscriptstyle{(2)}}\>\<(b{}_{\scriptscriptstyle{(1)}} a S b{}_{\scriptscriptstyle{(3)}}){}_{\scriptscriptstyle{(1)}} , h{}_{\scriptscriptstyle{(1)}}\>=a{}_{\scriptscriptstyle{(0)}}\ra h{}_{\scriptscriptstyle{(1)}}\tens \<b, a{}_{\scriptscriptstyle{(1)}} h{}_{\scriptscriptstyle{(2)}}\>.
\end{align*}
That $H^*$ is braided-commutative is
\begin{align*}\<b{}_{\scriptscriptstyle{(0)}} (a\ra b{}_{\scriptscriptstyle{(1)}}), h\>&=\<b{}_{\scriptscriptstyle{(0)}},h{}_{\scriptscriptstyle{(1)}}\>\<a\ra b{}_{\scriptscriptstyle{(1)}},h{}_{\scriptscriptstyle{(2)}}\>=\<b{}_{\scriptscriptstyle{(0)}},h{}_{\scriptscriptstyle{(1)}}\>\<a{}_{\scriptscriptstyle{(1)}},b{}_{\scriptscriptstyle{(1)}}\>\<a{}_{\scriptscriptstyle{(2)}},h{}_{\scriptscriptstyle{(2)}}\>\\
&=\<a{}_{\scriptscriptstyle{(1)}} b S a{}_{\scriptscriptstyle{(2)}}, h{}_{\scriptscriptstyle{(1)}}\>\<a{}_{\scriptscriptstyle{(3)}},h{}_{\scriptscriptstyle{(2)}}\>=\<ab,h\>.\end{align*}
We therefore have the {\em Weyl Hopf algebroid} $H^*\# H^{op}$ cf \cite{Lu}. It is also known \cite{Ma:book} that this is isomorphic as an algebra to a matrix algebra $M_n(k)$, where $n=\dim(H)$.

\begin{prop}\label{Zaut} $Z^1_\ra(H,H^*)$ can be identified with the group $G(H^*)$ of counital invertible elements of $H^*$.  For any $\alpha\in G(H^*)$, the corresponding  1-cocycle $\varrho_{\alpha}$ is given by 
\[\varrho_{\alpha}(h)=\<\alpha\o,h\> \alpha\t \alpha^{-1},\qquad \psi_{\alpha}(a)=\alpha a\alpha^{-1}\]
for any $h\in H$ and $a\in H^*$, where Proposition~\ref{thmZ} 
 identifies $Z^1_\ra(H,H^*)$ as pairs of maps $\varrho:H\to H^*$ and $\psi:H^*\to H^*$. \end{prop}
\begin{proof} The right 1-cocycle condition on $\rho$ for our given action is
\begin{align*} \varrho(gh)&=(\varrho(g)\ra h{}_{\scriptscriptstyle{(1)}}) \varrho(h{}_{\scriptscriptstyle{(2)}})=\<\varrho(g)\o,h{}_{\scriptscriptstyle{(1)}}\>  \varrho(g)\t \varrho(h{}_{\scriptscriptstyle{(2)}})\end{align*}
for all $h,g\in H$. Given a unital such 1-cocycle, we let $\alpha(h)=\eps(\varrho(h))$. This obeys $\eps(\alpha)=\alpha(1)=1$ and
\[\<\alpha\o,h\> \<\alpha\t \alpha^{-1},g\>= \alpha(hg\o)\alpha^{-1}(g\t)=\eps(\varrho(hg\o))\alpha^{-1}(g\t)=\<\varrho(h),g{}_{\scriptscriptstyle{(1)}}\>  \eps( \varrho(g{}_{\scriptscriptstyle{(2)}})) \alpha^{-1}(g\th)=\varrho(g)(h),\] so that the formula stated indeed recovers $\varrho$. Conversely, one can check that for any $\alpha$ as stated, we have a unital 1-cocycle. We also compute
\begin{align*} \<\psi(a),h\>&=\<a_0,h\o\>\<\varrho(a\o),h\t\>=\<a_0,h\o\>\<\alpha\o,a\o\>\<\alpha\t \alpha^{-1},h\t\>\\
&=\<\alpha\o a (S\alpha\t)\alpha\th\alpha^{-1},h\>=\<\alpha a \alpha^{-1},h\>\end{align*}
and then
\begin{align*} (\varrho\bullet\varrho')(h)&=\rho(h\o)\psi\circ\rho'(h\t)=\<\alpha\o,h\o\>\alpha\t\alpha^{-1}\<\beta\o,h\t\>\psi(\beta\t\beta^{-1})\\ &=
\<\alpha\o\beta\o,h\>\alpha\t\alpha^{-1}\alpha \beta\t\beta^{-1}\alpha^{-1}=\<(\alpha\beta)\o,h\>(\alpha\beta)\t(\alpha\beta)^{-1},\end{align*}
so that the group product corresponds to multiplication in $H^*$. It follows, but one can also check directly that
\begin{align*}\varrho^{-1}(h)&= \alpha^{-1} (\varrho(Sh\o)\ra h\t)\alpha =\alpha^{-1}\<\alpha\o,Sh\o\>\<(\alpha\t\alpha^{-1})\o,h\t\>(\alpha\t\alpha^{-1})\t\alpha\\
&=\alpha^{-1}\<\alpha\o,Sh\o\>\<\alpha\t\alpha^{-1}\o,h\t\>\alpha\th\alpha^{-1}\t\alpha =\<\alpha^{-1}\o,h\> \alpha^{-1}\t\alpha\end{align*}
has the same form with $\alpha^{-1}$ in place of $\alpha$. Clearly also $\eps(\psi(a))=\eps(\alpha\alpha^{-1})\eps(a)=\eps(a)$.
\end{proof}

For automorphisms, we similarly find:

\begin{prop}\label{Weylaut} The group
    $\Aut(H^{*}\# H^{op})$ can be identified with the group of counital algebra automorphisms of $H^*$.
\end{prop}
\begin{proof} First, it is convenient to use $\Hom(H, H^{*}\# H^{op})\simeq \Hom(H\ot H, H)$ identified in the obvious way. If $\tilde\nu(h)=\tilde\nu(h)\eins{}\ot\tilde\nu(h)\zwei{}$ denotes the original element of $\Hom(H, H^{*}\# H^{op})$, we can work equivalently with $\nu\in \Hom(H\ot H, H)$ given by $\nu(h \tens g)=\<\tilde\nu(h)\eins{},g\>\tilde\nu(h)\zwei{}$ for all $h,g\in H$. Conversely, we recover $\tilde\nu(h)\eins{}\ot \tilde\nu(h)\zwei{}=f^i\ot \nu(h\ot e_{i})$ using a basis and dual basis, where $f^i\ot e_{i}\in H^*\ot H$ obeys $\<f^i,h\>e_{i}=h$ for any $h\in H$.  We then use the specific action and coaction applicable in the case of the Weyl Hopf algebroid in the conditions for an automorphism in  Lemma \ref{lem. automorphisms of YD module}. A straightforward calculation, which we omit,  gives us that an automorphism of $H^{*}\# H^{op}$ then corresponds to $\phi: H^*\to H^*$ an algebra automorphism and $\nu:H\tens H\to H$ obeying 
\begin{align}\label{Weylaut1}
\nu(hg\ot f)&=\nu(h\ot\nu(g\ot f\o )\o f\t )\nu(g\ot f\o )\t ,\quad \nu(1\tens h)=\varepsilon(h)1_{H},\\\label{Weylaut2}
\Delta\circ \nu(h\ot g)&=\nu(h\o\tens g\o)g\t S g\fo  \ot\nu(h\t \tens g\th  ),\quad \varepsilon(\nu(h\ot g))=\varepsilon(hg),\\
\phi^{*}(g)\t \tens \nu&(\phi^{*}(g)\o S\phi^{*}(g)\th  \ot g\fo  )= \phi^{*}(g\t )\tens g\o Sg\th,\\ \label{Weylaut4}
\phi^{*}(\nu(h\ot g\o)\o& g\t )\tens \nu(h\ot g\o)\t = h\o\phi^{*}(g\o)\tens\nu(h\t \ot g\t ),\\ \label{Weylaut5}
 \phi^{*}(\nu(h\ot g\o)\t &g\th  )\tens \nu(h\ot g\o)\o g\t = h\t \phi^{*}(g\th  )\tens\nu(h\o\ot g\o)g\t \end{align}
for all $h,g,f\in H$.   
The first two conditions listed are equivalent to  $\nu$ in its original form an antialgebra map and coalgebra map.  The remaining three conditions listed are equivalent to the other conditions in Lemma~\ref{lem. automorphisms of YD module}, on noting that $\phi^*:H\to H$ is a coalgebra map defined by $\<\alpha,\phi^{*}(h)\>:=\<\phi(\alpha),h\>$ for any $\alpha\in H^*$ and $h\in H$. We note the left adjoint coaction appearing in some of the above  conditions.

Moreover,  let $(\phi^{-1},\tilde{\nu}^{-1})$ denote the inverse automorphism of $(\phi, \tilde{\nu})$. Then $\phi^{-1}$ is the inverse as an algebra automorphism and $\nu^{-1}$ (corresponding to $\tilde{\nu}^{-1}$) is `inverse' to $\nu$ in the sense 
\begin{equation}\label{Weylaut6}\nu(\nu^{-1}(h\ot \phi^{*}(g\o))\ot g\t )=\varepsilon(g)h=\nu^{-1}(\nu(h\ot \phi^{*-1}(g\o))\ot g\t ).\end{equation}
This follows on one side by  evaluating  the first factor (\ref{nuinv}) on $g\in H$, 
\begin{align*}
    \<\phi^{-1}(\eins{\tilde\nu(h)})&\eins{\tilde\nu^{-1}(\zwei{\tilde\nu(h)})}, g\>\zwei{\tilde\nu^{-1}(\zwei{\tilde\nu(h)})}\\
    &=\<\eins{\tilde\nu(h)}, \phi^{*-1}(g\o)\>\<\eins{\tilde\nu^{-1}(\zwei{\tilde\nu(h)})}, g\t \>\zwei{\tilde\nu^{-1}(\zwei{\tilde\nu(h)})}\\
    &=\<\eins{\tilde\nu^{-1}(\nu(h, \phi^{*-1}(g\o)))}, g\t \>\zwei{\tilde\nu^{-1}(\nu(h, \phi^{*-1}(g\o)))}\\
    &=\nu^{-1}(\nu(h\ot \phi^{*-1}(g\o))\ot g\t ),
\end{align*}
required to equal $\varepsilon(g)h$. Similarly for the other side. Now let $(\phi, \nu)\in \Aut(H^{*}\# H^{op})$ be as above and $\tilde{\phi}(h):=\nu(h\ot 1)$ for all $h\in H$. Then (\ref{Weylaut1}) and (\ref{Weylaut2})  imply that $\tilde{\phi}$ is a unital coalgebra map, and is an automorphism on using (\ref{Weylaut6}). Also, (\ref{Weylaut4}) or (\ref{Weylaut5}) imply that $\phi^{*}=\tilde{\phi}^{-1}$, so $\nu$ is determined from $\phi$. 
 
 Conversely, if $\phi$ is a counital algebra automorphism of $H^*$ then $\tilde{\phi}=\phi^*{}^{-1}$ is a unital coalgebra automorphism map. We then use  (\ref{Weylaut1})   with $f=1$ to recover $\nu$ from $\nu(h\tens 1)=\tilde\phi(h)$ as 
   \begin{align}
       \nu(h\ot g)=\tilde{\phi}(h\tilde{\phi}^{-1}(g\o))Sg\t .
   \end{align}
One can  check directly that $\nu$ satisfies the remaining conditions of (\ref{Weylaut1})-(\ref{Weylaut6}) using the properties of  $\tilde\phi$. Moreover,  $\nu$ is invertible in the sense of (\ref{Weylaut6}) on using that $\tilde{\phi}$ is invertible, 
\begin{align*}
    \nu(\nu^{-1}(h\ot \phi^{*}(g\o))\ot g\t )=& \nu(\nu^{-1}(h\ot \tilde{\phi}^{-1}(g\o))\ot g\t)=\nu(\tilde{\phi}^{-1}(hg\o)\tilde{\phi}^{-1}(Sg\t)\ot g\th)\\
    =&\tilde{\phi}(\tilde{\phi}^{-1}(hg\o)\tilde{\phi}^{-1}(Sg\t)\tilde{\phi}^{-1}(g\th))Sg\fo=\tilde{\phi}(\tilde{\phi}^{-1}(hg\o))Sg\t=\varepsilon(g)h,
\end{align*}
where the 4th step uses that $\tilde{\phi}$ is a coalgebra map. 
It is also clear that composition in $\Aut(H^*\# H^{op})$ corresponds to  composition of the underlying $\phi$.
\end{proof}

\begin{cor} \label{corZ1W} Consider elements of $\Aut(H^{*}\# H^{op})$ and $Z^1_\ra(H,H^*)$  given as above in terms of  corresponding counital algebra automorphisms $\psi$,  and counital invertible elements $\alpha\in H^*$ respectively. In these terms, the associated group crossed module map $\mu: Z^1_\ra(H,H^{*}) \to \Aut(H^{*}\# H^{op})$ sends $\alpha\in H^*$ to  $\psi$ given by $\psi(a)=\alpha a\alpha^{-1}$ for all $a\in H^*$, while the action of   $\phi$ on $\alpha$ is $\phi\la\alpha=\phi(\alpha)$.
\end{cor}
\begin{proof} We let $\varrho\in Z^1_\ra(H,H^*)$ correspond to $\alpha$. The image under $\mu$ from Lemma~\ref{lemactcross} is the automorphism $\Psi_\varrho(a\# h)=s\circ\psi(a)s\circ\varrho^{*}(\one{h})t\circ\varrho(\three{h})(1\#\two{h})$, which corresponds to $\psi$ and
\begin{align*}\nu(h\tens g)&=\<\varrho^{*}(h\o),g\o\>\<\varrho(h\th  ), g\th  \>h\t g\t Sg\fo \\
&=\alpha(g\o)\alpha^{-1}(h\o g\t )\alpha(h\th  g\fo  )\alpha^{-1}(g\fiv )h\t g\th  Sg\si, \end{align*}
where we used an expression for  $\varrho^{-1}$ to obtain $\<\varrho^{*}(h),g\>=\<\alpha, g\o\>\<\alpha^{-1},h g\t \>$ for the second step.  Setting $g=1$, the associated unital coalgebra automorphism is $\nu(h\tens 1)=\alpha^{-1}(h\o)h\t \alpha(h\th  )$, which is indeed $\psi^*{}^{-1}$ as required in Proposition~\ref{Weylaut}. Similarly, from (\ref{crossed module action of action groupoids}), we compute the group crossed module action as
\begin{align*}
    ((\phi, \nu)\la \varrho)(h)(g)=&\<\eins{\tilde\nu^{-1}(h)\varrho(\zwei{\tilde\nu^{-1}(h)}), \phi^{*}(g)\>}=\varrho(\nu^{-1}(h\tens \phi^{*}(g\o)), \phi^{*}(g\t ))\\=&\alpha(\nu^{-1}(h\tens \phi^{*}(g\o))\phi^{*}(g\t ))\alpha^{-1}(\phi^{*}(g\th  )), 
\end{align*}
where we use that $\phi^{*}$ is a coalgebra map. To find the corresponding transformed $\alpha$, we set $g=1$ and $\nu^{-1}(h\tens 1)=\tilde\phi^{-1}(h)=\phi^*(h)$ from Proposition~\ref{Weylaut}.  \end{proof}

Next, it is convenient to write  $\varrho\in C^1_{\ra, ver}(H,H^*)$ as $\rho(h, g):=\varrho(h)(g)$ so that it becomes identified with the convolution invertible maps $\rho: H\ot H\to k$ such that
\[
   \rho(1, g)=\varepsilon(g),\quad \rho(h\o, g\o)h\t g\t =h\o g\o\rho(h\t , g\t ),\quad
    \rho(hg\o Sg\th, g\fo  ) g\t=\rho(h, g\o)g\t\]
for all $h,g\in H$. Moreover, there is a subgroup inclusion $i: Z^1_{\ra, ver}(H,H^*)\hookrightarrow C^1_{\ra, ver}(H,H^*)$ given by
\begin{equation}\label{ialpha}
    i(\alpha)=\alpha\o\ot \alpha\t \alpha^{-1}.\end{equation}

\begin{prop}\label{propweylZ2}  $Z^2_{\ra}(H,H^*)$ can be identified with convolution-invertible maps  $\delta:H\tens H\to k$ that respect the unit in the sense $\delta(h,1)=\varepsilon(h)1=\delta(1,h)$ and obey
\begin{align*}
  & (1)\quad  \delta(h\o, g\o)g\t h\t =g\o h\o\delta(h\t , g\t ); \\
 & (2)\quad  \delta(f\o, h\o)\delta(h\t f\t , g\o)\delta^{-1}(f\th  , g\t h\th  )h\fo  f\fo  \\
 &\qquad=\delta(f\si  , h\si  )\delta(h\sev f\sev , g\o h\o f\o (Sf\fiv )(Sh\fiv  ))\delta^{-1}(f\eig , g\t h\t f\t (Sf\fo  )(Sh\fo  )h\eig )h\th  f\th ;\\
 & (3)\quad \delta(f\o , h\o )\delta(h\t f\t , g\o )\delta^{-1}(f\th  , g\t h\th  )f\fo \\
 &\qquad=\delta(f\eig , h\o f\o Sf\sev )\delta(h\t f\t (Sf\si)  f\nin , g\o )\delta^{-1}(f\ten , g\t h\th  f\th  (Sf\fiv  ))f\fo\end{align*}
 for all $f,g,h\in H$. The corresponding  2-cocycle is
\[ \gamma_{\delta}(g, h)=\<\delta^{2}\delta^{1'}{}\o \delta^{-2}{}\t ,g\>\<\delta^{2'}\delta^{-2}{}\o ,h\>\delta^{1}\delta^{1'}{}\t \delta^{-1},\]
 where $\delta=\delta^1\tens\delta^2=\delta^{1'}\tens\delta^{2'}$ with inverse $\delta^{-1}\ot\delta^{-2}$  are viewed in $H^*\tens H^*$.
\end{prop}
\begin{proof}
    Given $\gamma\in Z^2_\ra(H,H^*)$, we define $\delta_{\gamma}(h, g):=\varepsilon(\gamma(h, g))$. One can check that $\delta_{\gamma}$ is unital and convolution invertible since $\gamma$ is. Using that $\gamma$ is of associative type, one has
      \[\<\gamma(h\o , g\o ), f\o \>g\t h\t f\t =g\o h\o f\o \<\gamma(h\t , g\t ), f\t \>,\]
   which implies that $\delta_{\gamma}(h\o , g\o )g\t h\t =g\o h\o \delta_{\gamma}(h\t , g\t )$ on setting $f=1$. Conditions (2) and (3) for $\delta_{\gamma}$ can then be derived from the properties of $\gamma$ in Proposition~\ref{lemC}.

    Conversely, given $\delta$ as stated, $\gamma_{\delta}$ is clearly counital and convolution invertible, with inverse given by
   \[ \gamma^{-1}_{\delta}(g, h)=\<\delta^{2}{}\t \gamma^{-1'}{}\o \gamma^{-2},g\>\<\delta^{2}{}\o \gamma^{-2'},h\>\delta^{1}\gamma^{-1'}{}\t \gamma^{-1}.\]
    It is not hard to check that $\<\gamma_{\delta}(h\o , g\o ), f\o \>g\t h\t f\t =g\o h\o f\o \<\gamma_{\delta}(h\t , g\t ), f\t \>$,  from which it follows that $\gamma_\delta$ is of associative type. Next, the 2-cocycle condition  is equivalent to 
    \begin{align}\label{equ. two cocycle on HH*}
        \<\gamma_{\delta}(g\o , f\o ), h\o k\o  \>\<\gamma_{\delta}(h\t , f\t g\t ), k\t \>=\<\gamma_{\delta}(h\o , h\o ), k\o  \>\<\gamma_{\delta}(g\t h\t , f), k\t \>,
    \end{align}
with the left hand side equal to
\[ \delta(h\o k\o , g\o )\delta(g\t h\t k\t , f\o )\delta^{-1}(h\th  k\th  , f\t g\th  )\delta(k\fo  , h\fo  )\delta(h\fiv  k\fiv  , f\th  g\fo  )\delta^{-1}(k\si  , f\fo  g\fiv  h\si  ).\]
The right hand side is equal to $\delta(k\o , h\o )\delta(h\t k\t , g\o )\delta(g\t h\th  k\th  , f\o )\delta^{-1}(k\fo  , f\t g\th  h\fo  )$ and the two are equal by property (1). Finally, $\gamma_{\delta}$ satisfies the rest of the conditions in Proposition~\ref{lemC} by properties (2) and (3). These constructions are inverse. Thus,
\begin{align*}
\<\gamma_{\delta_{\gamma}}(g, f), h\>=\varepsilon(\gamma(h\o , g\o )\gamma(g\t h\t , f\o )\gamma^{-1}(h\th  , f\t g\th  ))=\<\gamma(g, f), h\>,
\end{align*}
where the last step uses that $\gamma$ satisfies (\ref{equ. two cocycle on HH*}) with $k=1$. The other direction $\delta_{\gamma_{\delta}}(g, f)=\varepsilon(\gamma_{\delta}(g, f))=\delta(g, f)$ is clear. 
\end{proof}

We then have

\begin{cor}
    For any $\varrho\in C^1_{\ra, ver}(H,H^*)$, we obtain a 2-cocycle $\partial\varrho\in Z^2_\ra(H,H^*)$ by
    \begin{align}
        \partial\varrho(h, g)=\rho^{-1}(h\o , 1)\rho^{-1}(g\o , h\t )\rho(g\t h\th  , 1)
    \end{align}
for all $h,g\in H$, where $\rho(h,g)=\varrho(h)(g)$. Moreover, if  $\delta\in Z^2_\ra(H,H^*)$ then so is 
\begin{align}
    \delta^\varrho(h, g)=\rho^{-1}(h\o , 1)\rho^{-1}(g\o , h\t )\delta(h\th  , g\t )\rho(g\th  h\fo  , 1),
\end{align}
giving a group action of $C^1_{\ra, ver}(H,H^*)$ with the set of orbits $\CH^2_\ra(H,H^*)$.\end{cor}
\proof This follows by Theorem~\ref{thm:actH2} and the above identifications. \endproof

Note that the way we have presented it, $Z^1_{\ra,ver}(H,H^*)$ sits inside $C^1_{\ra,ver}(H,H^*)$ via the map $i$ in (\ref{ialpha}) and  by  Theorem~\ref{thm: boundary map},  exactly such $i(\alpha)$ are trivial under $\partial$ in the sense $(\partial i(\alpha))(h,g)=\eps(hg)$. The simplest case is  the group Hopf algebra $H=k G$ for a finite group $G$ acting on its functions $k(G)$ by left translation and coacting trivially to give the Weyl Hopf algebroid $k(G)\# k G^{op}$. In this case, all bisections are vertical by Corollary~\ref{corZ1W} and $C^1_{\ra,ver}(k G, k(G))$ consists of invertible functions $\rho\in k(G\times G)$ that are 1 when the first argument is the group identity. The set $Z^2_\ra(k G,k(G))$ consists of invertible such functions $\delta$ that are 1 when either argument is the identity.  The transformed $\delta^\rho$ is $\delta^\rho(h,g)=\delta(h,g)\rho(gh,1)/(\rho(h,1)\rho(g,h))$. The cohomology $\CH^2(k G,k(G))$ is thus rather different from multiplicative group cohomology. 

The Weyl Hopf algebroid example extends to any $k(X)\# k G^{op}$ for a finite group $G$ left acting on a set $X$, regarding $B=k(X)$  with trivial coaction (given that $k G$ is cocommutative). This extends in principle to an algebraic setting $\C[X]\# U(\mathfrak g)^{op}$ where a Lie algebra acts on a commutative coordinate algebra by vector fields,  with appropriate care for the algebraic version.  Based on this experience with action Hopf algebroids, one can expect that if  $M$ is a smooth manifold and $D(M)$ the Hopf algebroid of differential operators \cite{Xu} on $M$ then its bisections  should be similarly expressible and the associated cohomology of interest. This will be looked at elsewhere.

\subsection{Action Hopf algebroid associated to a coquasitriangular Hopf algebra}\label{secquasi}

Recall \cite{Ma:book,Ma:bg} that a coquasitriangular Hopf algebra is a Hopf algebra $H$ together with a convolution invertible linear map $\R: H\ot H\to k$, such that
\[    \R(h\ot gf)=\R(\one{h}\ot f)\R(\two{h}\ot g),\quad \R(hg,f)=\R(h, \one{f})\R(g,\two{f}), \]
 \[  \one{g}\one{h}\R(\two{h}\ot\two{g})=\R(\one{h}\ot\one{g})\two{h}\two{g}\]
for all $f,g,h\in H$. We also recall that if $H$ is a coquasitriangular Hopf algebra, we can define a new associative algebra on the same underlying vector space $H$ (denoted by $\udh$) by \cite{Ma:bg},
\begin{align}
    h\bullet g=\two{h}\three{g}\R(\three{h}\ot S\one{g})\R(\one{h}\ot\two{g}).
\end{align}
This is part of the structure of a braided Hopf algebra in the category of right $H$-comodules, where $H$ coacts by $\Ad_R(h)=h{}_{\scriptscriptstyle{(2)}} \tens (S h{}_{\scriptscriptstyle{(1)}})h{}_{\scriptscriptstyle{(3)}}$. It is also known \cite{BegMa,Ma:hod} that $\underline{H}$ is braided-commutative with respect to the braiding of $\DYH$ when viewed as an object by coaction  $\Ad_R$ and  action
\begin{align}
    h\ra g:=\two{h}\R(\one{g}\ot\one{h})\R(\three{h}\ot\two{g})
\end{align}
for all $h,g\in H$.

\begin{thm}
    Let $H$ be a coquasitriangular Hopf algebra and $(H, \udh, \adr, \ra)$ be the data as above. Then $\UH$ is a braided-commutative algebra in $\DYH$. Hence we have an action Hopf algebroid $\underline{H}\# H^{op}$.
\end{thm}
\proof Most of this is known, we just have verify that $\underline H$ is indeed an $H$-module algebra (its structure maps are already $H$-colinear as part of the braided-Hopf algebra structure). We check,
\begin{align*}
    (h\bullet g)\ra f&=\three{h}\four{g}\R(\one{f}\ot \two{h}\three{g}\R(\one{h}\ot\two{g}))\R(\four{h}\ot\two{f})\R(\five{g}\ot\three{f})\R(\five{h}\ot S\one{g})\\
    &=\three{h}\four{g}\R(\one{f}\ot\one{h})\R(\two{f}\ot\two{g
    })\R(\two{h}\ot\three{g})\R(\four{h}\ot\three{f})\R(\five{g}\ot\four{f})\R(\five{h}\ot S\one{g}),\\
    (h\ra\one{f})\bullet&(g\ra\two{f})\\
    &=\three{h}\four{g}\R(\four{h}\ot S\two{g})\R(\two{h}\ot\three{g})\R(\one{f}\ot\one{h})\R(\five{h}\ot\two{f})\R(\three{f}\ot\one{g})\R(\five{g}\ot\four{f})
\end{align*}
for all $f,g,h\in H$. The two expressions are equal using that
\begin{align*}
  \R(\two{h}\ot & S\one{g})\R(\one{f}\ot\two{g})\R(\one{h}\ot\two{f})=  \R(h\ot (S\one{g})\two{f}\R(\one{f}\ot\two{g}))\\
  &=\R(h\ot \two{(Sg)}\two{f}\R(\one{f}\ot S^{-1}(\one{(Sg)})))=\R(h\ot \two{(Sg)}\two{f}\R^{-1}(\one{f}\ot \one{(Sg)}))\\
    &=\R(h\ot \one{f} \one{(Sg)}\R^{-1}(\two{f}\ot \two{(Sg)}))=\R(h\ot \one{f} \one{(Sg)}\R(\two{f}\ot S^{-1}(\two{(Sg)})))\\
    &=\R(h\ot \one{f} (S\two{g})\R(\two{f}\ot \one{g}))=\R(h\ot \one{f} (S\two{g})\R(\two{f}\ot \one{g}))\\
    &=\R(\one{h}\ot S\two{g})\R(\two{f}\ot\one{g})\R(\two{h}\ot\one{f}).
\end{align*}
 \endproof

Next, for any coquasitriangular Hopf algebra $H$, one has a {\em quantum Killing form}  $\Q:H\to H^*$ given by $\Q(h)(g)=\R(g{}_{\scriptscriptstyle{(1)}} \tens h{}_{\scriptscriptstyle{(1)}})\R(h{}_{\scriptscriptstyle{(2)}} \tens g{}_{\scriptscriptstyle{(2)}})$ for all $h,g\in H$, see \cite{Ma:book}.  In the finite-dimensional case, $H$ is said to be {\em factorisable} if $\Q$ is a linear isomorphism.

\begin{prop} If $H$ is finite-dimensional, there is a morphism of Hopf algebroids  $\underline{H}\# H^{op}\to H^*\# H^{op}$ to the Weyl Hopf algebroid induced by $\Q$ on the base algebra. This is an isomorphism if $H$ is factorisable.
\end{prop}
\proof It is already known \cite{Ma:book} that $\Q:\underline H\to H^*$ is an algebra map (as part of a braided Hopf algebra map where $H^*$ has a transmuted coproduct) and is a comodule map, where we use $\Ad_R$ on the left and the coaction dual to the left adjoint action of $H^*$ on $H^*$. Thus, we only need to check that this map is a module map for the $H$ action part of the Drinfeld-Yetter module. Thus,
\begin{align*}\Q(f\ra h)(g)&=\Q(f\ra h\tens g)=\Q(f{}_{\scriptscriptstyle{(2)}}\tens g)\R(h{}_{\scriptscriptstyle{(1)}}\tens f{}_{\scriptscriptstyle{(1)}})\R(f{}_{\scriptscriptstyle{(3)}}\tens h{}_{\scriptscriptstyle{(2)}})\\
&=\R(g{}_{\scriptscriptstyle{(1)}}\tens f{}_{\scriptscriptstyle{(2)}})\R(f{}_{\scriptscriptstyle{(3)}}\tens g{}_{\scriptscriptstyle{(2)}})\R(h{}_{\scriptscriptstyle{(1)}}\tens f{}_{\scriptscriptstyle{(1)}})\R(f{}_{\scriptscriptstyle{(4)}}\tens h{}_{\scriptscriptstyle{(2)}})\\
&=\R(h{}_{\scriptscriptstyle{(1)}} g{}_{\scriptscriptstyle{(1)}}\tens f{}_{\scriptscriptstyle{(1)}})\R(f{}_{\scriptscriptstyle{(2)}}\tens h{}_{\scriptscriptstyle{(2)}} g{}_{\scriptscriptstyle{(2)}})\\
&=\Q(f\tens hg)=\<\Q(f){}_{\scriptscriptstyle{(1)}},h\>\<\Q(f){}_{\scriptscriptstyle{(2)}},g\>=\<\Q(f)\ra h,g\>
\end{align*}
for all $f,g,h\in H$. Hence $\Q$ is a map of braided-commutative algebras in $\DYH$. The resulting action Hopf algebroids on the two sides depend only on this and hence are isomorphic in the factorisable case. Explicitly $\Q\tens\id$ connects the two braided-Hopf algebras. \endproof

\begin{cor}Let  $G(\underline H)$ be the group of counital invertible elements  of $\underline{H}$. There is a group homomorphism $\varrho: G(\underline H)\to Z^1_\ra(H, \underline{H})$ sending $\beta\in G(\underline H)$ to $\varrho_{\beta}\in Z^1_\ra(H, \underline{H})$ given by  $\varrho_{\beta}(h)=(\beta\ra h)\beta^{-1}$. This is a  group  isomorphism if $H$ is factorisable.\end{cor}
\begin{proof}
First we check that $\varrho$ gives a well-defined bisection,
\begin{align*}
    (\varrho_{\beta}(h)\ra g\o ) \varrho_{\beta}(g\t )=(((\beta\ra h)\beta^{-1})\ra g\o ) (\beta\ra g\t  )\beta^{-1}=((\beta\ra h)\ra g)\beta^{-1}=\varrho_{\beta}(hg).
\end{align*}
Moreover,  $\psi_{\beta}: f\mapsto \zero{f}\varrho_{\beta}(f\o )$ is invertible with inverse  $\psi_{\beta^{-1}}$. Thus,
\begin{align*}
    \psi_{\beta}\circ\psi_{\beta^{-1}}(f)=&\psi_{\beta}(\zero{f}\varrho_{\beta^{-1}}(f\o ))=(\zero{f}\zero{(\beta^{-1}\ra f\t )}\zero{\beta})(\beta\ra(f\o (\beta^{-1}\ra f\t )\o \beta\o ))\beta^{-1}\\=&(\zero{f}(\beta\ra f\o )(\beta^{-1}\ra f\t )\beta)\beta^{-1}=f
\end{align*}
for all $f\in \underline{H}$, and similarly on the other side.  Moreover, if $\alpha$ is another counital invertible element of $\underline{H}$ then
\begin{align*}
   ( \varrho_{\alpha}\bullet \varrho_{\beta})(h)=&\varrho_{\alpha}(h\o )\psi_{\alpha}( \varrho_{\beta}(h\t ))
    =(\alpha\ra h\o )\alpha^{-1}(\zero{(\beta\ra h\t )\beta^{-1})}(\alpha\ra ((\beta\ra h\t )\beta^{-1})\o )\alpha^{-1}\\
    =&(\alpha\ra h\o )(\beta\ra h\t )\beta^{-1}\alpha^{-1}=\varrho_{\alpha\beta}(h)
\end{align*}
so that $\varrho: \beta\mapsto \varrho_{\beta}$ is a group homomorphism.

Next, we show in the factorisable case that $\varrho$ is an isomorphism, by showing that the  diagram 
\[
\begin{tikzcd}
  &\{\alpha\in \udh\ |\ \alpha\,\,\, \textup{is counital and invertible}\} \arrow[d, "\Q"] \arrow[r, "\varrho"] & Z^1_\ra(H,\udh) \arrow[d, "\Q_{*}"] \\
   & \{\beta\in H^{*}\ |\ \beta\,\,\, \textup{is counital and invertible}\}  \arrow[r, "\Psi"] & Z^1_\ra(H,H^*)
\end{tikzcd}
\]
commutes and that the bottom and vertical maps are isomorphisms. Here, $\Psi$ is an isomorphism by Proposition \ref{Zaut}. The left map is the restriction of $\Q$ to the stated subset of $\udh$. Given a  counital and invertible $\alpha\in \udh$, its image $\Q(\alpha)$ is also invertible since  $\Q$ is an algebra map. Moreover, $\Q(\alpha)$ is counital since $\varepsilon(\Q(\alpha))=\Q(\alpha)(1)=\CR(1\ot \alpha\o)\CR(\alpha\t\ot 1)=\varepsilon(\alpha)=1$. So the left map is an isomorphism. The map $\Q_{*}:Z^1_\ra(H,\udh)\to Z^1_\ra(H,H^*)$ is composition by $\Q$, hence is an isomorphism. It remains to show that the diagram commutes. Thus, for $\alpha$ a counital and invertible element in $\udh$, we have
\begin{align*}
    \Psi^{-1}(\Q_{*}(\varrho_{\alpha}))(h)=&\varepsilon((\Q_{*}(\varrho_{\alpha}))(h))=\varepsilon(\Q(\varrho_{\alpha}(h)))=\varepsilon(\Q((\alpha\ra h)\alpha^{-1}))\\
    =&\varepsilon(\Q(\alpha\t\,\CR(h\o\ot \alpha\o)\CR(\alpha\th\ot h\t)\alpha^{-1}))\\
    =&\Q(\alpha\t\,\CR(h\o\ot \alpha\o)\CR(\alpha\th\ot h\t)\alpha^{-1})(1)\\
    =&\CR(h\o\ot \alpha\o)\CR(\alpha\t\ot h\t)=\Q(\alpha)(h),
\end{align*}
where the first step is given by Proposition \ref{Zaut}.
\end{proof}

In the infinite-dimensional essentially factorisable case such as $\CO_q[SL_2]$, we should therefore view $B_q[SL_2]\# \CO_q[SL_2]^{op}$ as essentially the Weyl algebra $U_q(sl_2)\# \CO_q[SL_2]^{op}$ for generic $q$, but with very different classical limit as $q\to 1$. It also has the merit of not requiring a duality pairing as needed for the Weyl Hopf algebroid. To describe its structure explicitly, we use matrix generators $t^i{}_j$ for $\CO_q[SL_2]$ and $u^i{}_j$ for $B_q[SL_2]$, which is the standard description of $\underline{\CO_q[SL_2]}$, see \cite{Ma:book}. Then the cross relations are
\[ t^i{}_j u^k{}_l=     (u^k{}_l\ra t^i{}_m) t^m{}_j   =u^p{}_q  t^m{}_j\R(t^i{}_n\tens t^k{}_p)\R(t^q{}_l\tens t^n{}_m)=u^p{}_q  t^m{}_j R^i{}_n{}^k{}_pR^q{}_l{}^n{}_m \]
in terms of $R$-matrices defined as the value of $\R$ on the generators. We used that $u^i{}_j=t^i{}_j$ when the vector spaces of $B_q[SL_2]$ and $\CO_q[SL_2]$ are identified as part of the transmutation procedure \cite{Ma:book}. Note that the relevant coproduct on $u^i{}_j$ is the one retained from $H=\CO_q[SL_2]$, not the braided one. These relations can be written in compact form as
\[ {\bf t}{}_1{\bf u}_2= R {\bf u}_2 R_{21} {\bf t}_1,\]
where the numerical indices refer to the position of matrix indices in $M_n\tens M_n$ (with values in the algebra). In the same notation, the quadratic relations of the two subalgebras appear as $R{\bf t}_2{\bf t}_1={\bf t}_1{\bf t}_2 R$ for $\CO_q[SL_2]^{op}$ and $R_{21}{\bf u}_1 R{\bf u}_2={\bf u}_2 R_{21}{\bf u}_1 R$ for $B_q[SL_2]$, see \cite{Ma:book}. (There are also $q$-determinant relations on both factors.) The source and target maps are
\begin{align}
    s(u^i{}_j)=u^i{}_j\#1, \qquad t(u^i{}_j)=u^k{}_l\# (St^i{}_k)t^l{}_j
\end{align}
and the coproduct is
\begin{align}
    \Delta({\bf u}_1\#{\bf t}_2)={\bf u}_1\#{\bf t}_2\ot_{B_q[SL_2]}1\#{\bf t}_2.
\end{align}
The precise computation of the bisections and cohomologies for this case of $q$-deformation quantum groups is deferred to further work.

\section{\bf Dual constructions }\label{secdual}

To round off the paper, we construct dual versions of some of the main results above, starting with a preliminary section on duality in the left-finite or right-finite case.

\subsection{Duality of bialgebroids} Following  \cite{schau1}, a left bialgebroid $\cL$ is called left-finite if it is finitely generated projective as a right $B$-module induced by the target map on the left. This means that there are
$x_{i}\in \cL$ and $x^{i}\in \cL^{\vee}:=\Hom_{B}(\cL, B)$ (the collection of right $B$ module map in the sense of (\ref{eq:rbgd.bimod})) such that  $X=\sum_i t(x^{i}(X))x_{i}$  for all $X\in \cL$.  Let $\cL$ be such a left-finite left bialgeboid over $B$. Then $\cL^{\vee}$ is a left bialgebroid over $B$ with product  given by
\begin{align}\label{equ. product on the right dual bialgebroid}
    (\sigma\eta)(X):=\sigma(s(\eta(\one{X}))\two{X})
\end{align}
for all $X\in \cL$ and $\sigma$, $\eta\in \cL^{\vee}$.  We omit writing $\ast$ explicitly for this convolution-type product. The unit is just the counit of $\cL$. The source and target maps are given by 
\begin{align}\label{equ. source and target map on the right dual bialgebroid}
     s(b)(X):=b\varepsilon(X),\quad t(b)(X):=\varepsilon(Xt(b))=\varepsilon(Xs(b))
  \end{align}
for $X\in \cL$ and $b\in B$.
Before we describe the coproduct, we first observe that since $\cL$ is left-finite, there is an isomorphism $\phi:  \cL^{\vee}\ot_{B}\cL^{\vee}\to \Hom_{B}(\cL\ot_{B^{op}}\cL, B)$ given by
\begin{align}\label{equ. isomorphism of right dual}
\phi(\sigma\ot_{B}\eta)(X\ot_{B^{op}}Y):=\sigma(Xt(\eta(Y))),
\end{align}
where $T\in\Hom_{B}(\cL\ot_{B^{op}}\cL, B)$ means  $T(t(b)X\ot_{B^{op}}Y)=T(X\ot_{B^{op}}Y)b$.
The inverse of this map is
\begin{align}\label{equ. inverse isomorphism of right dual}
\phi^{-1}(T):=T((\ )\ot_{B^{op}}x_{i})\ot_{B}x^{i}.
\end{align}
This isomorphism restricts to an isomorphism   $\hat{\phi}:\cL^{\vee}\times_{B}\cL^{\vee}\to \Hom_{B}(\cL\ot_{B^{e}}\cL, B)$,
where the balanced tensor product in $\cL^{\vee}\times_{B}\cL^{\vee}$ is given in Definition \ref{def:right.bgd}~(i). The balanced tensor product in $\Hom_{B}(\cL\ot_{B^{e}}\cL, B)$ is induced by $s$ and $t$ (so $Xs(a)t(b)\ot_{B^{e}}Y=X\ot_{B^{e}}s(a)t(b)Y$, for all $X, Y\in \cL$ and $a, b\in B$). Given $\sigma\in \cL^{\vee}$, define $\tilde{\sigma}\in \Hom_{B}(\cL\ot_{B^{e}}\cL, B)$ by
\begin{align}\label{equ. tilde sigma}
    \tilde{\sigma}(X\ot_{B^{e}}Y)=\sigma(XY).
\end{align}
The coproduct of $\sigma$ can now be given as
\begin{align}
   \Delta(\sigma):=\hat\phi^{-1}(\Tilde{\sigma}),
\end{align}
 or equivalently,
\begin{align}\label{equ. coproduct of the right dual bialgebroid}
    \one{\sigma}(X t\circ\two{\sigma}(Y))=\sigma(XY).
\end{align}
The counit is just the evaluation on the unit of $1_{\cL}$. 

In the following, we  write
\begin{align}
    \lan \sigma | X\ran:=\sigma(X).
\end{align}
for $X\in \cL$ and $\sigma\in \cL^{\vee}$. We also let $\Lambda=\cL^\vee$ and note that it is a right-finite left bialgebroid. Right-finite here means that $\Lambda$ is finitely generated projective as a left $B$ module induced by the source map on the left, meaning that there are
$\rho_{i}\in \Lambda$ and $\rho^{i}\in {}^{\vee}\Lambda$ (the collection of left module maps) such that for any $\sigma\in \Lambda$, $\sigma=s(\rho^{i}(\sigma))\rho_{i}$. 

Given a right-finite left bialgebroid $\Lambda$, we have ${}^\vee\Lambda$ a left-finite left bialgebroid over $B$. The product is given by
\begin{align}\label{equ. product of left dual bialgebroid}
    (XY)(\sigma):=(X t(Y(\two{\sigma})))(\one{\sigma}),
\end{align}
where $X, Y\in {}^{\vee}\Lambda$ and $\sigma\in \Lambda$. The source and target maps are given by
\begin{align}\label{equ. source and target map of left dual}
    s(b)(\sigma)=\varepsilon(\sigma  s(b))=\varepsilon(\sigma  t(b)),\quad t(b)(\sigma)=\varepsilon(\sigma)b
\end{align}
for all $b\in B$. In this context, we similarly write $\lan \sigma | X\ran:=X(\sigma)$  for $X\in {}^{\vee}\Lambda$ and $\sigma\in\Lambda$. Then both (\ref{equ. coproduct of the right dual bialgebroid}) and (\ref{equ. product of left dual bialgebroid}) can be written in the same form as
\begin{align}\label{equ. product on L}
    \lan \sigma |XY\ran=\lan\one{\sigma}|Xt\lan\two{\sigma}|Y\ran\ran=\lan t\lan\two{\sigma}|Y\ran\one{\sigma}|X\ran
\end{align}
for $X,Y,\sigma$ in the relevant spaces. Moreover, the coproduct on $\VL$ is characterised by
\begin{align}\label{equ. coproduct on L}
    \lan\sigma\eta|X\ran=\lan\sigma| s\lan\eta|\one{X}\ran\two{X}\ran=\lan\sigma s\lan\eta|\one{X}\ran| \two{X}\ran.
\end{align}

We can summarise the above constructions as follows:
\begin{thm}\label{thm. bialgebroids dual}{\rm \cite[Thm.~5.13]{schau1}.} If $\cL$  is a left-finite left bialgebroid then $\LV$  is a right-finite left  bialgebroid. If  $\Lambda$ is a right-finite left bialgebroid then  $\VL$ is a left-finite left bialgebroid. Moreover, we can respectively identify ${}^\vee(\cL^\vee)=\cL$ and  $({}^\vee\Lambda)^\vee=\Lambda$.
\end{thm}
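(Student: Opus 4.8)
The plan is to verify directly that the structures written down in (\ref{equ. product on the right dual bialgebroid})--(\ref{equ. coproduct of the right dual bialgebroid}) and (\ref{equ. product of left dual bialgebroid})--(\ref{equ. coproduct on L}) satisfy the axioms of Definition~\ref{def:right.bgd}, exploiting the nondegenerate pairing $\lan-|-\ran$ and the dual bases supplied by the finiteness hypotheses. The whole argument is a transposition: each bialgebroid axiom for the dual should become, under the pairing, an axiom already known for $\cL$ (resp.\ $\Lambda$). Because the products and coproducts on both $\LV$ and $\VL$ are encoded symmetrically by (\ref{equ. product on L}) and (\ref{equ. coproduct on L}), I would treat the two dualities in parallel, proving the statement for $\LV$ in detail and noting that $\VL$ follows by the mirror-image computation.

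First I would fix notation and finiteness. Left-finiteness of $\cL$ gives $x_i\in\cL$, $x^i\in\LV$ with $X=\sum_i t(x^i(X))x_i$; this dual basis is exactly what makes the map $\phi$ of (\ref{equ. isomorphism of right dual}) a bijection with inverse (\ref{equ. inverse isomorphism of right dual}), and what lets its restriction $\hat{\phi}$ identify the Takeuchi product $\LV\times_{B}\LV$ with $\Hom_{B}(\cL\ot_{B^e}\cL,B)$. I would then check the $B^e$-ring structure: that the source and target maps (\ref{equ. source and target map on the right dual bialgebroid}) have commuting images, and that the convolution-type product (\ref{equ. product on the right dual bialgebroid}) is associative and unital with unit $\varepsilon$. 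Associativity is just coassociativity of $\cL$ read off through the pairing, while the commutativity of the images of $s,t$ and the module identities follow from the two expressions for $t(a)$ in (\ref{equ. source and target map on the right dual bialgebroid}) together with the counit axioms of $\cL$.

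Next I would establish the $B$-coring structure. Coassociativity and counitality of $\Delta\sigma=\phi^{-1}(\tilde{\sigma})$, with $\tilde{\sigma}$ as in (\ref{equ. tilde sigma}), amount under $\phi$ to associativity and unitality of the product of $\cL$; the point is that (\ref{equ. coproduct of the right dual bialgebroid}) characterises $\Delta\sigma$ through $\lan\one{\sigma}|X\,t\lan\two{\sigma}|Y\ran\ran=\lan\sigma|XY\ran$, so the coring axioms transpose cleanly. The compatibility conditions (i) and (ii) of Definition~\ref{def:right.bgd} are the heart of the matter: condition (ii), that the counit is a left character, follows from the evaluation formula and the algebra axioms of $\cL$, while for (i) I would use $\hat{\phi}$ to reinterpret ``$\Delta$ lands in $\times_{B}$ and is an algebra map'' as the single transpose statement that $\sigma\mapsto\tilde{\sigma}$ is multiplicative for the convolution $\star$, i.e.\ $\widetilde{\sigma\eta}=\tilde{\sigma}\star\tilde{\eta}$, which in turn follows from $\Delta$ of $\cL$ being an algebra map together with (\ref{equ. product on L}).

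Finally I would record finiteness of the dual and biduality. Right-finiteness of $\LV$ (finitely generated projective as a left $B$-module via its source) is obtained by dualising the dual basis $\{x_i,x^i\}$, with the $x^i$ and evaluation-at-$x_i$ furnishing a dual basis for $\LV$. For the canonical identifications I would show that the evaluation map $\cL\to{}^{\vee}(\LV)$, $X\mapsto(\sigma\mapsto\lan\sigma|X\ran)$, is an isomorphism of $B$-modules by biduality of finitely generated projectives, and then check that it intertwines products, coproducts, source, target and counit using (\ref{equ. product on L}) and (\ref{equ. coproduct on L}); the same argument gives $(\VL)^{\vee}=\Lambda$. The main obstacle is condition (i) for the dual coproduct: verifying that $\Delta$ corestricts to an algebra map into $\times_{B}$ requires careful tracking of the several balanced tensor products over $B$, $B^{op}$ and $B^e$ and of the module structures on the $\Hom$ spaces, and it is precisely here that the isomorphisms $\phi$ and $\hat{\phi}$ must be deployed to convert a multiplicativity statement into the transpose of associativity in $\cL$.
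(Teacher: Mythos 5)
Your proposal is correct and follows essentially the same route as the paper, which records exactly the structure maps (\ref{equ. product on the right dual bialgebroid})--(\ref{equ. coproduct on L}) and the isomorphisms $\phi$, $\hat{\phi}$ as the ingredients of a transposition argument and then defers the axiom-checking to Schauenburg's Theorem 5.13. Your outline — reading each bialgebroid axiom of the dual through the pairing as the corresponding axiom of $\cL$ (resp.\ $\Lambda$), with the dual bases and $\hat{\phi}$ handling the Takeuchi condition and biduality of finitely generated projectives handling the identifications — is precisely the intended proof.
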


Schauenburg also later extended this result to (anti)-left Hopf algebroids in \cite{schau3} by abstract arguments. A  version of the following more explicit statement can also be found in  \cite{K18}, obtained there by constructing the translation map. 

\begin{prop}{\rm \cite{K18,schau3}}. \label{thm. Hopf algebroids dual} If $\cL$  is a left-finite left Hopf algebroid then $\LV$  is a right-finite  anti-left Hopf algebroid. If  $\Lambda$ is a right-finite anti-left Hopf algebroid then  $\VL$ is a left-finite left Hopf algebroid.
\end{prop}
\begin{proof}
We construct the following commutative diagram:
\[
\begin{tikzcd}
  &\LV\ot^{B^{op}}\LV \arrow[d, "\psi"] \arrow[r, "\mu"] & \LV\ot_{B}\LV \arrow[d, "\phi"] \\
   & \Hom_{B}(\cL\ot_{B}\cL, B)  \arrow[r, "\lambda^{*}"] & \Hom_{B}(\cL\ot_{B^{op}}\cL, B),
\end{tikzcd}
\]
where $\mu$ is the map given in Definition~\ref{defHopf}  and we want to show that it is invertible. Here, $\phi$ is the isomorphism given by (\ref{equ. isomorphism of right dual}). The domain  of $\lambda^{*}$ is  a  right $B$-module by $\tilde{T}(X\ot_{B}Y)b=\tilde{T}(X\ot_{B}t(b)Y)$ for $\tilde{T}\in \Hom_{B}(\cL\ot_{B}\cL, B)$. Moreover, $\lambda^*$ is given by the pull back of $\lambda$ in Definition \ref{defHopf}, 
\begin{align}
    \lambda^{*}(\tilde{T})(X\ot_{B^{op}}Y)=&\tilde{T}(\one{X}\ot_{B}\two{X}Y).
\end{align}
As a result, $\lambda^{*}$ is an invertible map with  inverse,
\begin{align}
    \lambda^{*-1}(T)(X\ot_{B}Y)=T(X_{+}\ot_{B^{op}}X_{-}Y).
\end{align}
Finally, we define $\psi$ by
\begin{align}\label{equ. isomorphic of psi}
    \psi(\sigma\ot^{B^{op}}\eta)(X\ot_{B}Y):=\eta(s\circ\sigma(X)Y)=\lan\eta|s\lan\sigma|X\ran Y\ran. 
\end{align}
It is not hard to check that this is well defined with respect to all the balanced tensor products. Using that $\cL$ is left-finite, we write down the inverse of $\psi$ as
\begin{align}
    \psi^{-1}(\tilde{T}):=x^{i}\ot^{B^{op}}\tilde{T}(x_{i}\ot_{B}(\ )),
\end{align}
which is clearly well defined since $\tilde{T}$ is $B^{op}$-linear on the second tensor factor, and check that
\begin{align*}
    \psi\circ\psi^{-1}(\tilde{T})(X\ot_{B}Y)=&\tilde{T}(x_{i}\ot_{B}s(x^{i}(X))Y)=\tilde{T}(t(x^{i}(X))x_{i}\ot_{B}Y)=\tilde{T}(X\ot_{B}Y),\\
    \psi^{-1}\circ\psi(\sigma\ot^{B^{op}}\eta)=&x^{i}\ot^{B^{op}}\eta(s\circ\sigma(x_{i})(\ ))
    =(s\circ\sigma(x_{i})) x^{i}\ot^{B^{op}}\eta
    =\sigma\ot^{B^{op}}\eta.
\end{align*}
The last step uses that
\begin{align*}
    ((s\circ\sigma(x_{i})) x^{i})(X)=\sigma(x_{i})x^{i}(X)=\sigma(t(x^{i}(X))x_{i})=\sigma(X).
\end{align*}
Finally, we check that the diagram commutes. For $\sigma\ot^{B^{op}}\eta\in \LV\ot^{B^{op}}\LV$ and $X\ot_{B^{op}}Y\in \cL\ot_{B^{op}}\cL$, we have
\begin{align*}     \lambda^{*}\circ\psi(\sigma\ot^{B^{op}}\eta)(X\ot_{B^{op}}Y)=&\psi(\sigma\ot^{B^{op}}\eta)(X\o\ot_{B}X\t\,Y)
=\<\eta|s\<\sigma|X\o\>X\t\,Y\>\\
=&\<\eta\o|s\<\sigma|X\o\>X\t\,t\<\eta\t|Y\>\>=\<\eta\o\,\sigma|X\,t\<\eta\t|Y\>\>\\
=&\phi(\eta\o\,\sigma\ot_{B} \eta\t)(X\ot_{B^{op}}Y)=\phi\circ \mu(\sigma\ot^{B^{op}}\eta)(X\ot_{B^{op}}Y).
\end{align*}
It follows that $\mu$ is invertible as the other maps are.

Similarly, if $\Lambda$ is a right-finite anti-left Hopf  algebroid, one has a commutative diagram
\[
\begin{tikzcd}
  &\VL\ot_{B^{op}}\VL \arrow[d, "\tilde{\phi}"] \arrow[r, "\lambda"] & \VL\ot_{B}\VL \arrow[d, "\tilde{\psi}"]\\
   & {}_{B}\Hom(\Lambda\ot_{B}\Lambda, B)  \arrow[r, "\mu^{*}"] & {}_{B}\Hom(\Lambda\ot^{B^{op}}\Lambda, B),
\end{tikzcd}
\]
with all maps isomorphisms. Here, $\mu^{*}$ is the pull back of $\mu$, $F\in {}_{B}\Hom(\Lambda\ot_{B}\Lambda, B)$ means a left $B$-module map i.e.,
\begin{align}
    F(s(b)\sigma\ot_{B}\eta)=bF(\sigma\ot_{B}\eta),
\end{align}
and similarly $\tilde{F}\in {}_{B}\Hom(\Lambda\ot^{B^{op}}\Lambda, B)$ means
\begin{align}
    \tilde{F}(\sigma\ot^{B^{op}}s(b)\eta)=b\tilde{F}(\sigma\ot^{B^{op}}\eta),
\end{align}
for all $b\in B$ and $\sigma, \eta\in \Lambda$. For the vertical maps, we have
\begin{align}
    \tilde{\phi}(X\ot_{B^{op}}Y)(\sigma\ot_{B}\eta):=\lan\sigma|X t\lan\eta|Y\ran\ran=\lan t\lan\eta|Y\ran \sigma|X \ran,\\
    \tilde{\psi}(X\ot_{B}Y)(\sigma\ot^{B^{op}}\eta):=\lan\eta|s\lan\sigma|X\ran Y\ran=\lan\eta  s\lan\sigma|X\ran|Y\ran, 
\end{align}
with inverses
\begin{align}
    \tilde{\phi}^{-1}(F)=F((\ )\ot_{B}\rho_{i})\ot_{B^{op}}\rho^{i},\quad
    \tilde{\psi}^{-1}(\tilde{F})=\rho^{i}\ot_{B}\tilde{F}(\rho_{i}\ot^{B^{op}}(\ )).
\end{align}
\end{proof}

\subsection{Non-Abelian second cohomology $\CH^2(\cL)$}\label{seccoh1}
For a left bialgebroid $\cL$ over an algebra $B$, we define $C^{1}(\cL)$ to be the group of invertible elements $U\in \cL$, such that
\begin{equation}\label{C1L} \varepsilon(U)=1,\quad Us(b)=s(b)U,\quad Ut(b)=t(b)U\end{equation}
for all $b\in B$.
We also define
\[Z^{1}(\cL)=\{U\in C^{1}(\cL)\ | \quad U \quad\textrm{grouplike } \}.\]

\begin{lem}
    If $\cL$ is an anti-left Hopf  algebroid then $Z^{1}(\cL)$ is a group with
    \[U^{-1}=s(\varepsilon(U_{[+]}))U_{[-]}
    \]
    for all $U\in Z^{1}(\cL)$. Similarly, if $\cL$ is a left Hopf algebroid then $Z^{1}(\cL)$ is a group with
    \[U^{-1}=t(\varepsilon(U_{+}))U_{-}
    \]
    for all $U\in Z^{1}(\cL)$.
\end{lem}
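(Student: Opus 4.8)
The plan is to show that $Z^{1}(\cL)$ is a group under the \emph{algebra multiplication} of $\cL$, with unit $1_{\cL}$, and that for $U\in Z^{1}(\cL)$ the ordinary algebra inverse $U^{-1}$ (which exists since $C^{1}(\cL)$ consists of invertible elements) again lies in $Z^{1}(\cL)$ and is computed by the stated formula. I would treat the anti-left Hopf case in detail; the left Hopf case is identical with $\lambda^{-1}$ in place of $\mu^{-1}$.

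First I would verify closure and the unit. Since $\Delta$ corestricts to an algebra map into $\cL\times_{B}\cL$ with factorwise product, grouplikeness is preserved: $\Delta(UV)=(U\ot_{B}U)(V\ot_{B}V)=UV\ot_{B}UV$. Counitality is preserved by the left character property, $\varepsilon(UV)=\varepsilon(U\,s(\varepsilon(V)))=\varepsilon(U)=1_{B}$, while invertibility and commutation with $s(B),t(B)$ are obviously preserved. The element $1_{\cL}$ clearly satisfies all the defining conditions, so it serves as the group unit.

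Next, for $U\in Z^{1}(\cL)$ I would show that the algebra inverse $U^{-1}$ lies in $Z^{1}(\cL)$; this already establishes the group property. The one nontrivial point is grouplikeness of $U^{-1}$. Commuting with $s(B),t(B)$ passes to $U^{-1}$, and then, using the balancing $t(b)X\ot_{B}Y=X\ot_{B}s(b)Y$, one checks $U^{-1}\ot_{B}U^{-1}\in\cL\times_{B}\cL$. Because $\Delta$ is an algebra map, $\Delta(U^{-1})=\Delta(U)^{-1}$ computed in $\cL\times_{B}\cL$; since $U^{-1}\ot_{B}U^{-1}$ is manifestly the factorwise inverse of $\Delta(U)=U\ot_{B}U$, uniqueness of inverses gives $\Delta(U^{-1})=U^{-1}\ot_{B}U^{-1}$. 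Counitality of $U^{-1}$ follows from $\varepsilon(U^{-1})=\varepsilon(UU^{-1})=\varepsilon(1_{\cL})=1_{B}$, again via the left character and centrality.

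It remains to match the explicit formula with $U^{-1}$. Using $\Delta U=U\ot_{B}U$ and $UU^{-1}=1_{\cL}$, a direct computation gives $\mu(U^{-1}\ot^{B^{op}}U)=\one{U}U^{-1}\ot_{B}\two{U}=UU^{-1}\ot_{B}U=1_{\cL}\ot_{B}U$, whence $\mu^{-1}(1_{\cL}\ot_{B}U)=U^{-1}\ot^{B^{op}}U$, i.e. $U_{[-]}\ot^{B^{op}}U_{[+]}=U^{-1}\ot^{B^{op}}U$. Evaluating on this representative, $s(\varepsilon(U_{[+]}))U_{[-]}=s(\varepsilon(U))U^{-1}=U^{-1}$; in the left Hopf case the analogous identity $\lambda(U\ot_{B^{op}}U^{-1})=U\ot_{B}1_{\cL}$ gives $U_{+}\ot_{B^{op}}U_{-}=U\ot_{B^{op}}U^{-1}$ and hence $t(\varepsilon(U_{+}))U_{-}=U^{-1}$. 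The point I would be most careful about is that the assignment $A\ot^{B^{op}}C\mapsto s(\varepsilon(C))A$ is \emph{not} well defined on all of $\cL\ot^{B^{op}}\cL$ (this would force $\varepsilon(Cs(a))=\varepsilon(C)a$, which fails already for the Connes--Moscovici bialgebroid), so the formula must be read on the canonical representative produced by $\mu^{-1}$; I would therefore present the final step as the explicit computation of $\mu^{-1}(1_{\cL}\ot_{B}U)$ rather than as applying a map. Modulo this, the entire content reduces to the elementary observation that for grouplike invertible $U$ the translation maps are given by $U^{\mp 1}$ on the appropriate tensor leg.
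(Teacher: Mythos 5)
Your proof is correct, but it takes a genuinely different route from the paper's. The paper proves the inverse formula head-on: it first checks that $s(\varepsilon(U_{[+]}))U_{[-]}$ is well defined over the balanced tensor product (using that $U$ commutes with $t(B)$ together with $\varepsilon(Xs(b))=\varepsilon(Xt(b))$), then that it commutes with $s(B)$ and $t(B)$, then verifies $U\cdot s(\varepsilon(U_{[+]}))U_{[-]}=1=s(\varepsilon(U_{[+]}))U_{[-]}\cdot U$ by Sweedler calculus with the identities \eqref{equ. inverse mu 2}, \eqref{equ. inverse mu 5}, \eqref{equ. inverse mu 8}, and finally counitality; it does not explicitly address closure of $Z^{1}(\cL)$ under products or grouplikeness of the inverse. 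You instead exploit that $U$ is already invertible in $\cL$ by the definition of $C^{1}(\cL)$, so the whole content is (a) closure of $Z^1(\cL)$ under multiplication and algebra inversion and (b) identification of the stated expression with the algebra inverse; your one-line computation $\mu(U^{-1}\ot^{B^{op}}U)=\one{U}U^{-1}\ot_{B}\two{U}=1\ot_{B}U$ (and $\lambda(U\ot_{B^{op}}U^{-1})=U\ot_{B}1$ in the left Hopf case) then explains \emph{why} the formula holds, and your uniqueness-of-inverses argument in $\cL\times_{B}\cL$ supplies the grouplikeness of $U^{-1}$ that the paper leaves implicit. The trade-off is precisely the point you flag yourself: the assignment $A\ot^{B^{op}}C\mapsto s(\varepsilon(C))A$ is not well defined on all of $\cL\ot^{B^{op}}\cL$, so reading the lemma's formula as an unambiguous expression requires the representative-independence check that the paper carries out (its ``well defined'' computation, which again uses that $U$ commutes with $s(B)$ and $t(B)$); your reading on a chosen representative of $\mu^{-1}(1\ot_{B}U)$ avoids this but proves the formula only in that interpreted sense. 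Including a two-line version of the paper's well-definedness check would make your argument strictly stronger than the paper's while remaining shorter.
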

\begin{proof}
First, as $U$ commutes with the images of the source and target maps, by applying $\mu^{-1}$ in (\ref{X[+][-]}) on both sides of the latter two parts of (\ref{C1L}), we obtain
\[t(b)U_{[-]}\ot^{B^{op}}U_{[+]}=U_{[-]}t(b)\ot^{B^{op}}U_{[+]},\quad U_{[-]}\ot^{B^{op}}t(b)U_{[+]}=U_{[-]}\ot^{B^{op}}U_{[+]}t(b).\]
 We next see that the formula stated for $U^{-1}$ is well defined,
\[s(\varepsilon(U_{[+]}s(b)))U_{[-]}=s(\varepsilon(U_{[+]}t(b)))U_{[-]}=s(\varepsilon(t(b)U_{[+]}))U_{[-]}=s(\varepsilon(U_{[+]}))s(b)U_{[-]},\]
where the 1st step uses the definition of the counit $\varepsilon$.
 It is not hard to see that $U^{-1}$ commutes with the image of the target map, while it commutes with the image of the source map by 
    \[s(\varepsilon(U_{[+]}))U_{[-]}s(b)=s(\varepsilon(s(b)U_{[+]}))U_{[-]}=s(b)s(\varepsilon(U_{[+]}))U_{[-]}.\]
It remains to check that $U^{-1}$  is indeed the inverse of $U$. On one side, this is
\begin{align*}
    U s(\varepsilon(U_{[+]}))U_{[-]}&=s(\varepsilon(U_{[+]}))UU_{[-]}=s(\varepsilon(U\o{}_{[+]}))U\t U\o{}_{[-]}\\
    &=s(\varepsilon(U_{[+]}\o))U_{[+]}\t U_{[-]}=U_{[+]}U_{[-]}=t(\varepsilon(U))=1.
\end{align*}
Similarly on the other side,
\begin{align*}
    s(\varepsilon(U_{[+]}))U_{[-]}U=s(\varepsilon(U\t{}_{[+]}))U\t{}_{[-]}U\o=s(\varepsilon(U))=1.
\end{align*}
Finally, 
\[\varepsilon(U^{-1})=\varepsilon(U^{-1}t\circ\varepsilon(U))=\varepsilon(U^{-1}U)=1\]
so that $U^{-1}\in Z^1(\CL)$. 
\end{proof}
It is straightforward to see that the above constructions are equivalent in the left-finite or right-finite case to results in Section~\ref{secbisec}:
\begin{cor}\label{lem. dual bisection}
    If $\cL$ is a left-finite   bialgebroid over $B$ and $\Lambda=\cL^{\vee}$ then $Z^{1}(\Lambda)=Z^{1}(\cL, B)$ and $C^{1}(\Lambda)=C^{1}(\cL, B)$. If $\Lambda$ is a right-finite  left bialgebroid over $B$ and ${}^{\vee}\Lambda=\cL$ then $Z^{1}(\Lambda, B)=Z^{1}(\cL)$ and $C^{1}(\Lambda, B)=C^{1}(\cL)$.
\end{cor}

We also treat 2-cocycles on left bialgebroids in the same way. This is similar to 2-cocycles on a right bialgebroids given in \cite{Boehm}.
\begin{defi}\label{def. dual 2-cocycle}
    Let $\cL$ be a left bialgebroid over $B$. A counital  2-cocycle in $\cL$ is an invertible  element $F=F^{\alpha}\ot_{B}F_{\alpha}\in\cL\times_{B}\cL$, such that
    \begin{itemize}
        \item [(i)] $s(b)F^{\alpha}\ot_{B}t(b')F_{\alpha}=F^{\alpha}s(b)\ot_{B}F_{\alpha}t(b')$ for all $b,b'\in B$;
        \item[(ii)] $(F\ot_{B} 1)(\Delta\ot_{B} \id)(F)= (1\ot_{B} F)(\id\ot_{B}\Delta)(F)$;
        \item[(iii)] $(\varepsilon\ot_{B} \id)(F)=1=(\id\ot_{B}\varepsilon)(F)$
    \end{itemize}
    for all $b, b'\in B$. The collection of such counital 2-cocycles in $\cL$ will be denoted $Z^{2}(\cL)$. We will use the notation $F^{-1}=F^{-\alpha}\ot_B F_{-\alpha}$ for the inverses. 
\end{defi}

Moreover, given a 2-cocycle $F$ as above on a left bialgebroid $\CL$, there is a new left bialgebroid $\CL_F$  with coproduct
\begin{align}\label{twistcoprod}
    \Delta^{F}(X):=F\Delta(X)F^{-1},
\end{align}
for all $X\in \cL$ and the original product, unit, $s, t$. This construction is dual to cotwisting by a cocycle on $\CL$ in the sense of Definition~\ref{Lcotwist}.

\begin{lem}\label{lem. dual 2-cocycle}
    Let $\cL$ be a left-finite left bialgebroid and $\Lambda=\cL^{\vee}$. Then $Z^2(\CL,B)\simeq Z^2(\Lambda)$. Given a 2-cocycle $F\in Z^{2}(\Lambda)$, we construct a right-handed 2-cocycle  $\Gamma_{F}\in Z^{2}(\cL, B)$ on $\CL$ by
\[\Gamma_{F}(X\ot_{B^{e}}Y)=\lan F^{\alpha}|X t\lan F_{\alpha}| Y\ran\ran.\]
Similarly, $\Gamma_{F^{-1}}$ is a left handed 2-cocycle.
Moreover, we have $(\cL^{\Gamma_{F}^{-1}})^{\vee}\simeq \Lambda_{F}$.
\end{lem}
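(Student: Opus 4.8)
The plan is to observe first that both bialgebroids in the claimed isomorphism are built on the same underlying space $\Lambda=\cL^\vee$ and differ from it only in their coproducts, so the assertion reduces to a single coproduct identity. Indeed, by Theorem~\ref{thm. 2 cocycle twist1} the cotwist $\cL^{\Gamma_F^{-1}}$ changes only the product of $\cL$, keeping its coproduct, counit, source and target (and hence the right $B$-module structure via $t$, so $\cL^{\Gamma_F^{-1}}$ is still left-finite and its dual is defined). Under the identification of Theorem~\ref{thm. bialgebroids dual}, the product of $\cL$ corresponds to the coproduct of $\cL^\vee$ while the coproduct of $\cL$ corresponds to the product of $\cL^\vee$. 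Hence $(\cL^{\Gamma_F^{-1}})^\vee$ agrees with $\Lambda$ in product, unit, counit, source and target, and differs only in its coproduct. Since $\Lambda_F$ also differs from $\Lambda$ only in its coproduct~\eqref{twistcoprod}, it suffices to show that the identity map on $\Lambda$ intertwines the two coproducts.

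Next I would set up the comparison through the isomorphism $\hat\phi$ restricting $\phi$ of~\eqref{equ. isomorphism of right dual} to $\Lambda\times_B\Lambda\cong \Hom_B(\cL\ot_{B^e}\cL,B)$. Under $\hat\phi$ one has $\hat\phi(F)=\Gamma_F$ by the very definition of $\Gamma_F$, hence $\hat\phi(F^{-1})=\Gamma_F^{-1}$, and $\hat\phi(\Delta\sigma)=\tilde\sigma$ with $\tilde\sigma(X,Y)=\sigma(XY)$ by the defining relation~\eqref{equ. coproduct of the right dual bialgebroid}. The crucial structural input is that $\hat\phi$ is an \emph{anti}-algebra map from $\Lambda\times_B\Lambda$ with factorwise multiplication to $\Hom_B(\cL\ot_{B^e}\cL,B)$ with the convolution product $\star$; this I would verify directly from the product formula~\eqref{equ. product on the right dual bialgebroid} on $\Lambda$ together with $\Delta s(b)=s(b)\ot 1$ and $\Delta t(b)=1\ot t(b)$, the point being that the handedness of the product on $\cL^\vee$ reverses the order of the factors. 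Applying this to $\Delta^F\sigma=F\Delta(\sigma)F^{-1}$ then gives $\hat\phi(\Delta^F\sigma)=\Gamma_F^{-1}\star\tilde\sigma\star\Gamma_F$, that is
\[ \phi(\Delta^F\sigma)(X\ot Y)=\Gamma_F^{-1}(\one X,\one Y)\,\sigma(\two X\,\two Y)\,\Gamma_F(\three X,\three Y). \]

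On the other side, the coproduct $\bar\Delta$ of $(\cL^{\Gamma_F^{-1}})^\vee$ is characterised by $\phi(\bar\Delta\sigma)(X\ot Y)=\sigma(X\cdot_{\Gamma_F^{-1}}Y)$, the pairing of $\sigma$ with the twisted product of $\cL$. I would then expand
\[ X\cdot_{\Gamma_F^{-1}}Y=s(\Gamma_F^{-1}(\one X,\one Y))\,t(\Gamma_F(\three X,\three Y))\,\two X\,\two Y \]
from~\eqref{twistprod}, using $(\Gamma_F^{-1})^{-1}=\Gamma_F$, and move the source and target factors out of $\sigma$ via the commutativity of the images of $s$ and $t$ together with the module and left-character properties, landing exactly on the expression displayed above for $\phi(\Delta^F\sigma)$. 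Comparing the two and invoking injectivity of $\phi$ yields $\bar\Delta=\Delta^F$, so the identity is the desired isomorphism. The main obstacle I anticipate is precisely this bookkeeping of handedness: one must confirm that it is the cotwist by $\Gamma_F^{-1}$ --- not by $\Gamma_F$ --- that dualises to the twist by $F$, which hinges on $\hat\phi$ reversing the order of multiplication and on the fact, already recorded in this lemma, that $\Gamma_F$ is right-handed while $\Gamma_F^{-1}=\Gamma_{F^{-1}}$ is the left-handed cocycle required by Theorem~\ref{thm. 2 cocycle twist1}. Getting the inverse and the order of the convolution factors to match on the nose, while respecting the various balanced tensor products and the noncommutativity of $B$, is the only delicate point; the remaining verifications are routine applications of~\eqref{equ. product on L}--\eqref{equ. coproduct on L} and the left-character axioms.
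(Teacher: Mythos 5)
Your treatment of the final claim $(\cL^{\Gamma_F^{-1}})^{\vee}\simeq \Lambda_F$ is essentially the paper's own argument: the paper also reduces everything to the single identity $\hat{\phi}(\Delta^F\sigma)(X\ot Y)=\Gamma_F^{-1}(\one{X},\one{Y})\,\sigma(\two{X}\two{Y})\,\Gamma_F(\three{X},\three{Y})=\lan\sigma|X\cdot_{\Gamma_F^{-1}}Y\ran$, obtained by expanding $F\Delta(\sigma)F^{-1}$ through the product formula \eqref{equ. product on the right dual bialgebroid}. You package the computation as ``$\hat{\phi}$ is an anti-algebra map from $\Lambda\times_B\Lambda$ to the convolution algebra,'' which is a clean way to organise it and correctly explains why the cotwist is by $\Gamma_F^{-1}$ rather than $\Gamma_F$; the paper just performs the expansion directly. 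Provided you actually verify the anti-multiplicativity on the Takeuchi product (it uses the condition $\sum F^{\alpha}t(b)\ot_B F_{\alpha}=\sum F^{\alpha}\ot_B F_{\alpha}s(b)$ and property (i) of $Z^2(\Lambda)$ to move the $s(\cdot)$ and $t(\cdot)$ factors around, exactly the manipulations the paper carries out term by term), this half of the proposal is sound.

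The genuine gap is that you never prove the first assertion of the lemma, namely that $\Gamma_F$ is a right-handed $2$-cocycle on $\cL$ (equivalently that $\Gamma_{F^{-1}}=\Gamma_F^{-1}$ lies in $Z^2(\cL,B)$), together with its $B$-bilinearity, normalisation and the property $\Gamma_F(X,Ys(b))=\Gamma_F(X,Yt(b))$. You explicitly defer to ``the fact, already recorded in this lemma, that $\Gamma_F$ is right-handed'' --- but that fact is part of the statement under proof, so the appeal is circular; moreover you need it \emph{before} you may invoke the cotwist construction to form $\cL^{\Gamma_F^{-1}}$ at all. This is where the paper spends most of its effort: it introduces the three-fold isomorphism $\hat{\Phi}:\Lambda\times_B\Lambda\times_B\Lambda\to {}_B\Hom_B(\cL\ot_{B^e}\cL\ot_{B^e}\cL,B)$, $\hat{\Phi}(\alpha\ot\beta\ot\gamma)(X,Y,Z)=\lan\alpha|Xt\lan\beta|Yt\lan\gamma|Z\ran\ran\ran$, and shows by a non-trivial computation that the two sides of the cocycle identity $(F\ot 1)(\Delta\ot\id)(F)=(1\ot F)(\id\ot\Delta)(F)$ are sent respectively to $\Gamma_F(t(\Gamma_F(\two{X},\two{Y}))\one{X}\one{Y},Z)$ and $\Gamma_F(X,t(\Gamma_F(\two{Y},\two{Z}))\one{Y}\one{Z})$, which is precisely the right-handed $2$-cocycle condition \eqref{equ. right handed 2-cocycle}. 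This step does not follow formally from the two-fold anti-multiplicativity of $\hat{\phi}$ and must be supplied; without it the proposal proves only the ``moreover'' clause conditionally on the rest of the lemma.
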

\begin{proof}
    The isomorphism is given by $\hat{\phi}: \Lambda\times_{B}\Lambda\to \Hom_{B}(\cL\ot_{B^{e}}\cL, B)$ in the proof of Theorem \ref{thm. Hopf algebroids dual},  according to
    \begin{align*}
        \hat{\phi}(\alpha\ot_{B}\beta)(X\ot_{B^{e}}Y)=\lan\alpha|X t\lan \beta| Y\ran\ran.
    \end{align*}
As  $s(b)F^{\alpha}\ot_{B}t(b')F_{\alpha}=F^{\alpha}s(b)\ot_{B}F_{\alpha}t(b')$, we have that $\Gamma_{F}$ is left $B$-linear and satisfies $\Gamma_{F}(X, Ys(b))=\Gamma_{F}(X, Yt(b))$. To show the 2-cocycle condition on $\Gamma_{F}$, we first observe that there is an isomorphism $\hat{\Phi}: \Lambda\times_{B}\Lambda\times_{B}\Lambda\to {}_{B}\Hom_{B}(\cL\otimes_{B^e}\cL\otimes_{B^e}\cL, B)$ given by
    \[\hat{\Phi}(\alpha\ot_{B}\beta\ot_{B}\gamma)(X\ot_{B^{e}}Y\ot_{B^{e}}Z)=\lan\alpha|X t\lan \beta|Y t\lan \gamma|Z\ran\ran\ran.\]
    We have on the one hand,
    \begin{align*}
        \hat{\Phi}(F^{\beta}F^{\alpha}\o&\ot_{B}F_{\beta}F^{\alpha}\t\ot_{B}F_{\alpha})(X\ot_{B^{e}}Y\ot_{B^{e}}Z)\\
    =&\lan F^{\beta}|s\lan F^{\alpha}\o|X\o\ran X\t t\lan F_{\beta}| s\lan F^{\alpha}\t|Y\o\ran Y\t t\lan F_{\alpha}|Z\ran\ran\ran\\
    =&\lan F^{\beta}|s\lan F^{\alpha}\o|X\o t\lan F^{\alpha}\t|Y\o\ran\ran X\t t\lan F_{\beta}|  Y\t t\lan F_{\alpha}|Z\ran\ran\ran\\
    =&\lan F^{\beta}|s\lan F^{\alpha}\o|X\o t\lan F^{\alpha}\t|Y\o t\lan F_{\alpha}|Z\ran\ran\ran X\t t\lan F_{\beta}|  Y\t \ran\ran\\
    =&\lan F^{\alpha}\o|X\o t\lan F^{\alpha}\t|Y\o t\lan F_{\alpha}|Z\ran\ran\ran\lan F^{\beta}| X\t t\lan F_{\beta}|  Y\t \ran\ran\\
    =&\lan F^{\alpha}|X\o Y\o t\lan F_{\alpha}|Z\ran\ran \lan F^{\beta}| X\t t\lan F_{\beta}|  Y\t \ran\ran\\    =&\Gamma(t(\Gamma(\two{X}, \two{Y}))\one{X}\one{Y}, Z),
    \end{align*}
and on the other hand, by similar computations,
    \[
    \hat{\Phi}(F^{\alpha}\ot_{B}F_{\beta}F_{\alpha}\o\ot_{B}F_{\beta}F_{\alpha}\t)(X\ot_{B^{e}}Y\ot_{B^{e}}Z)=\Gamma(X, t(\Gamma(\two{Y}, \two{Z}))\one{Y}\one{Z}).
    \]
To show  that $(\cL^{\Gamma_{F}^{-1}})^{\vee}\simeq \Lambda_{F}$, we have for any $\sigma\in \cL^{\vee}$,
   \begin{align*}
       \hat{\phi}(\Delta^{F}(\sigma))(X\ot_{B^{e}}Y)=& \lan F^{\alpha}| s\lan \sigma\o| s\lan F^{-\alpha}|X\o\ran X\t\ran X\th t\lan F_{\alpha}| s\lan \sigma\t| s\lan F_{-\alpha}|Y\o\ran Y\t\ran|Y\th\ran\ran\\
       =&\lan \sigma\o| s\lan F^{-\alpha}|X\o\ran X\t\ran \lan F^{\alpha}|X\th t\lan F_{\alpha}| s\lan \sigma\t| s\lan F_{-\alpha}|Y\o\ran Y\t\ran|Y\th\ran\ran\\
       =&\lan F^{-\alpha}|X\o t\lan F_{-\alpha}|Y\o\ran\ran \lan \sigma\o|X\t t\lan \sigma\t|Y\t\ran\ran \lan F^{\beta}|X\th t\lan F_{\beta}|Y\th\ran\ran\\
       =&\Gamma_{F}^{-1}(X\o, Y\o) \sigma(X\t Y\t)\Gamma_{F}(X\th, Y\th)\\
       =&\lan \sigma|X\cdot_{\Gamma^{-1}_{F}}Y\ran.
\end{align*}
\end{proof}
We now have a parallel to  Theorem~\ref{thm: boundary map}.
\begin{thm}\label{thm: boundary map1}
For any $U\in C^{1}(\cL)$, we can construct a 2-cocycle $\partial U\in Z^2(\CL)$ by
\begin{align}
    \partial U:=UU^{-1}\o\ot_{B}UU^{-1}\t,
\end{align}
where $U^{-1}$ is the inverse of $U$. Moreover, if $F\in Z^2(\CL)$ and $U\in C^{1}(\cL)$ then
\begin{align}   F_{U}=UF^{\alpha}U^{-1}\o\ot_{B}UF_{\alpha}U^{-1}\t
\end{align}
is another 2-cocycle in $Z^{2}(\cL)$ with  inverse given by
\begin{align}
    (F_{U})^{-1}=U\o F^{-\alpha}U^{-1}\ot_{B}U\t F_{-\alpha}U^{-1}.
\end{align}
This gives a group action of $C^1(\CL)$ on $Z^{2}(\cL)$ with the set of orbits denoted $\H^{2}(\cL)$.
\end{thm}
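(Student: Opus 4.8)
The plan is to recognise both constructions as a single \emph{gauge transformation} in the algebra $\cL\times_{B}\cL$ and to deduce everything from the fact that $\Delta$ corestricts to a coassociative algebra map into $\cL\times_{B}\cL$ (Definition~\ref{def:right.bgd}(i)). Writing $V:=U^{-1}$, the first observation is that since $U$ commutes with the images of $s$ and $t$, so does $V$, and hence the factorwise one-sided multiplications $Z\ot_{B}W\mapsto UZ\ot_{B}UW$ and $Z\ot_{B}W\mapsto ZV\ot_{B}WV$ are well defined on $\cL\ot_{B}\cL$ and preserve the Takeuchi subspace $\cL\times_{B}\cL$. This lets me read the defining formula as the product $F_{U}=(U\ot_{B}U)\,F\,\Delta(V)$ inside the algebra $\cL\times_{B}\cL$, so that $\partial U$ is exactly the special case $F=1_{\cL}\ot_{B}1_{\cL}$; I would therefore prove the statement for general $F_{U}$ and read off $\partial U$ at the end. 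The two identities driving all cancellations are $\Delta(V)\Delta(U)=\Delta(VU)=1_{\cL}\ot_{B}1_{\cL}=\Delta(U)\Delta(V)$ and $FF^{-1}=F^{-1}F=1_{\cL}\ot_{B}1_{\cL}$.

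First I would dispatch the structural conditions and invertibility. For condition (i) I use $s(b)\ot_{B}t(b')=\Delta(s(b)t(b'))$: this element commutes with $(U\ot_{B}U)$ factorwise (as $U$ commutes with $s,t$), with $\Delta(V)$ (since $Vs(b)t(b')=s(b)t(b')V$ and $\Delta$ is multiplicative), and with $F$ by its own condition (i); hence $F_{U}$ satisfies (i). For the counit condition (iii) I would establish the two contraction identities $(\varepsilon\ot_{B}\id)\big((U\ot_{B}U)G\big)=U\,(\varepsilon\ot_{B}\id)(G)$ and $(\varepsilon\ot_{B}\id)\big(G\,\Delta(V)\big)=(\varepsilon\ot_{B}\id)(G)\,V$, valid for $G\in\cL\times_{B}\cL$ and proved from $\varepsilon(XY)=\varepsilon(Xs(\varepsilon(Y)))$, $\varepsilon(s(b)X)=b\varepsilon(X)$, $\varepsilon(U)=1_{B}$ and the Takeuchi relation; applying them to $G=F$ collapses $(\varepsilon\ot_{B}\id)(F_{U})$ to $U\,(\varepsilon\ot_{B}\id)(F)\,V=UV=1_{\cL}$, and symmetrically for $\id\ot_{B}\varepsilon$. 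Invertibility is then immediate with $(F_{U})^{-1}=\Delta(U)\,F^{-1}\,(V\ot_{B}V)$, since $F_{U}(F_{U})^{-1}=(U\ot_{B}U)F\,\Delta(V)\Delta(U)\,F^{-1}(V\ot_{B}V)=(U\ot_{B}U)F F^{-1}(V\ot_{B}V)=1_{\cL}\ot_{B}1_{\cL}$, and likewise on the other side.

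The main computation is the cocycle condition (ii). Here I apply the coassociative algebra maps $\Delta\ot_{B}\id$ and $\id\ot_{B}\Delta$ (which corestrict to $\cL\times_{B}\cL\times_{B}\cL$) to $F_{U}$, using $(\Delta\ot_{B}\id)(U\ot_{B}U)=\Delta(U)\ot_{B}U$ and coassociativity on $\Delta(V)$. On the left-hand side the inner block $(V\o\ot_{B}V\t\ot_{B}1)(U\o\ot_{B}U\t\ot_{B}U)$ collapses to $1_{\cL}\ot_{B}1_{\cL}\ot_{B}U$ via $\Delta(V)\Delta(U)=1_{\cL}\ot_{B}1_{\cL}$ on the first two legs, leaving $(F\ot_{B}1)(\Delta\ot_{B}\id)(F)$ multiplied on the left by $U$ in every leg and on the right by the three legs of $(\Delta\ot_{B}\id)\Delta(V)$; the right-hand side reduces identically to the same outer dressing of $(1\ot_{B}F)(\id\ot_{B}\Delta)(F)$. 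These two agree precisely because $F$ satisfies (ii). This is the bialgebroid lift of the classical fact that a coboundary dressing of a Drinfeld cocycle is again a cocycle; the one genuine obstacle is bookkeeping, namely checking that each one-sided multiplication and each application of $\Delta\ot_{B}\id$, $\id\ot_{B}\Delta$ is well defined over the balanced tensor products, which is exactly where the hypotheses that $U,V$ commute with $s,t$ and that $F$ obeys (i) are used repeatedly.

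Finally, for the quotient $\H^{2}(\cL)$ I would check that $F\mapsto F_{U}$ is a left action of the group $C^{1}(\cL)$ on $Z^{2}(\cL)$: clearly $F_{1_{\cL}}=F$, and $(F_{U})_{U'}=(U'\ot_{B}U')(U\ot_{B}U)F\,\Delta(V)\Delta(V')=(U'U\ot_{B}U'U)F\,\Delta\big((U'U)^{-1}\big)=F_{U'U}$, using only multiplicativity and coassociativity of $\Delta$. Hence the orbits are the equivalence classes defining $\H^{2}(\cL)$, and $\partial U=(1_{\cL}\ot_{B}1_{\cL})_{U}$ exhibits the coboundaries as the orbit of the trivial cocycle. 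As a consistency check, in the left-finite case the whole statement can instead be transported from Theorem~\ref{thm: boundary map} along the dualities of Lemmas~\ref{lem. dual bisection} and~\ref{lem. dual 2-cocycle}, under which the element-valued data $C^{1}(\cL)$, $Z^{2}(\cL)$ and the present $\partial$ correspond to their map-valued counterparts on the dual bialgebroid; the direct argument above, however, requires no finiteness assumption.
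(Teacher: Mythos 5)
Your argument is correct, and it supplies in full a computation that the paper simply declares routine: the paper's own proof of this theorem is two sentences long ("It is not hard to see $\partial U$ and $F_{U}$ are well defined. The 2-cocycle condition is a straightforward calculation that we omit."). Your packaging $F_{U}=(U\ot_{B}U)\,F\,\Delta(U^{-1})$ as a product inside the associative algebra $\cL\times_{B}\cL$, with $\partial U$ the special case $F=1_{\cL}\ot_{B}1_{\cL}$, is exactly the right organising device: once one checks that $U\ot_{B}U$ and $U^{-1}\ot_{B}U^{-1}$ lie in $\cL\times_{B}\cL$ (immediate from $U$ commuting with the images of $s$ and $t$, which also settles all the well-definedness over $\ot_{B}$ since every step is then a multiplication in that algebra or its triple analogue), invertibility, condition (i), and the action law $(F_{U})_{U'}=F_{U'U}$ become one-line identities, and the cocycle condition reduces to the collapse $\Delta(U^{-1})\Delta(U)=1\ot_{B}1$ on the appropriate pair of legs followed by the cocycle identity for $F$ and coassociativity applied to $\Delta(U^{-1})$ — I verified this reduction and it closes. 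The one place where "bookkeeping" is genuinely nontrivial is your second counit contraction identity, $(\varepsilon\ot_{B}\id)\bigl(G\,\Delta(V)\bigr)=(\varepsilon\ot_{B}\id)(G)\,V$: a naive factorisation is not available because $\varepsilon(V\o)$ is entangled with $V\t$, and one needs $\varepsilon(XY)=\varepsilon(Xt(\varepsilon(Y)))$ together with the relation $t(\varepsilon(V\o))\ot_{B}V\t=1\ot_{B}V$ (and its mirror $V\o\ot_{B}s(\varepsilon(V\t))=V\ot_{B}1$ for the $\id\ot_{B}\varepsilon$ side); this works but deserves to be written out. Your closing consistency check is also accurate: the paper establishes the compatibility $\Gamma_{(F_{U})^{-1}}=(\Gamma_{F^{-1}})^{U}$ with Theorem~\ref{thm: boundary map} as a separate subsequent theorem in the left-finite case, whereas your direct argument needs no finiteness hypothesis.
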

\begin{proof}We omit the proof  that  $F_U$ obeys the 2-cocycle condition, but check  that
 $F_{U}$ is well defined (and hence also $\partial U$). Indeed, to show that different representatives of  $U^{-1}\o\ot_{B}U^{-1}\t=t(b)\,\tilde U^{-1}\o\ot_{B}\tilde U^{-1}\t= \tilde U^{-1}\o\ot_{B}s(b)\,\tilde U^{-1}\t$ give the same $F_U$, we check that
\begin{align*}
    UF^{\alpha}\,t(b)\,\tilde U^{-1}\o\ot_{B}UF_{\alpha}\tilde U^{-1}\t=&U\,t(b)\,F^{\alpha}\tilde U^{-1}\o\ot_{B}UF_{\alpha}\tilde U^{-1}\t=\,t(b)\,UF^{\alpha}\tilde U^{-1}\o\ot_{B}UF_{\alpha}\tilde U^{-1}\t\\
    =&UF^{\alpha}\tilde U^{-1}\o\ot_{B}s(b)\,UF_{\alpha}\tilde U^{-1}\t=UF^{\alpha}\tilde U^{-1}\o\ot_{B}U\,s(b)\,F_{\alpha}\tilde U^{-1}\t\\
    =&UF^{\alpha}\tilde U^{-1}\o\ot_{B}UF_{\alpha}\,s(b)\,\tilde U^{-1}\t,
\end{align*}
using that the images of $s,t$ commute with $U$. That the construction of $F_U$ also factors through the balanced tensor product of $F^\alpha\ot_{B} F_{\alpha}$ is similar. We also check that $F_U$ satisfies Definition \ref{def. dual 2-cocycle} (i). Applying $\Delta$ to both sides of  $s(b)U^{-1}=U^{-1}s(b)$, one can show that $s(b)\,U^{-1}\o\ot_{B}U^{-1}\t=U^{-1}\o\,s(b)\ot_{B}U^{-1}\t$. Hence,
\begin{align*}
    UF^{\alpha}U^{-1}\o\,s(b)\,\ot_{B}UF_{\alpha}U^{-1}\t=&UF^{\alpha}\,s(b)\,U^{-1}\o\ot_{B}UF_{\alpha}U^{-1}\t=\,s(b)\,UF^{\alpha}U^{-1}\o\ot_{B}UF_{\alpha}U^{-1}\t.
\end{align*}
By the same method, we also have $UF^{\alpha}U^{-1}\o\ot_{B}UF_{\alpha}U^{-1}\t\,t(b)\,=UF^{\alpha}U^{-1}\o\ot_{B}\,t(b)\,UF_{\alpha}U^{-1}\t$.
 Similarly for the formula for $(F_U)^{-1}$.
\end{proof}

The connection back to Theorem~\ref{thm: boundary map} is:
\begin{prop}
    Let $\cL$ be a left-finite left bialgebroid and $\Lambda=\cL^{\vee}$. Then $\mathcal{H}^2(\CL,B)\simeq\mathcal{H}^2(\Lambda)$.
\end{prop}
\begin{proof}
  We use Corollary~\ref{lem. dual bisection} and Lemma~\ref{lem. dual 2-cocycle}, and let $F\in Z^2(\Lambda)$,  $U\in C^{1}(\Lambda)=C^{1}(\cL, B)$. We  check that 
    \begin{align*}
        \Gamma_{(F_{U})^{-1}}(X\ot_{B^{e}}Y)=&\lan U\o F^{-\alpha} U^{-1}|X t\lan U\t F_{-\alpha}U^{-1}|Y\ran\ran\\
        =&U^{-1}(\one{X}t(U^{-1}(\one{Y})))\Gamma_{F^{-1}}(\two{X}, \two{Y})U(\three{X}\three{Y})=(\Gamma_{F^{-1}})^{U}(X\ot_{B^{e}}Y)
    \end{align*}
   for all $X,Y\in \CL$. 
\end{proof}

Finally, analogously to 2-cocycles in a bialgebra \cite{Ma:book}, if $F$ is a 2-cocycle then we also have the useful identities 
\begin{align}
    F^{\alpha}\o F^{-\beta}\ot_{B}F^{\alpha}\t F_{-\beta}\o \ot_{B} F_{\alpha}F_{-\beta}\t&=F^{-\beta}\ot_{B}F_{-\beta}F^{\alpha}\ot_{B}F_{\alpha},\\
    F^{\alpha}F^{-\beta}\o\ot_{B}F_{\alpha}\o F^{-\beta}\t\ot_{B}F_{\alpha}\t F_{\beta}&=F^{\alpha}\ot_{B} F^{-\beta}F_{\alpha}\ot_{B}F_{-\beta}.
\end{align}
We then have the following dual version of Theorem~\ref{thm. 2 cocycle twist1}, which in the left-finite or right-finite case would be equivalent by duality, but which is quite hard to prove in general as we do here.

\begin{thm}\label{thm. 2 cocycle twist}
Let $\cL$ be an anti-left Hopf  algebroid and $F\in Z^{2}(\cL)$. Then the left bialgebroid $\cL_{F}$ with twisted coproduct (\ref{twistcoprod}) is an anti-left Hopf  algebroid with
\begin{align*}
   (\mu_{F})^{-1}(Y&\ot_{B}X)
   =F_{\gamma}(F_{-\alpha}F^{\beta}X\o F^{-\delta} F^{\gamma})_{[-]}F^{-\alpha}Y\ot^{B^{op}}s(\varepsilon((F_{-\alpha}F^{\beta}X\o F^{-\delta} F^{\gamma})_{[+]}))F_{\beta}X\t F_{-\delta}.
\end{align*}
Similarly, if $\cL$ is a left Hopf algebroid and $F\in Z^{2}(\cL)$ then the left bialgebroid $\cL_{F}$ with twisted coproduct (\ref{twistcoprod}) is a left Hopf algebroid with
\begin{align*}
(\lambda_{F})^{-1}(X&\ot_{B}Y)
   =
    t(\varepsilon((F^{-\alpha}F_{\beta}X\t F_{-\delta}F_{\gamma})_{+}))F^{\beta}X\o F^{-\delta}\ot_{B^{op}}F^{\gamma}(F^{-\alpha}F_{\beta}X\t F_{-\delta}F_{\gamma})_{-}F_{-\alpha}Y.
\end{align*}
\end{thm}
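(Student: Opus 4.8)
The plan is to separate the two claims: that $\cL_F$ is a left bialgebroid, and that its canonical map is invertible with the displayed formula. For the first, the product, unit, source $s$, target $t$ and counit $\eps$ are all unchanged, so only the twisted coproduct $\Delta^F$ of (\ref{twistcoprod}) needs checking. Coassociativity of $\Delta^F$ is exactly the 2-cocycle condition (ii); counitality is condition (iii); and conditions (i),(iii) make $\Delta^F$ a counital $B$-bimodule map. Since $F,F^{-1}\in\cL\times_B\cL$, conjugation by $F$ preserves the Takeuchi product and is multiplicative, so $\Delta^F$ corestricts to an algebra map $\cL_F\to\cL_F\times_B\cL_F$. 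Thus $\cL_F$ is a left bialgebroid and the substance of the theorem is the invertibility of $\mu_F$ (resp. $\lambda_F$).

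First I would record the untwisted inverse in the form $\mu^{-1}(W\ot_B Z)=Z_{[-]}W\ot^{B^{op}}Z_{[+]}$, which follows from $\mu^{-1}(1\ot_B Z)=Z_{[-]}\ot^{B^{op}}Z_{[+]}$ together with (\ref{equ. inverse mu 1}) and the well-definedness of right multiplication of the first leg over $\ot_B$. The plan is then to verify the displayed formula for $(\mu_F)^{-1}$ by direct substitution, checking $\mu_F\circ(\mu_F)^{-1}=\id$ and $(\mu_F)^{-1}\circ\mu_F=\id$ separately. Here one uses $\Delta(s(b))=s(b)\ot_B 1$, the multiplicativity of the unchanged coproduct $\Delta$, and the identities (\ref{equ. inverse mu 5})--(\ref{equ. inverse mu 8}) to move and resolve the translation-map factors $W_{[-]},W_{[+]}$ attached to $W=F_{-\alpha}F^{\beta}\one{X}F^{-\delta}F^{\gamma}$.

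The crux is the handling of the four cocycle families. Substituting $\Delta^F(B)=F\Delta(B)F^{-1}$ into $\mu_F$ forces the coproducts $(\Delta\ot\id)(F)$ and $(\id\ot\Delta)(F)$ to appear alongside the bare factors $F,F^{-1}$, and the 2-cocycle condition (ii), read together with the relation obtained from it by inversion, is precisely the identity that collapses these into single copies of $F$ and $F^{-1}$. Only after this collapse do the paired factors telescope via $FF^{-1}=F^{-1}F=1\ot_B 1$ in $\cL\times_B\cL$, leaving an expression in which the mu-identities reduce $W_{[-]},W_{[+]}$ and the counit condition (iii) together with the counit axioms of $\eps$ remove the remaining $\varepsilon$-factors, returning $Y\ot_B X$. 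The left Hopf algebroid statement is proved in the same way with $\lambda$ in place of $\mu$, using the identities (\ref{equ. inverse lamda 1})--(\ref{equ. inverse lamda 8}) and (\ref{equ. source and target map with lambda inv 5}) in place of the mu-identities.

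I expect the main obstacle to be the simultaneous bookkeeping of the indices $\alpha,\beta,\gamma,\delta$ and the precise order in which condition (ii) must be applied relative to the translation maps: the cocycle identity has to be invoked on exactly the right grouping of $F$-legs before any telescoping is possible, and each intermediate expression must be checked to be well defined over the balanced tensor products $\ot_B$ and $\ot^{B^{op}}$, which rests throughout on the membership $F,F^{-1}\in\cL\times_B\cL$ and on condition (i) governing how $s$ and $t$ pass through the two legs of $F$. Verifying the two composites individually is unavoidable here, since the twisted coproduct is not the coproduct of any single element, so no shortcut through Theorem~\ref{thm. 2 cocycle twist1} or through the finite-dimensional duality of Theorem~\ref{thm. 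Hopf algebroids dual} is available in general.
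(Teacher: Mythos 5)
Your proposal is correct and follows essentially the same route as the paper: the bialgebroid structure of $\cL_F$ is taken from the cocycle axioms, and the stated formula for $(\mu_F)^{-1}$ (resp.\ $(\lambda_F)^{-1}$) is verified by computing both composites directly, using the cocycle condition to collapse the $(\Delta\ot\id)(F)$, $(\id\ot\Delta)(F)$ factors and the identities (\ref{equ. inverse mu 1})--(\ref{equ. inverse mu 12}) to resolve the translation maps. The paper additionally streamlines the check by reducing to elements of the form $1\ot_B X$ and $1\ot^{B^{op}}X$, but this is only a bookkeeping simplification of the same argument.
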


\begin{proof}
  One can check that  $\mu_{F}$ is well defined. To verify the inverse on one side,  it suffices to check that
    \begin{align*}
        (\mu_{F}\circ \mu_{F}^{-1})&(1\ot_{B}X)\\
        =&s\circ \varepsilon((F_{-\alpha}F^{\beta}X\o F^{-\delta}F^{\gamma})_{[+]})F^{\rho}F_{\beta}\o X\t F_{-\delta}\o F^{-\lambda}F_{\gamma}(F_{-\alpha}F^{\beta}X\o F^{-\delta}F^{\gamma})_{[-]}F^{-\alpha}\\
        &\quad \ot_{B}F_{\rho}F_{\beta}\t X\th F_{-\delta}\t F_{-\lambda}\\
         =&s\circ \varepsilon((F_{-\alpha}F^{\rho}F^{\beta}\o X\o F^{-\delta}\o F^{-\lambda}F^{\gamma})_{[+]})F_{\rho}F^{\beta}\t X\t F^{-\delta}\t F_{-\lambda}F_{\gamma}\\
        &\quad (F_{-\alpha}F^{\rho}F^{\beta}\o X\o F^{-\delta}\o F^{-\lambda}F^{\gamma})_{[-]}F^{-\alpha}\ot_{B}F_{\beta} X\th F_{-\delta}\\
        =&s\circ \varepsilon((F_{-\alpha}F^{\rho}F^{\beta}\o X\o F^{-\delta}\o )_{[+]})F_{\rho}F^{\beta}\t X\t F^{-\delta}\t \\
        &\quad (F_{-\alpha}F^{\rho}F^{\beta}\o X\o F^{-\delta}\o )_{[-]}F^{-\alpha}\ot_{B}F_{\beta} X\th F_{-\delta}\\
        =&s\circ \varepsilon((F^{\alpha}\t F_{-\rho}\o F^{\beta}\o X\o F^{-\delta}\o )_{[+]})F_{\alpha}F_{-\rho}\t F^{\beta}\t X\t F^{-\delta}\t \\
        &\quad (F^{\alpha}\t F_{-\rho}\o F^{\beta}\o X\o F^{-\delta}\o )_{[-]}F^{\alpha}\o F^{-\rho}\ot_{B}F_{\beta} X\th F_{-\delta}\\
        =&s\circ \varepsilon(F^{\alpha}\t{}_{[+]} (F_{-\rho}\o F^{\beta}\o X\o F^{-\delta}\o )_{[+]})F_{\alpha}F_{-\rho}\t F^{\beta}\t X\t F^{-\delta}\t \\
        &\quad ( F_{-\rho}\o F^{\beta}\o X\o F^{-\delta}\o )_{[-]}F^{\alpha}\t{}_{[-]}F^{\alpha}\o F^{-\rho}\ot_{B}F_{\beta} X\th F_{-\delta}\\
        =&s\circ \varepsilon(F^{\alpha}(F_{-\rho}\o F^{\beta}\o X\o F^{-\delta}\o )_{[+]})F_{\alpha}F_{-\rho}\t F^{\beta}\t X\t F^{-\delta}\t \\
        &\quad ( F_{-\rho}\o F^{\beta}\o X\o F^{-\delta}\o )_{[-]} F^{-\rho}\ot_{B}F_{\beta} X\th F_{-\delta}\\
        =&s\circ \varepsilon(F^{\alpha}(F_{-\rho}\o F^{\beta}\o )_{[+]} X\o{}_{[+]} t\circ\varepsilon(F^{-\delta}\o {}_{[+]}))F_{\alpha}F_{-\rho}\t F^{\beta}\t X\t F^{-\delta}\t  F^{-\delta}\o {}_{[-]}X\o{}_{[-]}\\
        &\quad ( F_{-\rho}\o F^{\beta}\o   )_{[-]} F^{-\rho}\ot_{B}F_{\beta} X\th F_{-\delta}\\
        =&s\circ \varepsilon(F^{\alpha}(F_{-\rho}\o F^{\beta}\o )_{[+]} \big(X\o{} t\circ\varepsilon(F^{-\delta}\o {}_{[+]})\big)_{[+]})F_{\alpha}F_{-\rho}\t F^{\beta}\t X\t F^{-\delta}\t  F^{-\delta}\o {}_{[-]}\\
        &\quad\big(X\o{} t\circ\varepsilon(F^{-\delta}\o {}_{[+]})\big)_{[-]} ( F_{-\rho}\o F^{\beta}\o   )_{[-]} F^{-\rho}\ot_{B}F_{\beta} X\th F_{-\delta}\\
        =&s\circ \varepsilon(F^{\alpha}(F_{-\rho}\o F^{\beta}\o )_{[+]} X\o{}_{[+]} )F_{\alpha}F_{-\rho}\t F^{\beta}\t X\t s\circ\varepsilon(F^{-\delta}\o {}_{[+]})F^{-\delta}\t  F^{-\delta}\o {}_{[-]}X\o{}_{[-]}\\
        &\quad ( F_{-\rho}\o F^{\beta}\o   )_{[-]} F^{-\rho}\ot_{B}F_{\beta} X\th F_{-\delta}\\
        =&s\circ \varepsilon(F^{\alpha}(F_{-\rho}\o F^{\beta}\o )_{[+]} X\o{}_{[+]} )F_{\alpha}F_{-\rho}\t F^{\beta}\t X\t s\circ\varepsilon(F^{-\delta}{}_{[+]}\o )F^{-\delta}{}_{[+]}\t  F^{-\delta}{}_{[-]}X\o{}_{[-]}\\
        &\quad ( F_{-\rho}\o F^{\beta}\o   )_{[-]} F^{-\rho}\ot_{B}F_{\beta} X\th F_{-\delta}\\
        =&s\circ \varepsilon(F^{\alpha}(F_{-\rho}\o F^{\beta}\o )_{[+]} X\o{}_{[+]} )F_{\alpha}F_{-\rho}\t F^{\beta}\t X\t X\o{}_{[-]}\\
        &\quad ( F_{-\rho}\o F^{\beta}\o   )_{[-]} F^{-\rho}\ot_{B}F_{\beta} X\th \\
        =&s\circ \varepsilon(F^{\alpha}F_{-\rho}\o{}_{[+]} F^{\beta}\o {}_{[+]} t\circ\varepsilon(X\o{}_{[+]} ))F_{\alpha}F_{-\rho}\t F^{\beta}\t X\t X\o{}_{[-]}\\
        &\quad F^{\beta}\o   {}_{[-]} F_{-\rho}\o{}_{[-]}  F^{-\rho}\ot_{B}F_{\beta} X\th \\
        =&s\circ \varepsilon(F^{\alpha}F_{-\rho}\o{}_{[+]} F^{\beta}\o {}_{[+]} )F_{\alpha}F_{-\rho}\t F^{\beta}\t t\circ\varepsilon(X\o) F^{\beta}\o   {}_{[-]} F_{-\rho}\o{}_{[-]} F^{-\rho}\ot_{B}F_{\beta} X\t \\
        =&s\circ \varepsilon(F^{\alpha}F_{-\rho}\o{}_{[+]} F^{\beta}\o {}_{[+]} )F_{\alpha}F_{-\rho}\t F^{\beta}\t  F^{\beta}\o   {}_{[-]} F_{-\rho}\o{}_{[-]} F^{-\rho}\ot_{B}F_{\beta} s\circ\varepsilon(X\o)X\t \\
         =&s\circ \varepsilon(F^{\alpha}F_{-\rho}\o{}_{[+]}  )F_{\alpha}F_{-\rho}\t  F_{-\rho}\o{}_{[-]} F^{-\rho}\ot_{B} X F_{-\delta}\\
         =&s\circ \varepsilon(F^{\alpha} )F_{\alpha}  \ot_{B} X =1\ot_{B}X
    \end{align*}
for all $X\in \CL$. Similarly on the other side,
\begin{align*}
    (\mu_{F}^{-1}\circ \mu_{F})&
    (1\ot^{B^{op}}X)\\
    =&F_{\alpha}(F_{-\beta}F^{\lambda}F^{\gamma}\o X\t F_{-\delta}\o F^{-\rho}F^{\alpha})_{[-]}F^{-\beta}F^{\gamma}X\o F^{-\delta}\\
    &\quad \ot^{B^{op}} s\circ\varepsilon((F_{-\beta}F^{\lambda}F_{\gamma}\o X\t F_{-\delta}\o F^{-\rho}F^{\alpha})_{[+]})F_{\lambda}F_{\gamma}\t X\th F_{-\delta}\t F_{-\rho}\\
    =&F_{\alpha}(F_{-\beta}F_{\lambda}F^{\gamma}\t X\t F^{-\delta}\t F_{-\rho}F^{\alpha})_{[-]}F^{-\beta}F^{\lambda}F^{\gamma}\o X\o F^{-\delta}\o F^{-\rho}\\
    &\quad \ot^{B^{op}} s\circ\varepsilon((F_{-\beta}F_{\lambda}F^{\gamma}\t X\t F^{-\delta}\t F_{-\rho}F^{\alpha})_{[+]})F_{\gamma} X\th F_{-\delta}\\
    =&F_{\alpha}(F^{\gamma}\t X\t F^{-\delta}\t F_{-\rho}F^{\alpha})_{[-]}F^{\gamma}\o X\o F^{-\delta}\o F^{-\rho}\\
    &\quad \ot^{B^{op}} s\circ\varepsilon((F^{\gamma}\t X\t F^{-\delta}\t F_{-\rho}F^{\alpha})_{[+]})F_{\gamma} X\th F_{-\delta}\\
    =&F_{\alpha}( X\t F^{-\delta}\t F_{-\rho}F^{\alpha})_{[-]} X\o F^{-\delta}\o F^{-\rho}\\
    &\quad \ot^{B^{op}} s\circ\varepsilon((X\t F^{-\delta}\t F_{-\rho}F^{\alpha})_{[+]}) X\th F_{-\delta}\\
    =&F_{\alpha}(  F^{-\delta}\t F_{-\rho}F^{\alpha})_{[-]}  F^{-\delta}\o F^{-\rho} \ot^{B^{op}} s\circ\varepsilon(X\o (F^{-\delta}\t F_{-\rho}F^{\alpha})_{[+]}) X\t F_{-\delta}\\
    =&F_{\alpha}(   F_{-\rho}F^{\alpha})_{[-]}   F^{-\rho} \ot^{B^{op}} s\circ\varepsilon(X\o F^{-\delta} (F_{-\rho}F^{\alpha})_{[+]}) X\t F_{-\delta}\\
    =&F_{\rho}F_{-\alpha}\t(   F^{\rho}\t F_{-\alpha}\o)_{[-]}   F^{\rho}\o F^{-\alpha} \ot^{B^{op}} s\circ\varepsilon(X\o F^{-\delta} (F^{\rho}\t F_{-\alpha}\o)_{[+]}) X\t F_{-\delta}\\
    =&F_{\rho}F_{-\alpha}\t   F_{-\alpha}\o{}_{[-]}    F^{-\alpha} \ot^{B^{op}} s\circ\varepsilon(X\o F^{-\delta} F^{\rho} (F_{-\alpha}\o)_{[+]}) X\t F_{-\delta}\\
    =&F_{\rho}F_{-\alpha}\t   F_{-\alpha}\o{}_{[-]}    F^{-\alpha} \ot^{B^{op}} s\circ\varepsilon(X\o F^{-\delta} F^{\rho} F_{-\alpha}\o{}_{[+]}) X\t F_{-\delta}\\
    =&F_{\rho}F_{-\alpha}{}_{[+]}\t   F_{-\alpha}{}_{[-]}    F^{-\alpha} \ot^{B^{op}} s\circ\varepsilon(X\o F^{-\delta} F^{\rho} F_{-\alpha}{}_{[+]}\o) X\t F_{-\delta}\\
    =&F_{\rho}s(\varepsilon(F_{-\alpha}{}_{[+]}\o))F_{-\alpha}{}_{[+]}\t   F_{-\alpha}{}_{[-]}    F^{-\alpha} \ot^{B^{op}} s\circ\varepsilon(X\o F^{-\delta} F^{\rho} ) X\t F_{-\delta}\\
    =&F_{\rho}\ot^{B^{op}} s\circ\varepsilon(X\o F^{-\delta} F^{\rho} ) X\t F_{-\delta} =1\ot^{B^{op}}X
\end{align*}
for all $X\in\CL$. \end{proof}

\subsection*{Acknowledgements} We are particularly grateful to one of the referees for many detailed comments. We also thank G. Landi for helpful discussions relating to Section~\ref{secbisec}, which indeed builds on the joint work \cite{HL20}. XH was supported by Marie Curie Fellowship HADG - 101027463 agreed between QMUL  and the  European Commission.

\medskip\noindent{\bf Data availability}  Data sharing is not applicable as no data sets were generated or analysed during the current
 study.

\section*{Declarations}

{\bf Conflict of interest} On behalf of all authors, the corresponding author states that there is no conflict of interests.

\renewcommand\refname{\bf References}

\end{document}